\renewcommand{\subset}{\subseteq}
\newcommand{\Tr}{\mathrm{Tr}}
\newcommand{\End}{\mathrm{End}}
\newcommand{\rk}{\mathrm{rk}}
\newcommand{\inv}{{}^{-1}}
\newcommand{\coker}{\mathrm{coker}}
\newcommand{\mc}{\mathcal}
\newcommand{\mf}{\mathfrak}
\newcommand{\mb}{\mathbb}
\newcommand{\linfty}{L_\infty}
\newcommand{\Addresses}{{
\bigskip
\footnotesize

\textsc{KU Leuven, Department of Mathematics, Celestijnenlaan 200B box 2400, 3001 Leuven, Belgium} \par \nopagebreak
\textit{E-mail address}:\href{mailto:karandeep.singh@kuleuven.be}{karandeep.singh@kuleuven.be}
}}
\newcommand{\opnorm}{\@ifstar\@opnorms\@opnorm}
\pgfplotsset{compat=1.10}
\newcommand{\Hom}{\mathrm{Hom}}
\theoremstyle{plain}
\newtheorem{thm}{Theorem}[section]
\newtheorem*{thm*}{Theorem}
\newtheorem{lem}[thm]{Lemma}
\newtheorem{prop}[thm]{Proposition}
\newtheorem{cor}[thm]{Corollary}
\theoremstyle{definition}
\newtheorem{defn}[thm]{Definition} 
\newtheorem{exmp}[thm]{Example}
\newtheorem{rmk}[thm]{Remark}
\newtheorem*{conv*}{Convention}
\newtheorem*{prob*}{Problem}
\title{On the universal $L_\infty$-algebroid of linear foliations}
\author{Karandeep Singh}
\date{}
\begin{document}
\maketitle

\abstract{We compute an $L_\infty$-algebroid structure on a projective resolution of some classes of singular foliations on a vector space $V$ induced by the linear action of some Lie subalgebra of $\mf {gl}(V)$. This $L_\infty$-algebroid provides invariants of the singular foliations, and also provides a constant-rank replacement of the singular foliation. We do this by first explicitly constructing projective resolutions of the singular foliations induced by the natural linear actions of endomorphisms of $V$ preserving a subspace $W\subset V$, the Lie algebra of traceless endomorphisms, and the symplectic Lie algebra of endomorphisms of $V$ preserving a non-degenerate skew-symmetric bilinear form $\omega$, and then computing the $L_\infty$-algebroid structure. We then generalize these constructions to a vector bundle $E$, where the role of the origin is now taken by the zero section $L$.\\

We then show that the fibers over a singular point of a projective resolution of any singular foliation can be computed directly from the foliation, without needing the projective resolution. For linear foliations, we also provide a way to compute the action of the isotropy Lie algebra in the origin on these fibers, without needing the projective resolution.}
\tableofcontents

\section{Introduction}\label{a}
Let $M$ be a smooth manifold, equipped with a singular foliation $\mc F$. By singular foliation, we mean a subsheaf $\mc F$ of the sheaf of vector fields $\mf X$ on $M$ such that
\begin{itemize}
    \item[a)] for all $U\subset M$ open, $\mc F(U)$ is a $C^\infty(U)$-submodule of $\mf X(U)$,
    \item[b)] for all $U\subset M$ open and $X,Y \in \mc F(U)$ we have $[X,Y] \in \mc F(U)$,
    \item[c)] for all $x \in M$, there exists an open subset $U_x$ of $M$ containing $x$, such that $\mc F(U_x)$ is a finitely generated $C^\infty(U_x)$-module.
\end{itemize}
This definition of singular foliations was used in \cite{lavau2016,univlinfty}. An equivalent definition, using compactly supported vector fields, appeared in \cite{sussmann,holgpd} among other places. This equivalence was shown in \cite[Proposition 2.1.9]{wang}, and the construction of the sheaf out of compactly supported vector fields appeared in \cite{sheafcomp}. \\
In \cite{univlinfty}, it was shown that under certain conditions on $\mc F$ one can associate an $L_\infty$-algebroid over $M$ to $(M,\mc F)$. Here an $L_\infty$-algebroid is a non-positively graded vector bundle $E =\bigoplus_{i \in \mb Z_{\leq 0}} E_i $, with a collection of multibrackets $\{\ell_k:\Gamma(\wedge^k E)\to \Gamma(E)\}_{k\geq 1}$ where $\ell_k$ has degree $2-k$, and a vector bundle map $\rho:E_0 \to TM$ intertwining $\ell_2$ with the Lie bracket of vector fields called the anchor, satisfying some quadratic identities. $L_\infty$-algebroids were first defined in \cite{voronov} as higher analogues of Lie algebroids. When $M = \{\ast\}$ is a single point, the definition reduces to that of a non-positively graded $L_\infty$-algebra, which appeared in \cite{shlie} as strongly homotopy Lie algebra.  For the definition and important properties, we refer to \cite[Section 2.1]{modclasslavau}. 

The construction of \cite{univlinfty} can be broken into two parts:
\begin{itemize}
    \item [i)] Choosing a resolution of $\mc F$ in the category of $C^\infty_M$-modules by finitely generated projective modules\footnote{As we work in the smooth category, this is equivalent to choosing a resolution of $\mc F(M)$ in the category of $C^\infty(M)$-modules by sections of vector bundles. We therefore do not distinguish between the sheaf and its global sections.},
    \item [ii)] Constructing an $L_\infty$-algebroid structure on the complex given by the resolution.
\end{itemize}

In step i) the conditions posed on $\mc F$ are used. Neither of the steps is constructive, but plenty of examples are given. Because of i), the $L_\infty$-algebroid constructed in ii) satisfies a universality property, which implies uniqueness up to a notion of homotopy (\cite[Corollary 2.9]{univlinfty}). It will therefore be referred to as a \emph{universal} $L_\infty$-algebroid of $\mc F$. Because of the uniqueness up to homotopy, this $L_\infty$-algebroid captures invariants of the singular foliation $\mc F$. Moreover, it allows to replace the singular foliation by a collection of constant-rank objects, which provides a framework to extend some results from the theory of Lie algebroids to singular foliations. Further, knowing a universal $L_\infty$-algebroid of a singular foliation allows to compute the modular class of a singular foliation as in \cite{modclasslavau}.\\

In this article, we generalize the example of vector fields vanishing in the origin given in \cite{univlinfty}. This foliation is induced by the canonical linear $\mf {gl}(V)$-action on $V$. The universal $L_\infty$-algebroid given in \cite[Example 3.99]{univlinfty} only has two nonzero operations, turning it into a differential graded Lie algebroid (dg-Lie algebroid): $\ell_k = 0$ for $k\geq 3$.
This raises several questions:
\begin{itemize}
    \item[1)] Can we construct the universal $L_\infty$-algebroids for linear actions of other Lie algebras explicitly?
    \item[2)] Can this approach be generalized to higher-dimensional leaves, with the corresponding isotropy Lie algebra?
    \item[3)] Does the universal $L_\infty$-algebroid for such a foliation always admit a dg-Lie algebroid structure (i.e. an $L_\infty$-algebroid structure for which only the unary and binary brackets are non-zero)?
\end{itemize}
\subsection*{Main results}
We address these questions in the following examples: 
\begin{itemize}
\item[-] The Lie subalgebra $\mf {gl}(V,W)\subset \mf {gl}(V)$ for a given subspace $W\subset V$,
\item[-] the Lie subalgebra $\mf {sl}(V)\subset \mf {gl}(V)$ of traceless endomorphisms
\item[-] the Lie subalgebra $\mf {sp}(V,\omega)\subset \mf {gl}(V)$ of endomorphisms preserving a non-degenerate skew-symmetric 2-form $\omega \in \wedge^2 V^\ast$.
\end{itemize}
All three questions above have a positive answer in the cases $\mf {gl}(V,W)$ and $\mf {sl}(V)$. We answer questions 1) and 2) partially in the case of $\mf {sp}(V,\omega)$, and we do not know the answer to question 3) in this case.\\
The resolutions of the module $\mc F$ we construct are \emph{minimal} at the origin, which means that all differentials, being vector bundle maps, vanish at the origin. An advantage of this is that two $L_\infty$-algebroid structures constructed on minimal resolutions are not only homotopy equivalent, but actually $L_\infty$-isomorphic in a neighborhood of the origin, as explained at the end of section \ref{sec:vf00}. \\
In section \ref{sec:0dimleaf} we address questions 1) and 3).
\begin{itemize}
    \item[-] In section \ref{sec:vf00} we recall the construction for $\mf {gl}(V)$, as given in \cite{univlinfty}.
    \item[-] In section \ref{sec:subsp} we consider the case of $\mf {gl}(V,W)$, which induces the foliation generated by the linear vector fields tangent to the subspace $W$. We give a geometric resolution and describe an $L_\infty$-algebroid structure with only a unary and binary bracket in Proposition \ref{prop:subspres} yielding a positive answer to question 3).
    \item[-] In section \ref{sec:vol0} we consider the case of $\mf {sl}(V)$, which induces the foliation generated by linear vector fields preserving a constant volume form on $V$. We compute a geometric resolution in Proposition \ref{prop:ressl}, and describe an $L_\infty$-algebroid structure with only a unary and binary bracket in Proposition \ref{prop:slbracket} yielding a positive answer to question 3).
    \item[-] In section \ref{sec:linsymp} we fix a non-degenerate element $\omega\in \wedge^2 V^\ast$ and consider the case of $\mf {sp}(V,\omega)$. We compute the geometric resolution in Proposition \ref{prop:spres}, and give a binary bracket in Proposition \ref{prop:spbracket} depending on a map $r^\omega$ we chose. We show that this bracket does not satisfy the Jacobi identity, and give an expression for the ternary bracket. In the appendix \ref{appendix1} we investigate if the binary brackets can be simplified by picking $r^\omega$ to be a cochain map in some degrees, and show that this cannot be done when $V$ is 4-dimensional. The answer to question 3) remains inconclusive in this case.
\end{itemize}
In section \ref{sec:hdleaf} we the higher-dimensional analogues of the abovementioned cases and address the corresponding questions 2) and 3). In each of the cases the results of the earlier sections generalize.
\begin{itemize}
    \item[-] In section \ref{sec:zerosec} we consider the foliation of vector fields on a vector bundle $E$ which are tangent to the zero section. We compute the geometric resolution in Proposition \ref{prop:resgle}, and describe an $L_\infty$-algebroid structure in Proposition \ref{prop:glebracket}.
    \item[-] In section \ref{sec:subbundle} we consider the foliation of vector fields on a vector bundle which are tangent to a vector subbundle, of which the zero section is a special case. The geometric resolution and $L_\infty$-algebroid structure are given in Proposition \ref{prop:subbpres}.
    \item[-] In section \ref{sec:vol1} we consider the foliation the foliation on an orientable vector bundle $E \to L$, with non-vanishing section $\mu\in \Gamma(\wedge^n E)$, where $n = \rk (E)$, generated by the linear vector fields which preserve $\mu$. We give the geometric resolution in proposition \ref{prop:sleres}, and the $L_\infty$-algebroid structure in \ref{prop:slebracket}.
    \item[-] In section \ref{sec:sympleaf} we consider the foliation on a vector bundle $E\to L$ generated by the linear vector fields which preserve a non-degenerate $\omega\in \Gamma(\wedge^2 E)$. The projective resolution is given in \ref{prop:resspe}, and a binary bracket of the $L_\infty$-algebroid structure in Proposition \ref{prop:spebracket}.
\end{itemize}
Finally, in section \ref{sec:generalfol} we consider a general foliation $\mc F$ on a vector space $V$, for which the origin $p$ is a singular point. We show that the fibers over $p$ of any geometric resolution which is minimal at the origin can be computed directly from $\mc F$, \emph{without needing to find a geometric resolution} (Proposition \ref{prop:toriso}). In the case that $\mc F$ is linear, we additionally show that part of the structure of the isotropy $L_\infty$-algebra (see \cite[Section 4.2]{univlinfty}), which is an invariant of the foliation $\mc F$, can be recovered from the foliation directly (Proposition \ref{prop:repisoliealg}).
\paragraph{\bf Acknowledgements} 
We thank Marco Zambon for fruitful discussions and helpful comments. We thank Wouter Castryck and Robin van der Veer for helpful discussions. We thank Sylvain Lavau for providing useful comments. We acknowledge the FWO and FNRS under EOS projects G0H4518N and G0I2222N. 
\section{Zero-dimensional leaves}\label{sec:0dimleaf}
In this section, we compute a universal $L_\infty$-algebroid for some classes of singular foliations generated by some Lie subalgebra of the Lie algebra of linear vector fields on a vector space $V$, addressing questions 1) and 3) from Section \ref{a}.\\

In section \ref{sec:vf00}, we recall Example 3.99 of \cite{univlinfty}, dealing with the foliation of vector fields vanishing at the origin $p\in V$, which is induced by the standard $\mf{gl}(V)$-action.\\

In section \ref{sec:subsp}, we pick a subspace $W\subset V$, and consider the foliation generated by all linear vector fields tangent to $W$, which is the foliation induced by the action of endomorphisms of $V$ which preserve $W$.\\

In section \ref{sec:vol0}, we consider the foliation generated by linear vector fields preserving any constant volume form on $V$, yielding the foliation generated by the standard $\mf{sl}(V)$-action.\\

In section \ref{sec:linsymp}, we consider a non-degenerate element $\omega \in \wedge^2 V^\ast$, and consider the foliation generated by linear vector fields preserving $\omega$, which is the foliation induced by the standard $\mf{sp}(V,\omega)$-action.
\begin{conv*}
Throughout this section, for $W$ a finite-dimensional real vector space, we will consider trivial vector bundles $W\times V$ over a finite-dimensional real vector space $V$. Its global sections will be denoted by $\Gamma(W)$.\\
Unless stated otherwise, repeated indices will be summed over.
\end{conv*}
\subsection{Vector fields vanishing at the origin}\label{sec:vf00}
In this section we recall Example 3.99 of \cite{univlinfty}. Let $V$ be a real vector space of dimension $n\geq 0$, and let 
\begin{equation}\label{eq:0inorigin}
\mc F_0(V) = \{ X \in \mf X(V) \mid X(0) = 0\}
\end{equation}
be the submodule of vector fields on $V$ given by the vector fields vanishing in the origin. It is easy to see that it is a singular foliation.
A resolution of $\mc F_0$ can be constructed using the following lemma.
\begin{lem}\label{lem:koszulexact}
The complexes
\begin{equation} 
\begin{tikzcd}\label{eq:koszexact1}
0\arrow{r}& \Gamma(\wedge^n V^\ast) \arrow{r}{d_{n}} & \Gamma(\wedge^{n-1}V^\ast) \arrow{r}{d_{n-1}} & \dots \arrow{r}{d_{2}} & \Gamma(V^\ast) \arrow{r}{\rho } & I_q \arrow{r}& 0
\end{tikzcd},
\end{equation}
\begin{equation}
\begin{tikzcd}\label{eq:koszexact2}
0\arrow{r}& \Gamma(\wedge^n V^\ast) \arrow{r}{d_{n}} & \Gamma(\wedge^{n-1}V^\ast) \arrow{r}{d_{n-1}} & \dots \arrow{r}{d_{2}} & \Gamma(V^\ast) \arrow{r}{d_1} & C^\infty(V) \arrow{r}{\text{ev}_q}& \mb R\arrow{r}& 0
\end{tikzcd}
\end{equation}
are exact. Here for $k = 1,\dots, n$, $d_k:\Gamma(\wedge^k V^\ast) \to \Gamma(\wedge^{k-1} V^\ast)$ and $\rho:\Gamma(V^\ast) \to I_q$ are the contraction with the Euler vector field $x^i\partial_{x^i}$, $I_q$ is the ideal of functions vanishing at the origin $q\in V$ and $\text{ev}_q$ is the evaluation of a function at $q = 0$.
In particular, the complexes remain exact when applying the functor $-\otimes_{C^\infty(V)} \Gamma(W)$ for some vector bundle $W\times V\to V$.
\end{lem}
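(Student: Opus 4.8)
The plan is to identify both complexes with the Koszul complex of the coordinate functions on $V$ and then invoke the classical fact that the Koszul complex of a regular sequence is acyclic. First I would fix coordinates $x^1,\dots,x^n$ on $V$ with dual basis $e^1,\dots,e^n$ of $V^\ast$, so that $\Gamma(\wedge^k V^\ast)$ is the free $C^\infty(V)$-module on the $e^{i_1}\wedge\dots\wedge e^{i_k}$; under this identification $d_k=\iota_{x^i\partial_{x^i}}$ is exactly the Koszul differential of the sequence $(x^1,\dots,x^n)$ in $R:=C^\infty(V)$, the map $\mathrm{ev}_q\colon R\to\R$ is the quotient $R\to R/(x^1,\dots,x^n)$, and $\rho$ is $d_1$ with codomain restricted to its image $I_q$. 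Thus \eqref{eq:koszexact2} is the augmented Koszul complex $K_\bullet(x^1,\dots,x^n;R)\to R/(x^1,\dots,x^n)$, and \eqref{eq:koszexact1} is its truncation one step earlier.

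The key step is then to show $(x^1,\dots,x^n)$ is a regular sequence in $R$. The single input from the smooth setting is Hadamard's lemma: a smooth function vanishing on $\{x^1=\dots=x^k=0\}$ lies in $(x^1,\dots,x^k)$, since $f=\sum_{i\le k}x^i\int_0^1(\partial_i f)(tx^1,\dots,tx^k,x^{k+1},\dots,x^n)\,dt$. Hence restriction yields $R/(x^1,\dots,x^k)\cong C^\infty(\R^{n-k})$, on which $x^{k+1}$ is a non-zero-divisor (if $x^{k+1}g\equiv 0$ then $g$ vanishes off a hyperplane, hence everywhere), so the sequence is regular; the case $k=n$ gives $R/(x^1,\dots,x^n)\cong\R$ and $I_q=(x^1,\dots,x^n)=\im d_1$. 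By the standard theorem — provable by induction on $n$ from $K_\bullet(x^1,\dots,x^n)=K_\bullet(x^1,\dots,x^{n-1})\otimes_R K_\bullet(x^n)$ and the associated long exact homology sequence — the Koszul complex of a regular sequence resolves $R/(x^1,\dots,x^n)$, which is precisely the exactness of \eqref{eq:koszexact2}. Exactness of \eqref{eq:koszexact1} follows: $\rho$ is onto $I_q=\im d_1$, $\ker\rho=\ker d_1=\im d_2$, and all higher terms agree with those of \eqref{eq:koszexact2}.

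For the final assertion, if $W\times V\to V$ is trivial then $\Gamma(W)\cong C^\infty(V)\otimes_\R W$ is free, hence flat, over $R$, so $-\otimes_R\Gamma(W)$ is exact and sends the exact complexes \eqref{eq:koszexact1} and \eqref{eq:koszexact2} to exact complexes.

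I expect the only genuinely delicate point to be that the base ring $C^\infty(V)$ is non-Noetherian and not a polynomial ring, so there is no Nullstellensatz, no weight grading, and no global smooth contracting homotopy (the naive radial homotopy is singular at the origin). What rescues the argument is that Hadamard's lemma still identifies each successive quotient $R/(x^1,\dots,x^k)$ with a genuine smooth-function ring, which is exactly what the Koszul-acyclicity theorem requires; alternatively one could construct a contracting homotopy degree by degree using the same Hadamard decomposition.
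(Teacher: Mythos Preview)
The paper states this lemma without proof, treating it as a standard fact about the Koszul resolution in the smooth category. Your proposal is correct and supplies precisely the argument one would want: identify \eqref{eq:koszexact2} with the augmented Koszul complex of $(x^1,\dots,x^n)$ over $R=C^\infty(V)$, use Hadamard's lemma to show $R/(x^1,\dots,x^k)\cong C^\infty(\R^{n-k})$ so that the sequence is regular, and invoke the acyclicity theorem for the Koszul complex of a regular sequence. The deduction of \eqref{eq:koszexact1} from \eqref{eq:koszexact2} and the freeness argument for tensoring with $\Gamma(W)$ are both fine.

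One small remark on your closing paragraph: your worry about the absence of a global contracting homotopy is not quite accurate. A contracting homotopy \emph{does} exist and can be written down directly from Hadamard's lemma---for $\alpha\in\Gamma(\wedge^k V^\ast)$ one sets $h(\alpha)=e^i\wedge\int_0^1 t^{k}(\partial_{x^i}\alpha)(tx)\,dt$, which is smooth everywhere including at the origin and satisfies $d\,h+h\,d=\mathrm{id}$ on $\ker(\mathrm{ev}_q)$. This is essentially the radial homotopy applied to the Taylor-integral form of the coefficients, and it gives an alternative one-line proof bypassing the regular-sequence machinery. Either route is perfectly acceptable here.
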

Taking $W = V$ and tensoring $\eqref{eq:koszexact1}$ with $\Gamma(V)\cong \mf X(V)$ we obtain the exact sequence
\begin{equation}\label{eq:resgln}
\begin{tikzcd}
0\arrow{r}& \Gamma(\wedge^n V^\ast\otimes V) \arrow{r}{d_{n}} & \Gamma(\wedge^{n-1}V^\ast\otimes V) \arrow{r}{d_{n-1}} & \dots \arrow{r}{d_{2}} & \Gamma(V^\ast\otimes V) \arrow{r}{\rho} & I_p\mf X(V) = \mc F_0(V) \arrow{r}& 0.
\end{tikzcd}
\end{equation}
Here, and in the rest of this article we use the convention that $\Gamma(V^\ast \otimes V)$ sits in degree $0$, and the differential $d_\bullet$ has degree $1$.

An $L_\infty$-algebroid structure on $\Gamma(\wedge^\bullet V^\ast \otimes V)$ can be given as follows: for the unary bracket, we take $d_\bullet$, as in \eqref{eq:resgln}. For the binary bracket we take the Nijenhuis-Richardson bracket: For $1\leq k_1,k_2 \leq n$ define
\begin{align}
[-,-]: \Gamma\left(\wedge^{k_1} V^\ast \otimes V \right)\times \Gamma\left( \wedge^{k_2} V^\ast \otimes V\right) &\to\Gamma\left( \wedge^{k_1+k_2-1}V^\ast \otimes V\right)\nonumber \\
(f_1\cdot (\phi_1\otimes w_1), f_2\cdot(\phi_2\otimes w_2)) \mapsto& f_1f_2\cdot (\phi_1 \iota_{w_1}(\phi_2) \otimes w_2 - (-1)^{(k_1-1)(k_2-1)} (1 \leftrightarrow 2))\nonumber \\
&+\left(\delta_{k_1,1}f_1\rho(\phi_1\otimes w_1)(f_2)\cdot (\phi_2 \otimes w_2) - (1 \leftrightarrow 2)\right).\label{eq:glnbracket}
\end{align}
One can check that this defines a dg-Lie algebroid over $V$ for which the image of $\rho$ is exactly $\mc F_0$. We denote it by $L_\infty(\mc F_0)$.

Note that the differentials $d_p$ vanish at the origin for $p = 2,\dots, n$. This implies that any $L_\infty$-algebroid structure with the same property is $L_\infty$-isomorphic to the one above in a neighborhood of the origin: by \cite[Corollary 2.9]{univlinfty}, any two $L_\infty$-algebroid structures are homotopy equivalent by an $L_\infty$-morphism $\Phi$. By minimality and \cite[Lemma 4.13iii)]{univlinfty}, this implies that the homotopy equivalence is an isomorphism in the origin. As invertibility is an open condition it follows that it is an isomorphism in a neighborhood of the origin. 
\subsection{Linear vector fields preserving a subspace}\label{sec:subsp}
Let $V$ be a real vector space of dimension $n$, and $W \subset V$ a linear subspace. Let $$
\
\mc F_W(V) :=\{X \in \mc F_0(V) \mid X(I_{W}) \subset I_{W} \}
$$
be the $C^\infty(V)$-submodule of linear vector fields tangent to the subspace $W$. This is a singular foliation, and is induced by the action of the Lie subalgebra $\mf {gl}(V,W)$ of $\mf{gl}(V)$ given by
$$
\mf {gl}(V,W) = \{A \in \mf {gl}(V) \mid A(W) \subset W\},
$$
the endomorphisms of $V$ preserving $W$. The leaves of this foliation consist of the origin, the connected components of $W\setminus\{0\}$, and the connected components of $V\setminus W$. \\
\begin{exmp}
Let $V = \mb R^2, W = \{(x,0)\in \mb R^2\mid x \in \mb R\}$. Then $\mc F_W(V)$ is generated by the vector fields $ x \partial_x, y\partial_x, y\partial_y$, and the leaves are the positive $x$-axis, the origin, the negative $x$-axis, the upper half plane and the lower half plane. In this case $\mf {gl}(V,W)$ consists of all upper triangular matrices.
\end{exmp}
We can describe a minimal universal $L_\infty$-algebroid of $\mc F_W$ as a $L_\infty$-subalgebroid of $L_\infty(\mc F_0)$. In particular, it will again be a dg-Lie algebroid.
\begin{defn}
Let $j \in \{1,\dots, n\}$. Define $K_j \subset \wedge^j V^\ast \otimes V = \Hom(\wedge^j V,V)$ by
$$
K_j := \{\phi \in \wedge^j V^\ast \otimes V\mid \forall w\in W,\forall v_1,\dots, v_{j-1} \in V : \phi(w,v_1,\dots, v_{j-1}) \in W\}.
$$
\end{defn}
\begin{prop}\label{prop:subspres}
\begin{itemize}
    \item []
    \item[i)] The differential $$d_j:\Gamma(\wedge^j V^\ast \otimes V) \to \Gamma(\wedge^{j-1}V^\ast \otimes V) $$ as in \eqref{eq:resgln} restricts to a map
    $$
    d_j:\Gamma(K_j)\to \Gamma(K_{j-1}).
    $$
    \item[ii)] The bracket \eqref{eq:glnbracket} restricts to the subspaces $\Gamma(K_j)$.
    \item[iii)] The subcomplex 
    \begin{equation}\label{eq:resglvw}
        \begin{tikzcd}
        0 \arrow{r}& \Gamma(K_n) \arrow{r}{d_n} &\Gamma(K_{n-1}) \arrow{r}{d_{n-1}}&\dots \arrow{r}{d_2}& \Gamma(K_1)\arrow{r}{\rho_W} & \mc F_W(V) \arrow{r}& 0
        \end{tikzcd}
    \end{equation}
    is exact, where $\rho_W = \left. \rho \right|_{\Gamma(K_1)}$
\end{itemize}
Consequently, $\Gamma(K_\bullet)$ with the restrictions of $d_\bullet$ and $[-,-]$ is a minimal universal $L_\infty$-algebroid of the foliation $\mc F_W$.
\end{prop}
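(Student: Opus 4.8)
The plan is to verify the three numbered claims in order and then invoke the universality machinery from \cite{univlinfty} to conclude. For part i), I would argue that $d_j$ is, up to sign, contraction with the Euler vector field $x^i\partial_{x^i}$ acting on the $\wedge^\bullet V^\ast$ factor while leaving the $V$ factor alone. Given $\phi \in K_j$, writing $d_j\phi$ explicitly as an alternating sum $\sum_i (-1)^i x^i\, \iota_{\partial_{x^i}}\phi$ (or more invariantly, pairing one argument with the Euler field), I need to check that for $w\in W$ and $v_1,\dots,v_{j-2}\in V$ the vector $(d_j\phi)(w,v_1,\dots,v_{j-2})$ still lies in $W$. Expanding, each term is of the form (a function) times $\phi$ evaluated on $w$ together with some of the $v_\ell$ and possibly the ``Euler slot''; since $w\in W$ always sits in the first slot of $\phi$ (or can be moved there by antisymmetry at the cost of a sign), the defining property of $K_j$ applies term by term, so $(d_j\phi)(w,-,\dots,-)\in W$. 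Hence $d_j(\Gamma(K_j))\subset\Gamma(K_{j-1})$.

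For part ii), I would take the two summands of \eqref{eq:glnbracket} separately. For the Nijenhuis--Richardson part, given $\phi_1\otimes w_1 \in K_{k_1}$ and $\phi_2\otimes w_2\in K_{k_2}$, the composite $\phi_1\circ\iota_{w_1}(\phi_2)$ evaluated on $(w,v_1,\dots)$ with $w\in W$: first apply $\phi_2$ with $w$ in its distinguished slot, landing in $W$ by $\phi_2\otimes w_2\in K_{k_2}$; then $w_1\in W$ automatically (since $\phi_1\otimes w_1\in K_{k_1}$ forces, taking $j=k_1$ with the appropriate slot, that the ``output vector'' condition is compatible — more carefully, one notes $w_1$ need not lie in $W$ in general, so the relevant check is that $\phi_1$ applied to its arguments lands in $W$; this holds because the output of the inner contraction, which feeds into a slot of $\phi_1$, lies in $W$, and one of $\phi_1$'s remaining arguments can be taken to be $w$). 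The cleanest formulation: $K_j = \{\phi : \phi(w,\cdot,\dots,\cdot)\in W \text{ whenever } w\in W\}$ is closed under the pre- and post-composition operations that build the NR bracket, which I would verify by chasing where $w\in W$ goes. The anchor-correction term is $\delta_{k_1,1}$ times (a derivative of a function) times $\phi_2\otimes w_2$, which lies in $\Gamma(K_{k_2})$ by hypothesis, so it stays in the subspace; symmetrically for the $(1\leftrightarrow 2)$ term. Thus $[-,-]$ restricts.

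For part iii), exactness, I would first identify $\rho_W(\Gamma(K_1)) = \mc F_W(V)$: an element of $K_1$ is a linear map $\phi: V\to V$ with $\phi(W)\subset W$, i.e.\ exactly $\mf{gl}(V,W)\otimes C^\infty(V)$ after tensoring, and $\rho$ sends $A\in\mf{gl}(V,W)$ to the linear vector field $x\mapsto Ax$, whose span over $C^\infty(V)$ is precisely $\mc F_W(V)$ by the description of the foliation given before the proposition. For exactness at the interior and top spots, the idea is that \eqref{eq:resglvw} is the Koszul-type complex \eqref{eq:resgln} with coefficients restricted to the subspace $K_\bullet$ of the constant coefficient bundle $\wedge^\bullet V^\ast\otimes V$. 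The subspaces $K_j$ are visibly the image of an idempotent (projection onto the $W$-preserving part with respect to a chosen complement $V = W\oplus U$), so $\wedge^\bullet V^\ast\otimes V = K_\bullet \oplus C_\bullet$ as complexes (the differential $d_\bullet$, being contraction with Euler on the form part only, commutes with this constant splitting). Since \eqref{eq:resgln} is exact by Lemma \ref{lem:koszulexact} and exactness passes to direct summands of complexes (a summand of an acyclic complex is acyclic), the $K_\bullet$-summand \eqref{eq:resglvw} is exact except possibly at the far right, where exactness is the statement $\rho_W(\Gamma(K_1)) = \mc F_W(V)$ just established together with $\ker\rho_W = \im d_2|_{\Gamma(K_2)}$, which again follows from the summand decomposition since $\ker\rho = \im d_2$ in the ambient complex and both $\ker\rho$ and $\im d_2$ respect the splitting.

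Finally, with \eqref{eq:resglvw} a geometric (finitely generated projective) resolution of $\mc F_W$, and with $d_\bullet$ and $[-,-]$ restricting to it by i) and ii), the restricted structure is again a dg-Lie algebroid (the quadratic $L_\infty$ identities hold on the subalgebroid because they hold on the ambient $L_\infty(\mc F_0)$ and $\Gamma(K_\bullet)$ is closed under all operations). By \cite[Corollary 2.9]{univlinfty} it is a universal $L_\infty$-algebroid of $\mc F_W$; minimality at the origin is inherited since $d_j|_{\Gamma(K_j)}$ vanishes at $0$ for $j\geq 2$, the differentials being built from contraction with the Euler vector field whose coefficients vanish at the origin.

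\emph{Main obstacle.} The one genuinely delicate point is part ii): verifying carefully that the Nijenhuis--Richardson composition $\phi_1\circ\iota_{w_1}\phi_2$ preserves the condition defining $K_\bullet$, because the ``distinguished slot'' of the composite is not literally a slot of $\phi_2$ — one must track how an argument $w\in W$, inserted into the first slot of the composite, is routed (it goes into $\phi_2$), and simultaneously argue that $\phi_1$'s output lands in $W$ using that one of its inputs is the $W$-valued output of $\phi_2$ evaluated at $w$ while another can be taken in $W$. Getting the slot bookkeeping and signs right here is the crux; everything else is either a direct summand argument or formal transport of identities from the ambient algebroid.
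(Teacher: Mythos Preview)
Your argument for part iii) contains a genuine gap. You claim that, after choosing a complement $V=W\oplus U$, the ambient complex splits as $\Gamma(\wedge^\bullet V^\ast\otimes V)=\Gamma(K_\bullet)\oplus\Gamma(C_\bullet)$ \emph{as complexes}, justifying this by saying that $d_\bullet$ acts only on the form factor. But the idempotent projecting onto $K_j$ is not purely in the $V$-factor: using $K_j=(\wedge^j V^\ast\otimes W)\oplus(\wedge^j U^\ast\otimes U)$, the projection is the identity on $\wedge^jV^\ast\otimes W$ and the projection $\wedge^jV^\ast\twoheadrightarrow\wedge^jU^\ast$ on the $\otimes\,U$ part. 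The Euler contraction does \emph{not} preserve the complement of $\wedge^jU^\ast$ inside $\wedge^jV^\ast$. Concretely, for $e^1\in W^\ast$, $f^1\in U^\ast$, $f_1\in U$ one has $e^1\wedge f^1\otimes f_1\in C_2$, yet
\[
d_2(e^1\wedge f^1\otimes f_1)=x^1\,(f^1\otimes f_1)-y^1\,(e^1\otimes f_1),
\]
and the first summand lies in $\Gamma(U^\ast\otimes U)\subset\Gamma(K_1)$, not in $\Gamma(C_1)$. So $C_\bullet$ is not a subcomplex and the ``summand of an acyclic complex is acyclic'' step does not apply as stated.

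The paper takes a different and cleaner route: rather than splitting $K_\bullet$ off from the ambient complex, it decomposes $K_\bullet$ \emph{internally} as
\[
\Gamma(K_\bullet)=\Gamma(\wedge^\bullet V^\ast\otimes W)\ \oplus\ \Gamma(\wedge^\bullet C^\ast\otimes C),
\]
with $C$ a complement of $W$. One checks directly that each summand \emph{is} a subcomplex (on $\wedge^\bullet C^\ast$ the Euler contraction only sees the $C$-part of the Euler field), and each is a Koszul complex --- the first for $V$ with values in $W$, the second for $C$ with values in $C$ --- hence exact by Lemma~\ref{lem:koszulexact}. This same decomposition, incidentally, makes your ``crux'' in part ii) a two-line case check on simple tensors from each summand, so ii) is not where the difficulty lies.
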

\begin{proof}
Items i) and ii) are straightforward computations. For item iii), fix a complement $C$ of $W$ in $V$. Then $K_i$ can be identified with $\wedge^i V^\ast \otimes W \oplus \wedge^i C^\ast \otimes C$, and the complex \eqref{eq:resglvw} decomposes as
$$
(\Gamma(K_\bullet),\partial) = (\Gamma(\wedge^\bullet V^\ast \otimes W) \oplus \Gamma(\wedge^\bullet C^\ast \otimes C), \partial_W + \partial_C),
$$
where 
$$
\partial_W(\phi) = x^i\iota_{e_i}(\phi), 
$$
for $\phi \in \Gamma(\wedge^i V^\ast\otimes W)$, and $\{e_i\}_{i=1}^n$ is a basis for $V$, with linear coordinates $\{x^i\}_{i=1}^n$. 
For $\psi\in \Gamma(\wedge^i C^\ast \otimes C)$, 
$$
\partial_C(\psi) = y^i\iota_{f_i}(\psi),
$$
where $\{f_i\}_{i=1}^r$ is a basis for $C$, and $\{y^i\}_{i=1}^r$ are the corresponding linear coordinates. By Lemma \ref{lem:koszulexact}, both are exact, concluding the proof.
\end{proof}
\subsection{Vector fields preserving a volume form}\label{sec:vol0}
The next choice for a Lie algebra $\mf g$ acting linearly on a vector space $V$ we consider is $\mf g = \mf {sl}(V)$, the Lie algebra of traceless endomorphisms. Observe that the partition of $V$ is identical to the case of $\mf {gl}(V)$, but that the underlying submodules of $\mf X_V$ are different. Let $\mu \in \wedge^n V^\ast$ be a non-zero element, and denote the foliation given by the action of $\mf{sl}(V)$ by $\mc F_\mu$. 
\subsubsection{The projective resolution}
As it is in general not possible to restrict a projective resolution of a module to a submodule, one cannot directly get a projective resolution of the module of vector fields generated by the action of $\mf {sl}(V)$, by restricting all modules to live over $\mf {sl}(V)$. But for the most part, the resolution we will construct is related to the one given in \eqref{eq:resgln}. Consider the following diagram of $C^\infty(V)$-modules:
\begin{equation}\label{diag:parttr}
\begin{tikzcd}
\Gamma(\wedge^2 V^\ast \otimes V) \arrow{r}{d_2}& \Gamma(V^\ast \otimes V)\arrow{d}{\Tr}\\
\Gamma(V^\ast) \arrow{r}{\partial_1} & \Gamma(\mb R)
\end{tikzcd},
\end{equation}
where $\partial_1:\Gamma(V^\ast) \to \Gamma(\mb R)$ is the contraction with the negative of the Euler vector field $x^i\partial_{x^i}$ and $\Tr$ is the trace of endomorphisms. \\
Now there is a linear map
$$
\phi_2: \wedge^2 V^\ast \otimes V \to V^\ast
$$
by taking partial traces: for $\psi \in \wedge^2 V^\ast, v \in V$, we set
$$
\phi_2( \psi \otimes v) = -\iota_v(\psi).
$$
We now claim that (the constant extension of) $\phi_2$ completes \eqref{diag:parttr} to an anti-commutative square. Indeed: let $\{e_i\}_{i = 1}^n$ be a basis of $V$, with corresponding coordinates $\{x^i\}_{i=1}^n$, and let $\psi \otimes v \in \Gamma(\wedge^2 V^\ast \otimes V)$. Then
\begin{align*}
\partial_1(\phi_2(\psi \otimes v)) 
&= -\partial_1(\iota_v(\psi)) \\
&= x^i\iota_{e_i}\iota_v(\psi)\\
&= -x^i\iota_v\iota_{e_i}(\psi)\\
&= -\Tr(x^i\iota_{e_i}(\psi)\otimes v)\\
&= -\Tr(d_2(\psi\otimes v)).
\end{align*}
More generally, for $1\leq k \leq n$, we can define the anti-symmetrized partial trace map
$$
\phi_k: \wedge^k V^\ast \otimes V \to \wedge^{k-1} V^\ast.
$$
For $\alpha \in \wedge^k V^\ast, v \in V$, we set
$$
\phi_k(\alpha \otimes v) = (-1)^{k-1} \iota_v(\alpha).
$$
Observe that $\phi_1$ is the usual trace.\\
Note that the map $\partial_1: \Gamma(V^\ast) \to \Gamma(\mb R)$ as in \eqref{diag:parttr} of free $C^\infty(V)$-modules can be extended to obtain a cochain complex
\begin{equation}\label{eq:vanishideal}
\begin{tikzcd}
0 \arrow{r} & \Gamma(\wedge^n V^\ast) \arrow{r}{\partial_n}& \Gamma(\wedge^{n-1} V^\ast) \arrow{r}{\partial_{n-1}} &\Gamma(\wedge^{n-2} V^\ast) \arrow{r}{\partial_{n-2}} &\dots \arrow{r}{\partial_2}& \Gamma(V^\ast) \arrow{r}{\partial_1}& C^\infty(V) \arrow{r}& 0
\end{tikzcd}.
\end{equation}
The cochain complex is concentrated in negative degrees, with $C^\infty(V)$ being in degree $-1$. Note that by Lemma \ref{lem:koszulexact}, the complex  is exact in degrees $-2,\dots, -n-1$, as it is the truncation of \eqref{eq:koszexact2}. The following lemma describes the compatibility of $\phi$ with the respective differentials:
\begin{lem}
The map $\phi:(\Gamma(\wedge^\bullet V^\ast \otimes V),d_\bullet) \to (\Gamma( \wedge^{\bullet -1} V^\ast),\partial_\bullet)$ is a cochain map of degree $-1$, which is surjective in degrees $0,\dots, -(n-2)$, and an isomorphism in degree $-n+1$, where $d_\bullet$ is as in \eqref{eq:resgln}, and $\partial_\bullet$ is as in \eqref{eq:vanishideal}.
\end{lem}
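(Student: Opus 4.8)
The plan is to reduce all three assertions to elementary linear algebra on a single fibre. The maps involved --- the differentials $d_\bullet$ and $\partial_\bullet$ (interior products with, respectively, the Euler vector field $E=x^i\partial_{x^i}$ and its negative, acting on the $\wedge^\bullet V^\ast$-factor) and the antisymmetrized partial traces $\phi_\bullet$ --- are all $C^\infty(V)$-linear vector-bundle maps, that is, constant (or Euler-field) extensions of fixed linear maps between the fibres $\wedge^kV^\ast\otimes V$, $\wedge^{k-1}V^\ast$, $C^\infty(V)$. Hence it is enough to verify the cochain identity on generators $\alpha\otimes v$ with $\alpha\in\wedge^kV^\ast$ and $v\in V$ constant, and to establish surjectivity --- and, in the top degree, bijectivity --- of the underlying linear maps $\phi_k\colon\wedge^kV^\ast\otimes V\to\wedge^{k-1}V^\ast$; the statements for sheaves of sections then follow by $C^\infty(V)$-linearity.

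For the cochain property I would simply extend the computation already displayed for $k=2$. On a generator $\alpha\otimes v$, using $\phi_k(\alpha\otimes v)=(-1)^{k-1}\iota_v\alpha$, the formulas $d_k=\iota_E$ and $\partial_{k-1}=-\iota_E$, the $C^\infty(V)$-linearity of $\phi_{k-1}$, and the anticommutativity $\iota_E\iota_v=-\iota_v\iota_E$, one gets
\[
\partial_{k-1}\bigl(\phi_k(\alpha\otimes v)\bigr)=(-1)^{k-1}\,\iota_v\iota_E\alpha
\qquad\text{and}\qquad
\phi_{k-1}\bigl(d_k(\alpha\otimes v)\bigr)=(-1)^{k-2}\,\iota_v\iota_E\alpha ,
\]
so that $\partial_{k-1}\circ\phi_k=-\phi_{k-1}\circ d_k$, which is precisely the condition for $\phi$ to be a cochain map of degree $-1$ (the identity $\iota_E^2=0$ also shows in passing that $\partial_\bullet$ squares to zero, so that \eqref{eq:vanishideal} is a genuine complex).

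For surjectivity in degrees $0,\dots,-(n-2)$, fix a basis $\{e_i\}$ of $V$ with dual basis $\{e^i\}$. Given a basis element $e^{j_1}\wedge\dots\wedge e^{j_{k-1}}$ of $\wedge^{k-1}V^\ast$ with $1\le k\le n-1$, choose an index $p\notin\{j_1,\dots,j_{k-1}\}$, which is possible because $k-1\le n-2<n$; then
\[
\phi_k\bigl((e^p\wedge e^{j_1}\wedge\dots\wedge e^{j_{k-1}})\otimes e_p\bigr)=(-1)^{k-1}\,\iota_{e_p}\bigl(e^p\wedge e^{j_1}\wedge\dots\wedge e^{j_{k-1}}\bigr)=(-1)^{k-1}\,e^{j_1}\wedge\dots\wedge e^{j_{k-1}},
\]
so $\phi_k$ meets a basis of $\wedge^{k-1}V^\ast$ and is therefore onto, hence surjective on sections by $C^\infty(V)$-linearity. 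In the top degree $-n+1$, that is $k=n$, the same computation with $\nu:=e^1\wedge\dots\wedge e^n$ gives $\phi_n(\nu\otimes e_i)=\pm\, e^1\wedge\dots\wedge\widehat{e^i}\wedge\dots\wedge e^n$, so $\phi_n$ is again onto; since $\dim(\wedge^nV^\ast\otimes V)=n=\dim\wedge^{n-1}V^\ast$, it is then a linear isomorphism on the fibre, hence an isomorphism of $C^\infty(V)$-modules (one may also read off directly the inverse $e^1\wedge\dots\wedge\widehat{e^i}\wedge\dots\wedge e^n\mapsto\pm\,\nu\otimes e_i$). I do not expect a genuine obstacle here: the content is elementary linear algebra, and the only point that requires care is keeping the sign conventions consistent --- matching the $(-1)^{k-1}$ in the definition of $\phi_k$ with the signs chosen for $d_k$ and $\partial_k$ --- so that $\phi$ turns out to be an honest cochain map rather than merely an intertwiner up to sign.
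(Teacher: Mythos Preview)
Your proof is correct and follows essentially the same approach as the paper: verify the anti-commutation $\partial_{k-1}\circ\phi_k=-\phi_{k-1}\circ d_k$ on generators $\alpha\otimes v$ using $\iota_E\iota_v=-\iota_v\iota_E$, exhibit explicit preimages of basis monomials of $\wedge^{k-1}V^\ast$ by wedging with an unused $e^p$ and tensoring with $e_p$, and in the top degree argue bijectivity. The only cosmetic differences are that the paper places the extra factor $e^q$ at the end rather than the front (absorbing your $(-1)^{k-1}$), and that it identifies $\phi_n$ directly with contraction by the volume form $\mu=e^1\wedge\dots\wedge e^n$ rather than appealing to a dimension count; neither changes the substance of the argument.
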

\begin{proof}
Let $\alpha \otimes v \in \Gamma(\wedge^{k+1} V^\ast\otimes  V)$. We first show that $\phi$ anti-commutes with the respective differential:
\begin{align*}
    \partial_k(\phi_{k+1}(\alpha\otimes v)) &= \partial_k((-1)^{k}\iota_v(\alpha))\\
    &= (-1)^{k+1} x^i\iota_{e_i}(\iota_v(\alpha))\\
    &= (-1)^{k} x^i\iota_v(\iota_{e_i}(\alpha))\\
    &= -\phi_{k}(x^i\iota_{e_i}(\alpha) \otimes v)\\
    &= -\phi_{k}(d_{k+1}(\alpha \otimes v)).
\end{align*}
To see the surjectivity, pick a basis $\{e_i\}_{i =1}^n$ of $V$, and a dual basis $\{e^i\}_{i = 1}^n$ of $V^\ast$ such that $\mu = e^1\wedge\dots \wedge e^n$. For $k\in \{1,\dots, n\}$, a basis for $\wedge^{k-1} V^\ast$ is given by $\{e^{i_1}\wedge \dots \wedge e^{i_{k-1}}\mid 1\leq i_1 <\dots < i_{k-1} \leq n\}$. Given $e^{i_1} \wedge \dots \wedge e^{i_{k-1}}$, let $q\in \{1,\dots, n\}-\{i_1,\dots, i_{k-1}\}$. Then 
$$
\phi_k(e^{i_1}\wedge \dots \wedge e^{i_{k-1}}\wedge e^q \otimes e_q) = e^{i_1}\wedge \dots \wedge e^{i_{k-1}},
$$
where $q$ is \emph{not} summed over.
Further, under the identification 
\begin{align*}
    V&\to \wedge^n V^\ast \otimes V\\
    v &\mapsto e^1 \wedge \dots \wedge e^n \otimes v,
\end{align*}
$\phi_n$ is the map $V\to \wedge^{n-1}V^\ast$ given by contraction with the volume form $e^1\wedge \dots \wedge e^n$, which is an isomorphism.
\end{proof}
We use the properties of $\phi$ to construct a projective resolution for $\mc F_\mu$.
\begin{prop}\label{prop:ressl}
Let for $i = 1,\dots, n$ $K_i\subset \wedge^i V^\ast \otimes V$ be defined by $$K_i := \ker(\phi_i).$$The sequence 
\begin{equation}\label{diag:ressln}
\begin{tikzcd}
    0 \arrow{r} & \Gamma(\wedge^n V^\ast) \arrow{r}{d_n \phi_{n}^{-1}\partial_n} &\Gamma(K_{n-1})\arrow{r}{d_{n-1}} & \dots \arrow{r}{d_2} & \Gamma(K_1) \arrow{r}{\rho_\mu} &\mc F_\mu(V)\arrow{r}& 0
\end{tikzcd} 
\end{equation}
is exact, where $\rho_\mu = \left. \rho\right|_{\Gamma(K_1)}$. 
\end{prop}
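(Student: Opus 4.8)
The plan is to obtain \eqref{diag:ressln} as the output of a long exact sequence in homology attached to a short exact sequence of complexes built from the anti-symmetrised partial trace $\phi$. First I would set up three finite complexes, indexed so that the differential lowers degree by one and the $j$-th term sits in degree $j$. Let $A_\bullet$ be $(\Gamma(\wedge^{\bullet}V^\ast\otimes V),d_\bullet)$ with $\Gamma(V^\ast\otimes V)$ in degree $1$; by \eqref{eq:resgln} its homology is $\mc F_0(V)$ in degree $1$ and vanishes in degrees $2,\dots,n$. Let $B_\bullet$ be the complex \eqref{eq:vanishideal} with the term $\Gamma(\wedge^n V^\ast)$ deleted and $C^\infty(V)$ placed in degree $1$; by Lemma \ref{lem:koszulexact} (which makes \eqref{eq:vanishideal} the truncation of the exact complex \eqref{eq:koszexact2}) it is exact in degrees $2,\dots,n-1$, with $H_1(B_\bullet)\cong\mb R$ (evaluation at the origin) and $H_n(B_\bullet)=\ker\partial_{n-1}=\mathrm{im}\,\partial_n\cong\Gamma(\wedge^n V^\ast)$, the last isomorphism being $\partial_n$ itself, which is injective by the same lemma. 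By the compatibility lemma just established, $\phi$ is, up to the harmless sign coming from its anti-commutativity with the differentials, a degree-zero morphism $A_\bullet\to B_\bullet$ which is surjective in every degree: it is surjective in degrees $1,\dots,n-1$ and an isomorphism in degree $n$, so its image is all of $B_\bullet$. Its kernel is exactly the subcomplex $(\Gamma(K_\bullet),d_\bullet)$ — note $\Gamma(K_n)=\ker\phi_n=0$ — so we obtain a short exact sequence of complexes
\[
0 \longrightarrow \Gamma(K_\bullet) \longrightarrow A_\bullet \xrightarrow{\ \phi\ } B_\bullet \longrightarrow 0 .
\]

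Then I would feed this into the long exact sequence in homology and read off the statement. Since $H_\bullet(A_\bullet)$ is concentrated in degree $1$ and $H_\bullet(B_\bullet)$ in degrees $1$ and $n$, for $n\geq 3$ the long exact sequence breaks into three independent pieces. The top piece is $0\to H_n(B_\bullet)\xrightarrow{\delta}H_{n-1}(\Gamma(K_\bullet))\to 0$, i.e. an isomorphism $\Gamma(\wedge^n V^\ast)\xrightarrow{\sim}\ker\big(d_{n-1}\colon\Gamma(K_{n-1})\to\Gamma(K_{n-2})\big)$; unwinding the definition of $\delta$ (lift $\partial_n\alpha$ along the isomorphism $\phi_n$, apply $d_n$, and note the result lies in $\ker\phi_{n-1}=\Gamma(K_{n-1})$ because $\phi$ intertwines the differentials) shows $\delta$ is induced by $\alpha\mapsto d_n\phi_n^{-1}\partial_n\alpha$. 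Together with the obvious injectivity of $d_n\phi_n^{-1}\partial_n$ (a composite of an injection, an isomorphism and an injection), this is precisely exactness of \eqref{diag:ressln} at $\Gamma(\wedge^n V^\ast)$ and at $\Gamma(K_{n-1})$. For each $2\leq j\leq n-2$ the long exact sequence places zeros on both sides of $H_j(\Gamma(K_\bullet))$, giving exactness of \eqref{diag:ressln} at $\Gamma(K_j)$. The bottom piece is $0\to H_1(\Gamma(K_\bullet))\to\mc F_0(V)\to\mb R\to 0$, where $H_1(\Gamma(K_\bullet))=\Gamma(K_1)/\mathrm{im}(d_2|_{\Gamma(K_2)})$ and the map to $\mc F_0(V)$ is the one induced by $\Gamma(K_1)\hookrightarrow\Gamma(V^\ast\otimes V)\xrightarrow{\rho}\mc F_0(V)$, namely $\rho_\mu$. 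Hence $\ker\rho_\mu=\mathrm{im}(d_2|_{\Gamma(K_2)})$ (exactness at $\Gamma(K_1)$) and $\mathrm{im}\,\rho_\mu=\ker(\mc F_0(V)\to\mb R)$; since $\Gamma(K_1)$ is the module of sections of the bundle of traceless endomorphisms and $\rho_\mu$ carries a constant traceless endomorphism to its linear vector field, $\mathrm{im}\,\rho_\mu=\mc F_\mu(V)$ by the definition of $\mc F_\mu$, so \eqref{diag:ressln} is exact at $\mc F_\mu(V)$ as well. For $n=2$ the whole long exact sequence is a single piece $0\to\Gamma(\wedge^2V^\ast)\xrightarrow{\ \delta\ }\Gamma(K_1)\xrightarrow{\rho_\mu}\mc F_0(V)\to\mb R\to 0$, and the same reading applies.

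I expect the only genuinely delicate points to be bookkeeping rather than mathematics: keeping the gradings and the sign in $\partial\phi=-\phi d$ straight so that $\phi$ really does produce a short exact sequence of complexes, and identifying the connecting homomorphism precisely with $d_n\phi_n^{-1}\partial_n$ (which is what makes the "extra" top term $\Gamma(\wedge^n V^\ast)$, replacing the vanishing $\Gamma(K_n)$, appear with exactly that differential). What might look like the hard part — pinning down the cokernel map $\mc F_0(V)\to\mb R$ and showing its kernel is $\mc F_\mu(V)$ — is in fact free, because $\mathrm{im}\,\rho_\mu$ is $\mc F_\mu(V)$ by construction, so no separate computation (for instance of the divergence at the origin) is needed; as a byproduct one even obtains the identification $\mc F_\mu(V)=\ker(\mc F_0(V)\to\mb R)$. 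Finally, the case $\dim V\leq 1$ is vacuous, as then $\mf{sl}(V)=0$.
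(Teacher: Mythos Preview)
Your argument is correct. The long exact sequence in homology attached to the short exact sequence $0\to\Gamma(K_\bullet)\to A_\bullet\to B_\bullet\to 0$ is exactly the right tool, and your identification of the connecting homomorphism with $d_n\phi_n^{-1}\partial_n$ is clean and accurate. The sign issue in $\partial\phi=-\phi d$ is indeed harmless, since replacing $\phi_j$ by $(-1)^j\phi_j$ yields a genuine chain map with the same kernel.

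The paper takes a different, more hands-on route: it proves exactness at each $\Gamma(K_i)$ by an explicit diagram chase. Given $\chi\in\Gamma(K_i)$ with $d_i\chi=0$, it lifts $\chi$ to $\psi\in\Gamma(\wedge^{i+1}V^\ast\otimes V)$ using exactness of the $\mf{gl}(V)$-resolution, observes that $\phi_{i+1}(\psi)$ is $\partial$-closed, lifts it through $\partial$ and then through $\phi_{i+2}$ (using surjectivity), and subtracts the resulting correction to force the lift into $\Gamma(K_{i+1})$. The case $i=n-1$ is handled separately using that $\phi_n$ is an isomorphism. In other words, the paper carries out by hand precisely the snake-lemma computation that your long exact sequence encapsulates. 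Your approach is shorter and makes the structural reason for the appearance of $d_n\phi_n^{-1}\partial_n$ transparent (it is literally the connecting homomorphism); the paper's approach is more elementary in that it never invokes the long exact sequence as a black box, and it displays concretely how an arbitrary closed element of $\Gamma(K_i)$ is made exact.
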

\begin{proof}
Note that by definition of $\phi_1$, $K_1 = \mf {sl}(V)$, so $\rho_\mu$ is surjective by definition of $\mc F_\mu(V)$.\\
Let $i \in \{1,\dots, n-2\}$. Consider the following diagram with (anti)-commuting squares, where the middle and bottom rows are exact by Lemma \ref{lem:koszulexact}:
\[
\begin{tikzcd}
\Gamma(K_{i+2}) \arrow{r}{d_{i+2}} \arrow{d}&\Gamma(K_{i+1}) \arrow{r}{d_{i+1}} \arrow{d} & \Gamma(K_i) \arrow{r}{d_i}\arrow{d} & \Gamma(K_{i-1}) \arrow{d}\\
\Gamma(\wedge^{i+2} V^\ast \otimes V) \arrow{r}{d_{i+2}} \arrow{d}{\phi_{i+2}} & \Gamma(\wedge^{i+1} V^\ast \otimes V) \arrow{d}{\phi_{i+1}}\arrow{r}{d_{i+1}} & \Gamma(\wedge^i V^\ast \otimes V) \arrow{r}{d_i} \arrow{d}{\phi_i} & \Gamma(\wedge^{i-1} V^\ast \otimes V)\arrow{d}{\phi_{i-1}}\\
\Gamma(\wedge^{i+1} V^\ast ) \arrow{r}{\partial_{i+1}}&\Gamma(\wedge^{i} V^\ast)\arrow{r}{\partial_i} & \Gamma(\wedge^{i-1} V^\ast) \arrow{r}{\partial_{i-1}} & \Gamma(\wedge^{i-2} V^\ast)
\end{tikzcd}
\]
For exactness at $\Gamma(K_i)$, take $\chi\in \Gamma(K_i)$ such that $d_i(\chi) = 0$, where $d_1$ is understood to be $\rho_\mu$. Then by exactness of the middle row, there exists $\psi \in \Gamma(\wedge^{i+1} V^\ast \otimes V)$ such that $d_{i+1}(\psi) = \chi$. Now $\psi$ may not be an element of $\Gamma(K_{i+1})$, so we consider $\phi_{i+1}(\psi)$.\\
Note that
$$
\partial_{i}\phi_{i+1}(\psi) = -\phi_{i}(\partial_{i+1}(\psi)) = -\phi_i(\chi) = 0,
$$
so by exactness of $\eqref{eq:vanishideal}$ there exists $\tau \in \Gamma(\wedge^{i+1} V^\ast)$ such that
$$
\phi_{i+1}(\psi) = \partial_{i+1}(\tau). 
$$
Using surjectivity of $\phi_{i+2}$, lift $\tau$ to an element $\tilde{\tau}\in \Gamma(\wedge^{i+2} V^\ast \otimes V)$. Then 
$$
\phi_{i+1}(\psi + \partial_{i+2}(\tilde{\tau})) = \phi_{i+1}(\psi)- \partial_{i+1}(\tau) = 0,
$$
so $\psi+ \partial_{i+2}(\tilde{\tau}) \in K_{i+1}$, and 
$$
\partial_{i+1}(\psi+\partial_{i+2}(\tilde{\tau})) = \chi.
$$
For exactness at $\Gamma(K_{n-1})$, consider 
\[
\begin{tikzcd}
0 \arrow{r} & \Gamma(\wedge^n V^\ast) \arrow{r}{d_n(\phi_{n})^{-1} \partial_n} & \Gamma(K_{n-1}) \arrow{d} \arrow{r}{d_{n-1}} & \Gamma(K_{n-2})\arrow{d}\\
0 \arrow{r} & \Gamma(\wedge^n V^\ast \otimes V) \arrow{r}{d_n} \arrow{d}{\phi_n} & \Gamma(\wedge^{n-1} V^\ast \otimes V) \arrow{d}{\phi_{n-1}} \arrow{r}{d_{n-1}}& \Gamma(\wedge^{n-2} V^\ast \otimes V) \arrow{d}{\phi_{n-2}}\\
\Gamma(\wedge^n V^\ast) \arrow{r}{\partial_n} & \Gamma(\wedge^{n-1} V^\ast) \arrow{r}{\partial_{n-1}} & \Gamma(\wedge^{n-2} V^\ast) \arrow{r}{\partial_{n-2}} &\Gamma(\wedge^{n-3} V^\ast) 
\end{tikzcd}.
\]
Let $\xi \in \Gamma(K_{n-1})$ such that 
$$
d_{n-1}(\xi) = 0. 
$$
Then there exists $\eta \in \Gamma(\wedge^{n} V^\ast \otimes V)$ such that
$$
d_n(\eta) = \xi.
$$
As $\phi_{n}$ is an isomorphism, we have $\eta = \phi^{-1}_n(\phi_n(\eta))$. Moreover, we know that
$$
\partial_{n-1}(\phi_n(\eta)) = -\phi_{n-1}(d_{n}(\eta)) = -\phi_{n-1}(\xi) = 0,
$$
so
$$
\phi_{n}(\eta) = \partial_{n}(\pi)
$$
for some $\pi\in \Gamma(\wedge^n V^\ast)$. Consequently, $$ \xi = d_n (\phi_{n}^{-1}(\partial_{n}(\pi))).$$ 
Finally, exactness at $\Gamma(\wedge^n V^\ast)$ is clear.
\end{proof}
\subsubsection{The $L_\infty$-algebroid structure}\label{sec:bracketssl}
In this section, we will construct the $\linfty$-algebroid structure on the resolution \eqref{diag:ressln} of $\mc F_\mu(V)$.  As in most degrees the spaces involved in the resolution of $\mc F_\mu$ are contained in the spaces involved in the resolution of $\mc F_0$, we try to use the restriction of \eqref{eq:glnbracket}. The following lemma shows that this can be done:
\begin{lem}\label{lem:bracketrestr}
The bracket \eqref{eq:glnbracket} restricts to the subspaces $\Gamma(K_i)$.
\end{lem}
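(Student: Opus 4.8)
The plan is to establish a single Leibniz-type identity for the maps $\phi_\bullet$ and then read off the lemma from it. For a decomposable $\xi=\alpha\otimes v\in\wedge^{k}V^\ast\otimes V$ (extended linearly in $\xi$) let $i_\xi$ denote the degree-$(k-1)$ derivation of $\wedge^\bullet V^\ast$ acting by $\gamma\mapsto\alpha\wedge\iota_v(\gamma)$, and set $\mc D_\xi:=i_\xi$ when $k\geq 2$ and $\mc D_\xi:=\Lie_{\rho(\xi)}$, the Lie derivative along the linear vector field $\rho(\xi)$, when $k=1$. The claim is that for all $\xi_1\in\Gamma(\wedge^{k_1}V^\ast\otimes V)$ and $\xi_2\in\Gamma(\wedge^{k_2}V^\ast\otimes V)$,
\[
\phi_{k_1+k_2-1}\big([\xi_1,\xi_2]\big)=\mc D_{\xi_1}\!\big(\phi_{k_2}(\xi_2)\big)-(-1)^{(k_1-1)(k_2-1)}\,\mc D_{\xi_2}\!\big(\phi_{k_1}(\xi_1)\big).
\]
Granting this, if $\xi_1\in\Gamma(K_{k_1})$ and $\xi_2\in\Gamma(K_{k_2})$ then $\phi_{k_1}(\xi_1)=0=\phi_{k_2}(\xi_2)$, so the right-hand side vanishes, i.e.\ $[\xi_1,\xi_2]\in\Gamma(K_{k_1+k_2-1})$ (and the case $k_1+k_2-1>n$ is trivial, since then $\wedge^{k_1+k_2-1}V^\ast=0$). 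Morally, the displayed formula is precisely the statement that $\phi_\bullet$ is a (degree $-1$) cocycle for the natural action of $\Gamma(\wedge^\bullet V^\ast\otimes V)$ on $\Gamma(\wedge^{\bullet-1}V^\ast)$ by $\mc D$, whence $\ker\phi_\bullet=K_\bullet$ is automatically closed under the bracket.

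I would first treat the algebraic (first) summand of \eqref{eq:glnbracket}, namely $i_{\xi_1}\xi_2-(-1)^{(k_1-1)(k_2-1)}i_{\xi_2}\xi_1$ with $i_{\alpha\otimes v}$ acting on the $\wedge^\bullet V^\ast$-leg. Since $\phi_\bullet$, the operators $i_\bullet$, and this summand are all fiberwise and $C^\infty(V)$-bilinear, it suffices to verify the identity (with $\mc D_\xi=i_\xi$) on constant decomposables $\xi_1=\alpha\otimes v$, $\xi_2=\beta\otimes w$. Using $\phi_m(\gamma\otimes u)=(-1)^{m-1}\iota_u(\gamma)$ and the derivation property of $\iota_u$ one expands
\[
\phi_{k_1+k_2-1}\!\big(i_{\alpha\otimes v}\xi_2\big)=(-1)^{k_1+k_2}\,\iota_w\!\big(\alpha\wedge\iota_v\beta\big)=(-1)^{k_1+k_2}\Big((\iota_w\alpha)\wedge(\iota_v\beta)+(-1)^{k_1}\alpha\wedge\iota_w\iota_v\beta\Big),
\]
and the mirror expression for $i_{\beta\otimes w}\xi_1$. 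In the antisymmetrized combination the two ``cross'' terms $(\iota_w\alpha)\wedge(\iota_v\beta)$ and $(\iota_v\beta)\wedge(\iota_w\alpha)$ occur with opposite effective sign and cancel; after using $\iota_v\iota_w=-\iota_w\iota_v$ the two surviving terms are $\alpha\wedge\iota_v(\iota_w\beta)=i_{\alpha\otimes v}(\iota_w\beta)$ and $\beta\wedge\iota_w(\iota_v\alpha)=i_{\beta\otimes w}(\iota_v\alpha)$, and tracking the residual signs turns $\iota_w\beta$ into $\phi_{k_2}(\xi_2)$ and $\iota_v\alpha$ into $\phi_{k_1}(\xi_1)$, collapsing the prefactors to $1$ and $-(-1)^{(k_1-1)(k_2-1)}$ respectively. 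This sign computation is the one genuinely computational step; everything around it is formal.

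It remains to incorporate the anchor (second) summand of \eqref{eq:glnbracket}, which is nonzero only if $k_1=1$ or $k_2=1$. If $k_1=1$ then $k_1+k_2-1=k_2$, so $\phi_{k_2}$ applied to $\delta_{k_1,1}f_1\,\rho(\xi_1)(f_2)\,\xi_2$ (for decomposables $\xi_1=f_1(\alpha\otimes v)$, $\xi_2=f_2(\beta\otimes w)$) is $\delta_{k_1,1}f_1\,\rho(\xi_1)(f_2)\,\phi_{k_2}(\xi_2)$; summing such terms over a decomposition $\xi_2=\sum_b g_b(\beta_b\otimes w_b)$ and using that the forms $\phi_{k_2}(\beta_b\otimes w_b)$ are constant, one gets exactly the derivative of $\phi_{k_2}(\xi_2)$ along $\rho(\xi_1)$ taken coefficient-wise, which together with the algebraic contribution $i_{\xi_1}(\phi_{k_2}(\xi_2))$ assembles into $\Lie_{\rho(\xi_1)}\!\big(\phi_{k_2}(\xi_2)\big)=\mc D_{\xi_1}\!\big(\phi_{k_2}(\xi_2)\big)$; the subcase $k_2=1$ is symmetric, and when $k_1=k_2=1$ each $\mc D_{\xi_i}$ acts on the function $\phi_1(\xi_i)=\Tr(\xi_i)$ simply as differentiation along $\rho(\xi_i)$. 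Putting the two contributions together yields the displayed identity, and with it the lemma. The main obstacle I anticipate is the sign bookkeeping of the second paragraph; the reshuffling of function coefficients in the anchor part — forced by the fact that this part is not $C^\infty(V)$-bilinear — is routine.
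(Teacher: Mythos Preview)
The paper omits the proof of this lemma entirely, so there is nothing to compare your argument to; it is treated as a direct computation. Your approach via a derivation-type identity for $\phi_\bullet$ is correct and in fact parallels what the paper does later for $\phi^\omega$ in Lemma~\ref{lem:propspo}v). The algebraic computation in your second paragraph is right: the cross terms cancel and the survivors rearrange to $i_{\xi_1}(\phi_{k_2}(\xi_2))-(-1)^{(k_1-1)(k_2-1)}i_{\xi_2}(\phi_{k_1}(\xi_1))$.

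There is one imprecision worth flagging. For $k_1=1$ and $\xi_1=f_1(\alpha\otimes v)$ non-constant, the vector field $\rho(\xi_1)=f_1\,\rho(\alpha\otimes v)$ is not linear, and the operator you actually obtain on the right-hand side is $i_{\xi_1}+\nabla_{\rho(\xi_1)}$, where $\nabla$ is the trivial connection on the bundle $\wedge^{\bullet}V^\ast$ (i.e.\ coefficient-wise differentiation). This agrees with $\Lie_{\rho(\xi_1)}$ only when $f_1$ is constant; in general $\Lie_{f_1 X}\gamma = f_1\Lie_X\gamma + df_1\wedge\iota_X\gamma$, and the extra $df_1\wedge\iota_X$-term is not produced by the bracket. (Concretely, for $V=\mb R^2$, $\xi_1=x^2(e^1\otimes e_1)$ and $\xi_2=(e^1\wedge e^2)\otimes e_2$ one gets $\phi_2([\xi_1,\xi_2])=x^2 e^1$, whereas $\Lie_{\rho(\xi_1)}(\phi_2(\xi_2))=x^2e^1+x^1e^2$.) This does not affect the lemma: all you need is that $\phi_{k_1+k_2-1}([\xi_1,\xi_2])$ is a sum of operators applied to $\phi_{k_1}(\xi_1)$ and $\phi_{k_2}(\xi_2)$, which you have shown; the precise identification of the degree-0 operator is irrelevant once both $\phi$'s vanish. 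If you want the displayed identity to hold literally, replace $\Lie_{\rho(\xi)}$ by the natural Lie-algebroid representation of $\Gamma(\mf{gl}(V))$ on $\Gamma(\wedge^{\bullet}V^\ast)$, namely $i_\xi+\nabla_{\rho(\xi)}$.
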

This gives us a hint on how to extend the bracket to \eqref{diag:ressln}: on the subcomplex given by the part up until degree $n-1$, it is given by \eqref{eq:glnbracket}. Note that there is no issue when $k_1 + k_2 - 1 = n$: since the bracket should land in $\Gamma(K_{n}) = 0$, we can unambiguously extend this definition when we replace $\Gamma(K_{n})$ by $\Gamma(\wedge ^n V^\ast)$.\\
For degree reasons and the Leibniz identity in a $L_\infty$-algebroid, we only have to specify what happens when we pair the constant extension of $X \in K_1 = \mf {sl}(V)$ with the constant extension of $\mu \in \wedge ^n V^\ast$. Due to the requirement that the differential is a derivation of the binary bracket, there is only one choice for this: We set
\begin{equation}\label{eq:bracketsln2}
[X, \mu] := 0 \in \Gamma(\wedge^nV^\ast).
\end{equation}
We then obtain:
\begin{prop}\label{prop:slbracket}
The binary operation defined by the restriction of \eqref{eq:glnbracket} on the spaces $\Gamma(K_i)$, together with the extension of \eqref{eq:bracketsln2} defines a dg-Lie algebroid structure on the resolution \eqref{diag:ressln} of $\mc F_\mu(V)$, which is a universal $L_\infty$-algebroid of $\mc F_\mu$, which is minimal at the origin.
\end{prop}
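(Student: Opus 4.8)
The plan is to obtain the dg-Lie algebroid structure of \eqref{diag:ressln} almost entirely by restriction from $L_\infty(\mc F_0)$, and then to verify by hand the few structure equations that genuinely involve the extra top term $\Gamma(\wedge^n V^\ast)$ sitting in degree $-(n-1)$. Since everything below that degree lives inside $\Gamma(\wedge^\bullet V^\ast\otimes V)$, where we already have a dg-Lie algebroid, the work is concentrated in one degree.

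First I would observe that in degrees $0$ through $-(n-2)$ the resolution \eqref{diag:ressln} is exactly the graded subspace $\Gamma(K_\bullet)\subset\Gamma(\wedge^\bullet V^\ast\otimes V)$, that $d_\bullet$ restricts to it (Proposition \ref{prop:ressl}), and that the Nijenhuis--Richardson bracket \eqref{eq:glnbracket} restricts to it (Lemma \ref{lem:bracketrestr}). Moreover a bracket of two such elements whose bidegree $k_1+k_2-1$ equals $n$ lies in $K_n=\ker\phi_n=0$, hence vanishes (so it is unambiguously $0$ whether its target is read as $\Gamma(\wedge^nV^\ast\otimes V)$ or $\Gamma(\wedge^nV^\ast)$), and one with $k_1+k_2-1>n$ vanishes because $\wedge^{k_1+k_2-1}V^\ast=0$. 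Consequently every graded Jacobi, $\ell_1$-derivation, anchor-morphism and Leibniz identity involving only elements of $\Gamma(K_1),\dots,\Gamma(K_{n-1})$ is the restriction of the corresponding identity in the dg-Lie algebroid $L_\infty(\mc F_0)$ --- each term being inherited or zero --- and therefore holds. In particular the anchor is $\rho_\mu=\rho|_{\Gamma(K_1)}$, its image is $\mc F_\mu(V)$ by definition of $\mc F_\mu$ and of $K_1=\mf{sl}(V)$, and $\rho_\mu\circ d_2=0$ is inherited.

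Next I would treat the top term $T:=\Gamma(\wedge^nV^\ast)$ placed in degree $-(n-1)$. A degree count shows the only structure maps not yet accounted for are the differential $d_n\phi_n^{-1}\partial_n\colon T\to\Gamma(K_{n-1})$ and binary brackets with at least one entry in $T$; among the latter, $\ell_2(\Gamma(K_i),T)$ vanishes for degree reasons unless $i=1$, and $\ell_2(T,T)$ lands below the complex and vanishes. So the genuine checks are: (a) $\ell_1^2=0$ at the top, i.e. $d_{n-1}\circ(d_n\phi_n^{-1}\partial_n)=0$, which is contained in the exactness proven in Proposition \ref{prop:ressl}; (b) the graded Jacobi identity with two entries $X,Y\in\Gamma(\mf{sl}(V))=\Gamma(K_1)$ and one entry $f\mu\in T$, which --- expanding via $\ell_2(X,\mu):=0$ and $\ell_2(X,f\mu)=\rho_\mu(X)(f)\,\mu$ --- collapses to $\rho_\mu([X,Y])=[\rho_\mu(X),\rho_\mu(Y)]$, true because $\rho$ is a morphism for $L_\infty(\mc F_0)$; and (c) the $\ell_1$-derivation identity $\ell_1\ell_2(X,f\mu)=\ell_2(X,\ell_1(f\mu))$ for $X\in\Gamma(K_1)$ (the term $\ell_2(\ell_1 X,f\mu)$ is absent since $\ell_1|_{E_0}=0$). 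For (c) one computes $\ell_1(f\mu)=d_n\phi_n^{-1}\partial_n(f\mu)=(-1)^n f\,\iota_E(\mu)\otimes E$ with $E=x^i\partial_{x^i}$ the Euler vector field; then, using that $\ell_1$ is $C^\infty(V)$-linear and the Leibniz rule for the Nijenhuis--Richardson bracket, both sides reduce to $(-1)^n\rho_\mu(X)(f)\,\iota_E(\mu)\otimes E$ once one knows the single identity
\[
[X,\mu\otimes E]_{\mathrm{NR}}=(\Tr X)\,\mu\otimes E ,
\]
which vanishes for traceless $X$; equivalently $[X,\iota_E(\mu)\otimes E]_{\mathrm{NR}}=(\Tr X)\,\iota_E(\mu)\otimes E$, which follows from it by the $\ell_1$-derivation of $L_\infty(\mc F_0)$. (Conceptually this just records that $X\in\mf{sl}(V)$ acts on the tensor $\mu\otimes E$ by the Lie derivative, with $\mathcal{L}_X E=0$ for linear $X$ and $\mathcal{L}_X\mu=(\Tr X)\mu$.) All further axioms touching $T$ --- the remaining $\ell_1$-derivation and Leibniz identities pairing a $\Gamma(K_i)$-entry ($i\geq 2$) with a $T$-entry, and Jacobi identities with more than one $T$-entry or with a $T$-entry together with a degree $\leq -1$ entry --- are trivial, since in each such identity every term either lands in a zero space, or lands in $K_n=0$, or is the anchor of a $d_2$-exact element.

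This assembles a dg-Lie algebroid structure --- equivalently an $L_\infty$-algebroid structure with $\ell_k=0$ for $k\geq 3$ --- on \eqref{diag:ressln} whose anchor generates $\mc F_\mu$. Universality is then automatic: \eqref{diag:ressln} is a geometric resolution of $\mc F_\mu$ (Proposition \ref{prop:ressl}), and any $L_\infty$-algebroid structure carried by a geometric resolution of a singular foliation is universal by the results of \cite{univlinfty}. Minimality at the origin holds because each $d_j$ ($j=2,\dots,n-1$) is contraction with $x^i\partial_{x^i}$ and so has coefficients vanishing at $0$, and $d_n\phi_n^{-1}\partial_n$ likewise vanishes at $0$ since $\partial_n$ and $d_n$ are again contractions with $x^i\partial_{x^i}$ while $\phi_n^{-1}$ is position-independent. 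The step I expect to be the main obstacle is item (c): pushing the identification through the isomorphism $\phi_n$ with correct signs and verifying $[X,\mu\otimes E]_{\mathrm{NR}}=(\Tr X)\,\mu\otimes E$; the rest is either inherited verbatim from $L_\infty(\mc F_0)$ or forced by degree reasons, provided one isolates carefully which of the dg-Lie algebroid equations genuinely touch the top term.
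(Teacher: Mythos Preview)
Your proposal is correct and follows the same route the paper indicates: the paper states Proposition \ref{prop:slbracket} without proof, after remarking that the bracket on $\Gamma(K_\bullet)$ is the restriction of \eqref{eq:glnbracket} (Lemma \ref{lem:bracketrestr}) and that the derivation property forces $[X,\mu]=0$; your case analysis simply supplies the omitted verification. The one substantive computation you isolate, the identity $[X,\mu\otimes E]_{\mathrm{NR}}=(\Tr X)\,\mu\otimes E$ (and its consequence for $\iota_E(\mu)\otimes E$ via the $\ell_1$-derivation of $L_\infty(\mc F_0)$), is exactly what is needed to close item (c), and your sign-tracking through $d_n\phi_n^{-1}\partial_n$ is consistent with the paper's conventions $\partial_\bullet=-\iota_E$ and $\phi_n(\mu\otimes v)=(-1)^{n-1}\iota_v\mu$.
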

\subsection{Vector fields preserving the linear symplectic form}\label{sec:linsymp}
Next, we consider the symplectic Lie algebra. Given a vector space $V$ of even dimension $n$, and a non-degenerate skew-symmetric bilinear map $\omega:V\times V \to \mb R$, we consider the Lie subalgebra of $\mf {gl}(V)$ preserving $\omega$:
\begin{defn}
Let $(V,\omega)$ be a symplectic vector space. The \emph{symplectic Lie algebra} is the Lie subalgebra of $\mf {gl}(V)$ given by
\begin{equation}\label{eq:sympliealg}
    \mf{sp}(V,\omega):= \{A\in \mf {gl}(V)\mid \omega(Ax,y) + \omega(x,Ay) = 0\,\,\, \forall x,y \in V\},
\end{equation}
\end{defn}
By restricting the anchor $\rho$ to $\Gamma(\mf {sp}(V,\omega))$, we obtain a singular foliation 
$$
\mc F_\omega(V):= \rho(\Gamma(\mf {sp}(V,\omega)).
$$
In section \ref{sec:projressp} we construct a projective resolution of $\mc F_\omega(V)$ (Proposition \ref{prop:spres}). In section \ref{sec:bracketssp} we construct a part of the $L_\infty$-algebroid structure. We give an expression for the binary bracket (Proposition \ref{prop:spbracket}), depending on a choice of left inverse $r^\omega$ of an injective cochain map. This bracket does not satisfy the Jacobi identity, so we give an expression for the ternary bracket, which serves as a contracting homotopy for the Jacobiator. In appendix \ref{appendix1} we investigate whether $r^\omega$ can be chosen to be a cochain map in some degrees, which would simplify the binary bracket. We show that when $\dim (V) = 4$, $r^\omega$ can not be chosen as a cochain map in any degree (Proposition \ref{prop:nosplitdiff}).
\subsubsection{The projective resolution}\label{sec:projressp}
As before, we first construct the projective resolution of the foliation $\mc F_\omega = \rho(\Gamma(\mf {sp}(V,\omega)))$ on $V$. The starting point is the same as for $\mf {sl}(V)$, but the rest of the approach will be quite different, as the analog of the map $\phi$ is not surjective in negative degrees. 
First consider the map $\phi_1^\omega:\mf {gl}(V) \to \wedge ^2 V^\ast$ given by
$$
\phi_1^\omega(A) = A\cdot \omega
$$
for $A\in \mf {gl}(V)$, where for $x,y \in V$,
$$
(A\cdot\omega)(x,y) = \omega(Ax,y) + \omega(x,Ay).
$$
The next step is to extend this map to the entire dg-Lie algebra $\Gamma(\wedge^\bullet V^\ast \otimes V)$, as in \eqref{eq:resgln} and \eqref{eq:glnbracket}. This immediately raises the question what the codomain should be. We define for $p = 1,\dots, n$ 
$$
\phi^\omega_p:\wedge^p V^\ast \otimes V \to \wedge^{p-1} V^\ast \otimes \wedge^2V^\ast
$$
by
$$
\phi^\omega_p(\alpha\otimes X) := (-1)^{p-1}\iota_{e_i}(\alpha)\otimes e^i\wedge \iota_X\omega,
$$
where $\{e_i\}_{i = 1}^n$ and $\{e^i\}_{i =1}^n$ are dual bases of $V$ and $V^\ast$ respectively. When viewing the domain and codomain of $\phi_p^\omega$ as $\Hom(\wedge^p V,V)$ and $\Hom(\wedge^{p-1} V, \wedge^2 V^\ast)$ respectively, the map $\phi^\omega_p$ can equivalently be described as 
$$
\phi^\omega_p(\psi)(v_1,\dots,v_{p-1}) = \psi(v_1,\dots,v_{p-1},-)\cdot\omega
$$
for $\psi \in \Hom(\wedge^p V,V)$, $v_1,\dots, v_{p-1}\in V$.\\
We now equip the graded $C^\infty(V)$-module $\Gamma(\wedge^{\bullet} V^\ast \otimes \wedge^2 V^\ast)$
with the differential
$$
\partial_p: \Gamma(\wedge^p V^\ast \otimes \wedge^2 V^\ast) \to \Gamma(\wedge^{p-1} V^\ast \otimes \wedge^2 V^\ast)
$$
given by
$$
\partial_p(\alpha\otimes \tau) := -x^i\iota_{e_i}(\alpha)\otimes \tau.
$$
Here the grading is chosen as
$$
\Gamma(\wedge^\bullet V^\ast \otimes \wedge^2 V^\ast)^p = \Gamma(\wedge^{p+1} V^\ast \otimes \wedge^2 V^\ast).
$$
Finally, there is an action of the dg-Lie algebra $\Gamma(\wedge^\bullet V^\ast \otimes V)$ on $\Gamma(\wedge^\bullet V^\ast \otimes \wedge^2 V^\ast)$: for $\alpha\otimes X \in \wedge^p V^\ast \otimes V, \beta \otimes \tau \in \wedge ^q V^\ast \otimes \wedge^2 V^\ast$, we set
\begin{equation}\label{eq:liealgactsympl}
(\alpha\otimes X) \cdot (\beta\otimes \tau) := (-1)^{p-1}\alpha\iota_X(\beta)\otimes \tau + \iota_{e_i}(\alpha)\beta\otimes e^i\wedge \iota_X(\tau) 
\end{equation}
for constant sections, and extend it to non constant sections using the Leibniz rule with respect to the anchor of $\Gamma(\mf{gl}(V))$. 
Note that by multiplying by $-(-1)^{(p-1)(q-1)}$, we can turn this into a right action. 
We have the following lemma summarizing the properties of the data above, of which the proof is a direct computation.
\begin{lem}\label{lem:propspo}
\begin{itemize}
    \item[]
    \item[i)] $\phi_1^\omega$ is surjective, and $\ker(\phi_1^\omega) = \mf {sp}(V,\omega)$.
    \item [ii)] For $k\geq 2$, $\phi_k^\omega$ is injective. In particular, $\phi^\omega_2$ is an isomorphism.
    \item[iii)] $\phi^\omega$ is a cochain map of degree $-1$, i.e. we have $$ \phi^\omega_p d_{p+1} + \partial_p \phi^\omega_{p+1} = 0.$$
    \item[iv)] The operation defined in \eqref{eq:liealgactsympl} is a dg-Lie algebra action. Consequently, there is a dg-Lie algebra structure on $\Gamma(\wedge^\bullet V^\ast \otimes V) \oplus \Gamma(\wedge^\bullet V^\ast \otimes \wedge^2 V^\ast)$ encoding this action. \\For $p \geq 0$, the degree $p$-part is given by $\Gamma(\wedge^{p+1} V^\ast \otimes V) \oplus \Gamma(\wedge^{p+1} V^\ast \otimes \wedge^2 V^\ast)$, and the differential is given by $d+ \partial$.
    \item[v)] $\phi^\omega$ is a derivation of the bracket on $\Gamma(\wedge^\bullet V^\ast \otimes V) \oplus \Gamma(\wedge^\bullet V^\ast \otimes \wedge^2 V^\ast)$: for $\alpha\otimes X \in \Gamma(\wedge^pV^\vee \otimes V), \beta \otimes Y \in \Gamma(\wedge^q V^\vee \otimes V)$, we have the equality 
    \begin{equation}\label{eq:phideriv}
        \phi_{p+q-1}^\omega([\alpha\otimes X,\beta\otimes Y]) = [\phi_p^\omega(\alpha\otimes X), \beta \otimes Y] + (-1)^{p-1}[\alpha\otimes X, \phi^\omega_q(\beta\otimes Y)].
    \end{equation}
\end{itemize}
\end{lem}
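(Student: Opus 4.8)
The plan is to verify the five items from the explicit formulas, organizing the work so the one genuinely non-mechanical point is isolated. For \textbf{i)} I would translate to bilinear forms: non-degeneracy of $\omega$ makes $A\mapsto b_A$, with $b_A(x,y):=\omega(Ax,y)$, a linear isomorphism $\mathfrak{gl}(V)\xrightarrow{\sim}V^\ast\otimes V^\ast$, and under it $\phi_1^\omega$ becomes $b\mapsto b-b^{\mathrm{op}}$ (with $b^{\mathrm{op}}(x,y)=b(y,x)$), i.e. twice the antisymmetrization projector onto $\wedge^2V^\ast$. That projector is surjective and has kernel $\mathrm{Sym}^2V^\ast$, whose preimage is precisely $\{A:\omega(Ax,y)=\omega(Ay,x)\}=\mathfrak{sp}(V,\omega)$ by \eqref{eq:sympliealg}. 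For \textbf{ii)}, suppose $\phi_k^\omega(\psi)=0$ with $k\geq 2$ and put $T(v_1,\dots,v_{k-1},w,w'):=\omega\big(\psi(v_1,\dots,v_{k-1},w),w'\big)$. Then $T$ is alternating in $(v_{k-1},w)$ because $\psi\in\Hom(\wedge^kV,V)$, and symmetric in $(w,w')$ because, by the equivalent description of $\phi_k^\omega$ together with i), $\psi(v_1,\dots,v_{k-1},-)\in\mathfrak{sp}(V,\omega)$. Applying these two transpositions alternately to the last three slots, six times in all, turns $T$ into $-T$, so $T\equiv 0$, and non-degeneracy of $\omega$ forces $\psi=0$. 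For $k=2$ this exhibits $\phi_2^\omega$ as an injection between spaces of equal dimension $n\binom{n}{2}$, hence an isomorphism.

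For \textbf{iii)} it suffices to check on constant sections, since $\phi^\omega$, $d$ and $\partial$ are all $C^\infty(V)$-linear (contraction with a fixed vector field is $C^\infty$-linear). Expanding $\phi_p^\omega d_{p+1}(\alpha\otimes X)$ and $\partial_p\phi_{p+1}^\omega(\alpha\otimes X)$ yields two sums of the shape $(-1)^{p-1}\sum_{i,j}x^j\,\iota_{e_\bullet}\iota_{e_\bullet}(\alpha)\otimes\big(e^i\wedge\iota_X\omega\big)$ with the two contractions in opposite orders and with the same overall sign (using $(-1)^{p-1}=(-1)^{p+1}$); they cancel because $\iota_{e_i}\iota_{e_j}=-\iota_{e_j}\iota_{e_i}$.

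For \textbf{iv)} I would invoke the standard semidirect-product recipe: if a dg-Lie algebra $\mathfrak g$ — here $\big(\Gamma(\wedge^\bullet V^\ast\otimes V),d,[\,\cdot\,,\cdot\,]\big)$ with the bracket \eqref{eq:glnbracket} — acts on a complex $(M,\partial)$ — here $\big(\Gamma(\wedge^\bullet V^\ast\otimes\wedge^2V^\ast),\partial\big)$, with $\partial^2=0$ immediate and the grading as declared — compatibly with differentials and by a degree-zero graded Lie action, then $\mathfrak g\oplus M$ is a dg-Lie algebra with differential $d\oplus\partial$, bracket $[(a,m),(b,n)]=\big([a,b],\,a\cdot n-(-1)^{|a||b|}b\cdot m\big)$, and the stated degrees. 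The remaining content is the two identities $\partial(a\cdot m)=(da)\cdot m+(-1)^{|a|}a\cdot(\partial m)$ and $[a,b]\cdot m=a\cdot(b\cdot m)-(-1)^{|a||b|}b\cdot(a\cdot m)$, both direct computations from \eqref{eq:glnbracket} and \eqref{eq:liealgactsympl}; the second is organized most cleanly by observing that \eqref{eq:liealgactsympl} is the natural Lie-derivative action of $V$-valued forms on $\wedge^2V^\ast$-valued forms, combining the form-part action with the standard $\mathfrak{gl}(V)$-action $(A\cdot\tau)(x,y)=-\tau(Ax,y)-\tau(x,Ay)$ on $\wedge^2V^\ast$, under which $e^i\wedge\iota_X\tau=-(e^i\otimes X)\cdot\tau$. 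For \textbf{v)} I would expand both sides of \eqref{eq:phideriv}, using \eqref{eq:glnbracket} on the left and, on the right, \eqref{eq:liealgactsympl} for the brackets (which are this action, since $\phi^\omega$ lands in the $\wedge^2V^\ast$-summand): the tensorial terms match via the co-Leibniz identity for $\gamma\mapsto\iota_{e_i}(\gamma)\otimes e^i$, and the anchor-derivative terms — the ones carrying $\delta_{k_1,1}$ — match because $\phi^\omega$ is $C^\infty(V)$-linear and commutes with the anchor.

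The main obstacle is purely bookkeeping: iv) and v) combine several sign conventions (the degree shift on $\Gamma(\wedge^\bullet V^\ast\otimes\wedge^2V^\ast)$, the $(-1)^{p-1}$ in $\phi_p^\omega$, and the Koszul signs in \eqref{eq:glnbracket} and \eqref{eq:liealgactsympl}), and it is only keeping these consistent through the expansions that takes real effort; by contrast the one step requiring an actual idea, the injectivity in ii), is short once one spots the clash between the alternation of $\psi$ and the symmetry forced by $\mathfrak{sp}(V,\omega)$.
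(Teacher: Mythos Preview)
Your proposal is correct and matches the paper's approach, which is simply the one-line assertion that ``the proof is a direct computation.'' You have in fact supplied considerably more than the paper does: the antisymmetrization description in i), the $T=-T$ cyclic argument for injectivity in ii), and the organized outline of the semidirect-product verification in iv)--v) are all sound and make explicit what the paper leaves to the reader.
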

\begin{cor}
By property iii), the differentials $d_\bullet$ and $\partial_\bullet$ restrict and descend to the kernel and cokernel of $\phi^\omega$ respectively. We denote the differential induced by $\partial_\bullet$ on $\Gamma(\coker(\phi^\omega))$ by $\bar{\partial_\bullet}$.
\end{cor}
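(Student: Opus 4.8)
The plan is to observe that the statement is a purely formal consequence of the cochain-map identity $\phi^\omega_p d_{p+1} + \partial_p \phi^\omega_{p+1} = 0$ of Lemma \ref{lem:propspo} iii), obtained via two short diagram chases.

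For the first assertion, that $d_\bullet$ restricts to $\Gamma(\ker(\phi^\omega))$: I would take $\psi \in \Gamma(\ker(\phi^\omega_p))$, so that $\phi^\omega_p(\psi) = 0$, and apply the identity of Lemma \ref{lem:propspo} iii) with $p$ replaced by $p-1$, obtaining $\phi^\omega_{p-1}(d_p(\psi)) = -\partial_{p-1}(\phi^\omega_p(\psi)) = 0$. Hence $d_p(\psi) \in \Gamma(\ker(\phi^\omega_{p-1}))$, so $d_\bullet$ preserves the graded subspace $\Gamma(\ker(\phi^\omega)) \subset \Gamma(\wedge^\bullet V^\ast \otimes V)$; since $d^2 = 0$ on the ambient complex, the restriction is again a differential, which is the claimed subcomplex.

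For the second assertion, that $\partial_\bullet$ descends to $\Gamma(\coker(\phi^\omega))$: it suffices to check that $\partial_\bullet$ preserves the graded subspace $\Gamma(\im(\phi^\omega)) \subset \Gamma(\wedge^\bullet V^\ast \otimes \wedge^2 V^\ast)$, for then it induces a well-defined operator $\bar{\partial}_\bullet$ on the quotient $\Gamma(\coker(\phi^\omega))$. Here I would also record that $\phi^\omega$ is a vector-bundle map of locally constant rank by Lemma \ref{lem:propspo} i) and ii), so that $\coker(\phi^\omega)$ is again a vector bundle and $\Gamma$ commutes with taking its cokernel. Given $\eta = \phi^\omega_{p+1}(\psi)$ with $\psi \in \Gamma(\wedge^{p+1} V^\ast \otimes V)$, the same identity yields $\partial_p(\eta) = -\phi^\omega_p(d_{p+1}(\psi)) \in \Gamma(\im(\phi^\omega_p))$, as required, and $\bar{\partial}_\bullet$ is again a differential, inherited from $\partial^2 = 0$.

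The only points needing care are bookkeeping: one must check that the two grading conventions — namely $\Gamma(\wedge^\bullet V^\ast \otimes \wedge^2 V^\ast)^p = \Gamma(\wedge^{p+1} V^\ast \otimes \wedge^2 V^\ast)$ against the degree $+1$ convention for $d_\bullet$ — are compatible, so that $\phi^\omega$ is genuinely of degree $-1$ and the induced operators land in the expected degrees, and one should recall that $\partial^2 = 0$ since $\partial_\bullet$ is, up to sign, the Koszul differential on $\Gamma(\wedge^\bullet V^\ast)$ tensored with the identity of $\wedge^2 V^\ast$. There is no substantive obstacle: all the content is packaged in Lemma \ref{lem:propspo} iii), and the rest is the routine chase above.
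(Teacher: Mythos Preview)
Your proposal is correct and matches the paper's approach: the paper states this corollary without proof, treating it as an immediate formal consequence of the anti-commutation relation in Lemma~\ref{lem:propspo}~iii), and your write-up is precisely the standard diagram chase making that implication explicit.
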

Using these properties, we can construct a projective resolution:
\begin{prop}\label{prop:spres}
For $i = 3,\dots, n+1$, let $C_i$ be defined as $C_i := \coker(\phi_i^\omega)$. The sequence
\begin{equation}\label{diag:resspo}
\begin{tikzcd}
0\arrow{r}& \Gamma(C_{n+1})  \arrow{r}{\bar{\partial}_n} &\Gamma(C_n)\arrow{r}{\bar{\partial}_{n-1}}& \dots \arrow{r}{\bar{\partial}_3} & \Gamma(C_3) \arrow{r}{d_2(\phi^\omega_2)\inv \partial_2}& \Gamma(\mf{sp}(V,\omega)) \arrow{r} &\mc F_\omega(V)\arrow{r}& 0 
\end{tikzcd},
\end{equation}
is exact. Here $\phi_{n+1}^\omega:0 \to \Gamma(\wedge^n V^\ast \otimes \wedge^2 V^\ast)$ is understood to be the zero map.
\end{prop}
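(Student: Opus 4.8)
The plan is to run a diagram chase parallel to the proof of Proposition~\ref{prop:ressl}, but now working with the cokernels of the injective maps $\phi^\omega_k$ ($k\ge 2$) in place of kernels of surjective maps. The setup is the ladder whose top row is the Koszul-type resolution~\eqref{eq:resgln} of $\mc F_0(V)$, whose bottom row is the complex $(\Gamma(\wedge^\bullet V^\ast\otimes\wedge^2 V^\ast),\partial_\bullet)$ — exact in the relevant degrees, since by Lemma~\ref{lem:koszulexact} it is the Koszul complex tensored over $C^\infty(V)$ with the trivial bundle $\wedge^2 V^\ast$ — and whose vertical maps are the components of $\phi^\omega$. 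By Lemma~\ref{lem:propspo} the $\phi^\omega_k$ are injective for $k\ge 2$, so each $C_k=\coker(\phi^\omega_k)$ is a genuine quotient of $\Gamma(\wedge^{k-1}V^\ast\otimes\wedge^2 V^\ast)$; $\phi^\omega_2$ is an isomorphism; $\phi^\omega_1$ is surjective with kernel $\Gamma(\mf{sp}(V,\omega))$; and $\phi^\omega$ is a cochain map, so $\partial_\bullet$ descends to the claimed differentials $\bar\partial_\bullet$ on the quotients.

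Before the chases I would record that the map $\delta:=d_2(\phi^\omega_2)\inv\partial_2$ is well-defined on $\Gamma(C_3)$ and takes values in $\Gamma(\mf{sp}(V,\omega))=\ker(\phi^\omega_1)$. Both are one-line consequences of the cochain-map identity of Lemma~\ref{lem:propspo}(iii) together with $d^2=0$ and $\partial^2=0$: replacing a representative $\eta$ by $\eta+\phi^\omega_3(\zeta)$ changes $(\phi^\omega_2)\inv\partial_2\eta$ by $-d_3(\zeta)$, which $d_2$ kills, and $\phi^\omega_1 d_2(\phi^\omega_2)\inv\partial_2\eta=-\partial_1\partial_2\eta=0$.

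The heart of the argument is exactness at the four kinds of spots, each a short chase through the ladder. Exactness at $\mc F_\omega(V)$ is the definition $\mc F_\omega(V)=\rho(\Gamma(\mf{sp}(V,\omega)))$. For exactness at $\Gamma(\mf{sp}(V,\omega))$: if $\rho(X)=0$ with $X\in\Gamma(\mf{sp}(V,\omega))$ then $X=d_2(\psi)$ by exactness of~\eqref{eq:resgln}, the hypothesis $\phi^\omega_1(X)=0$ reads $\partial_1(\phi^\omega_2\psi)=0$, so $\phi^\omega_2\psi=\partial_2(\eta)$ by exactness of the bottom row, whence $X=\delta([\eta])$. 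For exactness at $\Gamma(C_i)$ with $4\le i\le n+1$ — which in particular gives injectivity of $\bar\partial_n$ — take a representative $c$ of an element of $\ker\bar\partial_{i-1}$, so $\partial_{i-1}(c)=\phi^\omega_{i-1}(\psi)$; applying $\partial_{i-2}$ and using injectivity of $\phi^\omega_{i-2}$ (this is where $i\ge 4$ is used) forces $d_{i-1}(\psi)=0$, hence $\psi=d_i(\zeta)$ by exactness of~\eqref{eq:resgln}; then $c+\phi^\omega_i(\zeta)$ lies in $\ker\partial_{i-1}$, hence equals $\partial_i(c')$ by exactness of the bottom row, and $[c]=\bar\partial_i([c'])$. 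The case $i=3$ is the same chase with the condition "$\partial_{i-1}(c)\in\im\phi^\omega_{i-1}$" replaced by "$\delta([c])=0$", which again yields $(\phi^\omega_2)\inv\partial_2(c)=d_3(\zeta)$ and hence $[c]=\bar\partial_3([c'])$.

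Equivalently, the whole statement is the long exact sequence of $0\to\im\phi^\omega\to(\Gamma(\wedge^\bullet V^\ast\otimes\wedge^2 V^\ast),\partial)\to\Gamma(C_\bullet)\to 0$: above the bottom degree $\im\phi^\omega$ is identified by $\phi^\omega$ with the truncation of~\eqref{eq:resgln}, so its cohomology is concentrated in one degree, where it equals $\im(d_2)\cap\Gamma(\mf{sp}(V,\omega))=\ker(\rho|_{\Gamma(\mf{sp}(V,\omega))})$, and $\delta$ realizes the connecting homomorphism. I expect the only delicate point to be the bottom of the resolution: verifying that $\bar\partial_3$, $\delta$, and $\rho|_{\Gamma(\mf{sp}(V,\omega))}$ splice correctly, and checking that the low-dimensional cases — notably $n=2$, where $\mf{sp}(V,\omega)=\mf{sl}(V)$ and the sequence has length one — are not degenerate. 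The middle of the complex is routine once the ladder and Lemma~\ref{lem:propspo} are in place.
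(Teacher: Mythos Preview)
Your proposal is correct and follows essentially the same diagram chase as the paper's proof: the same ladder with rows~\eqref{eq:resgln} and the Koszul complex tensored with $\wedge^2 V^\ast$, the same use of injectivity of $\phi^\omega_{i-2}$ for $i\ge 4$ to lift to the top row, and the same splicing at the bottom via $d_2(\phi^\omega_2)^{-1}\partial_2$. You are in fact slightly more thorough than the paper, which leaves the well-definedness of $\delta$ on $\Gamma(C_3)$ and the injectivity of $\bar\partial_n$ implicit; your long-exact-sequence reformulation is a nice conceptual addendum but not a different method.
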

\begin{proof}
We start by proving exactness at $\Gamma(C_p)$ for $p = 4,\dots, n+1$. Consider the diagram
\[
\begin{tikzcd}
\Gamma(\wedge^{p+1} V^\ast \otimes V) \arrow{r}{d_{p+1}} \arrow{d}{\phi_{p+1}^\omega}& \Gamma(\wedge^p V^\ast \otimes V) \arrow{r}{d_{p}}\arrow{d}{\phi_p^\omega}& \Gamma(\wedge^{p-1} V^\ast \otimes V) \arrow{d}{\phi^\omega_{p-1}} \arrow{r}{d_{p-1}} & \Gamma(\wedge^{p-2}V^\ast \otimes V) \arrow{d}{\phi_{p-2}^\omega}\\
\Gamma(\wedge^{p} V^\ast \otimes \wedge^2 V^\ast) \arrow{d}\arrow{r}{\partial_p} & \Gamma(\wedge^{p-1} V^\ast \otimes \wedge^2 V^\ast) \arrow{r}{\partial_{p-1}} \arrow{d}& \Gamma(\wedge^{p-2} V^\ast \otimes \wedge^2 V^\ast) \arrow{r}{\partial_{p-2}} \arrow{d} & \Gamma(\wedge^{p-3} V^\ast \otimes \wedge^2 V^\ast)\arrow{d}\\
\Gamma(C_{p+1}) \arrow{r}{\overline{\partial_{p}}} & \Gamma(C_p) \arrow{r}{\overline{\partial_{p-1}}}& \Gamma(C_{p-1}) \arrow{r}{\overline{\partial_{p-2}}} & \Gamma(C_{p-2}) 
\end{tikzcd}
\]
For $\tau\in \Gamma(\wedge^{p-1} V^\ast \otimes \wedge^2 V^\ast)$, assume that there exists $X\in \Gamma(\wedge^{p-1} V^\ast \otimes V)$ such that
$$
\partial_{p-1}(\tau) = \phi_{p-1}^\omega(X).
$$
Then 
$$
\phi_{p-2}^\omega(d_{p-1}(X)) = - \partial_{p-2}(\phi_{p-1}^\omega(X)) = 0,
$$
and by injectivity of $\phi_{p-1}^\omega$, it follows that $d_{p-1}(X) = 0$. By exactness of \eqref{eq:resgln} we find $X = d_{p}(Y) $. Now
$$
\partial_{p-1}(\tau + \phi_{p}^\omega(Y)) = \phi_{p-1}^\omega(X) - \phi_{p-1}^\omega(X) = 0,
$$
so by exactness of \eqref{eq:koszexact2}, with $W = \wedge^2 V^\ast$, we find that there exists $\mu \in \Gamma(\wedge^p V^\ast \otimes \wedge^2V^\ast)$ such that $$\tau = \partial_p(\mu) - \phi_{p}^\omega(Y),$$ showing that the class of $\tau$ modulo the image of $\phi^\omega$ is a coboundary.\\

For exactness at $\Gamma(C_3)$, assume that for $\tau\in \Gamma(\wedge^2V^\ast \otimes \wedge^2V^\ast)$ such that
$$
d_2(\phi_2^\omega)^{-1}\partial_2(\tau) = 0.
$$
Then there exists $X\in \Gamma(\wedge^3 V^\ast \otimes V)$ with
$$
(\phi_2^\omega)^{-1}\partial_2 (\tau) = d_3(X),
$$
or 
$$
\partial_2(\tau + \phi_2^\omega(X)) = 0, 
$$
which implies that there exists $\mu\in \Gamma(\wedge^3 V^\ast \otimes \wedge^2 V^\ast)$ such that
$$
\partial_3(\mu) = \tau+\phi_2^\omega(X),
$$
so the class of $\tau$ module the image of $\phi^\omega$ is a coboundary.\\
Finally, by definition of $\mc F_\omega$ the anchor restricted to $\mf {sp}(V,\omega)$ is surjective, so it suffices to show that the kernel is precisely $(d_2(\phi_2^\omega)^{-1}\partial_2)(\Gamma(\wedge^2V^\ast \wedge^2 V^\ast)$. Let $A\in \Gamma(\mf{sp}(V,\omega))$ such that $$\rho_\omega(A) = 0.$$ Then $A = d_2(X)$ for some $X\in \Gamma(\wedge^2 V^\ast \otimes V)$. As $\phi_2^\omega$ is an isomorphism, we have
$$
A = d_2(\phi_2^\omega)^{-1} \phi_2^\omega(X).
$$
As
$$
\partial_1\phi_2^\omega(X) =- \phi_1^\omega(d_2(X)) = \phi_1^\omega(A) = 0,
$$
it follows that $\phi_2^\omega(X) = \partial(\mu)$ for some $\mu \in \Gamma(\wedge^2 V^\ast \otimes \wedge^2 V^\ast)$, and 
$$
A = d_2(\phi_2^\omega)^{-1}\partial_2(\mu), 
$$
concluding the proof.
\end{proof}
\subsubsection{The (partial) $L_\infty$-algebroid structure}\label{sec:bracketssp}
In this section we construct part of an $L_\infty$-algebroid structure on the resolution \eqref{diag:resspo}. Due to the algebraic structures present, there is a canonical Lie bracket $\{-,-\}$ on the graded $C^\infty(V)$-module underlying the resolution \eqref{diag:resspo}. It is given by
$$
\{A,B\} = [A,B]
$$
for $A,B\in \Gamma(\mf {sp}(V,\omega))$, where $[-,-]$ is the usual bracket, and
$$
\{A,\omega_k + \text{im}(\phi^\omega_{k+1})\} = [A, \omega_k] + \text{im}(\phi^\omega_{k+1})
$$
for $A\in \Gamma(\mf {sp}(V,\omega)), \omega_k \in \Gamma(\coker(\phi^\omega_{k+1}))$. Here the bracket $[-,-]$ on the right hand side is the semi-direct product bracket as described in Lemma \ref{lem:propspo}iv).
\begin{rmk}
One way to see that $\{-,-\}$ is well-defined, is by forgetting the differentials $d_\bullet$ and $\partial_\bullet$, and viewing $(\Gamma(\wedge^\bullet V^\ast) \oplus \Gamma(\wedge^\bullet V^\ast \otimes \wedge^2 V^\ast), \phi^\omega)$ as a dg-Lie algebra. The resolution \eqref{diag:resspo} of $\mc F_\omega(V)$ is then precisely the cohomology of this dg-Lie algebra. Consequently, the bracket equips the graded module with a graded Lie algebra structure.
\end{rmk}
When at least one of the entries of $\{-,-\}$ has degree 0, the differential in \eqref{diag:resspo} is a derivation of $\{-,-\}$. However, for elements $\omega_1,\omega_2 \in \Gamma(\coker(\phi^\omega_{3}))$, for the differential in \eqref{diag:resspo} to be a derivation of the bracket, the equation
\begin{equation}\label{eq:dercheck}
[d_2(\phi_2^\omega)^{-1}\partial_2 \omega_1, \omega_2] - [\omega_1,d_2(\phi_2^\omega)^{-1}\partial_2 \omega_2] = \overline{\partial_3}[\omega_1,\omega_2] = 0 \in \Gamma(C_4)
\end{equation}
must hold. This means that the expression \eqref{eq:dercheck} must lie in the image of $\phi_4^\omega$, but one can check that this is not the case: the binary operation $\{-.-\}$ therefore does not equip the resolution \eqref{diag:resspo} with a $L_\infty$-algebroid structure, as the differential is not a derivation of the binary bracket.\\
To rectify this, we modify $\{-.-\}$ to obtain a new binary operation $\llbracket-,- \rrbracket$ on the resolution \eqref{diag:resspo}, for which the differential is a derivation.\\
Before we define this binary operation, we make a choice of left inverse of the map 
$$
\phi_p^\omega:\Gamma(\wedge^p V^\ast \otimes V) \to \Gamma(\wedge^{p-1} V^\ast \otimes \wedge^2 V^\ast)
$$
for $p\geq 2$.
Define
$$
r_p^\omega: \wedge^{p} V^\ast \otimes \wedge^2 V^\ast \to \wedge^{p+1} V^\ast \otimes V
$$
by 
$$
r_p^\omega(\omega_p \otimes \tau) = \left(\frac{1}{p+1} \omega_p \wedge \iota_{e_i}(\tau) - \frac{(-1)^p}{p(p+1)} \iota_{e_i}(\omega_p)\wedge \tau\right)\otimes \omega^{-1}(e^i) \in \wedge^{p+1} V^\ast \otimes V
$$
for $\omega_p\in \wedge^pV^\ast, \tau \in \wedge^2 V^\ast$. Further, $\{e_i\}_{i=1}^n, \{e^i\}_{i=1}^n$ are dual bases for $V$ and $V^\ast$ respectively, and $\omega^{-1}: V^\ast \to  V$ is the inverse of the contraction map $\omega:V \to V^\ast$. The proof of the following lemma is a straightforward computation:
\begin{lem}\label{lem:rprop}
\begin{itemize} 
    \item[]
    \item[i)] For $k\geq 2$, $r_k^\omega$ intertwines the $\mf {sp}(V,\omega)$-action on $\Gamma(\wedge^k V^\ast \otimes \wedge^2 V^\ast)$ and $\Gamma(\wedge^{k+1} V^\ast \otimes V)$.
    \item[ii)] For $k\geq 2$, $$r_k^\omega\circ \phi_{k+1}^\omega = \text{id}_{\Gamma(\wedge^{k+1} V^\ast \otimes V)}.$$
\end{itemize}
\end{lem}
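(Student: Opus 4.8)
The plan is to verify both statements by reducing to fibrewise linear algebra: $\phi^\omega$ and $r^\omega$ are $C^\infty(V)$-linear constant extensions of linear maps between fibres, so by bilinearity it suffices to work with decomposable elements $\alpha\otimes X$. First I would record the elementary identities that do all the work: for a $p$-form $\beta$ on $V$ and dual bases $\{e_i\},\{e^i\}$ one has the Euler-type relation $\sum_i e^i\wedge\iota_{e_i}(\beta)=p\,\beta$, hence $\sum_i\iota_{e_j}\iota_{e_i}(\alpha)\wedge e^i=(-1)^{p-1}(p-1)\,\iota_{e_j}(\alpha)$ for a $p$-form $\alpha$; moreover $\iota_{e_b}(e^a\wedge\iota_X\omega)=\delta^a_b\,\iota_X\omega-\omega(X,e_b)\,e^a$ by the Leibniz rule, and $\sum_b\omega(X,e_b)\,e^b=\iota_X\omega$. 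Along the way I would note that the defining formula for $r_p^\omega$ is independent of the chosen bases, since its only basis-dependent ingredient is the canonical coevaluation tensor $\sum_i e_i\otimes e^i\in V\otimes V^\ast$; non-degeneracy of $\omega$ enters only through $\omega^{-1}$.

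For ii), I would substitute $\phi_{k+1}^\omega(\alpha\otimes X)=(-1)^k\sum_a\iota_{e_a}(\alpha)\otimes(e^a\wedge\iota_X\omega)$ into the definition of $r_k^\omega$ and expand the resulting double sum, over the index $a$ coming from $\phi^\omega$ and the index $b$ coming from $r^\omega$. Using the identities above, each of the two summands of $r_k^\omega$ contributes a ``long'' term proportional to $\sum_b\iota_{e_b}(\alpha)\wedge\iota_X\omega\otimes\omega^{-1}(e^b)$, and the prefactors $\tfrac{1}{k+1}$ and $\tfrac{(-1)^k}{k(k+1)}$ are rigged precisely so that these two long terms cancel; what survives is a single ``short'' term equal to $-\,\alpha\otimes\omega^{-1}(\iota_X\omega)$, which is $\alpha\otimes X$ once one unwinds the sign convention for $\omega^{-1}$. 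I expect this bookkeeping of binomial coefficients and signs to be the only real obstacle in the whole lemma; conceptually the statement is just that the long terms cancel and the short term reassembles $\alpha\otimes X$.

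For i), rather than recomputing I would argue structurally. The map $r_k^\omega$ is a composite of $\mf{sp}(V,\omega)$-equivariant maps: the wedge products $\wedge^aV^\ast\otimes\wedge^bV^\ast\to\wedge^{a+b}V^\ast$ and the interior products are natural for all of $\mf{gl}(V)$, hence in particular $\mf{sp}(V,\omega)$-equivariant; the coevaluation $\mb R\to V\otimes V^\ast$, $1\mapsto\sum_i e_i\otimes e^i$, is $\mf{gl}(V)$-invariant because $A\cdot\sum_i e_i\otimes e^i=A-A=0$ for every $A\in\mf{gl}(V)$; and the musical maps $\omega\colon V\to V^\ast$ and $\omega^{-1}\colon V^\ast\to V$ are $\mf{sp}(V,\omega)$-equivariant precisely because $A\cdot\omega=0$ for $A\in\mf{sp}(V,\omega)$, whence also $A\cdot\omega^{-1}=0$. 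Since a composite of equivariant maps is equivariant, $r_k^\omega$ intertwines the fibrewise $\mf{sp}(V,\omega)$-actions, and then also the $\Gamma(\mf{sp}(V,\omega))$-actions induced by \eqref{eq:liealgactsympl}, by $C^\infty(V)$-linearity. (Equivalently, one can differentiate the formula for $r_p^\omega$ along an element $A\in\mf{sp}(V,\omega)$ using the Leibniz rule and $A\cdot\omega=0$ --- this is the ``straightforward computation'' referred to --- but the structural argument makes the statement transparent.) In summary, the hard part is confined to ii), namely getting the signs and binomial coefficients right so that the cancellation is exact; part i) is essentially formal.
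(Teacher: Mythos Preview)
Your proposal is correct and fully in line with the paper, which simply states that the proof ``is a straightforward computation'' without giving details. Your computation for ii) is exactly the right one (the two ``long'' terms do cancel by the Euler identity and the remaining ``short'' term reassembles $\alpha\otimes X$ up to the sign convention on $\omega^{-1}$, as you note), and your structural argument for i)---that $r_k^\omega$ is a composite of $\mf{gl}(V)$-natural operations together with the $\mf{sp}(V,\omega)$-equivariant map $\omega^{-1}$---is a clean way to package what would otherwise be a Leibniz-rule computation.
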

We can now give an expression for the binary operation $\llbracket-,-\rrbracket$ for which the differential of the resolution \eqref{diag:resspo} is a derivation, providing the binary bracket for an $L_\infty$-algebroid structure on the resolution.
\begin{prop}\label{prop:spbracket}
When at least one entry of $\llbracket -,-\rrbracket$ has degree $0$, we set
$$
\llbracket -,-\rrbracket = \{-,-\}.
$$
Now let $p,q\geq 2$. For $\omega_p \in \Gamma(\wedge^p V^\ast\otimes \wedge^2 V^\ast), \omega_q \in \Gamma(\wedge^q V^\ast \otimes \wedge^2 V^\ast)$, set
\begin{equation}\label{eq:spbracket2}
\llbracket \omega_p,\omega_q\rrbracket := [r_{p-1}^\omega\partial_pP_p(\omega_p), P_q(\omega_q)] + [P_p(\omega_p),r^\omega_{q-1}\partial_qP_q(\omega_q)] \mod \text{im}(\phi_{p+q}^\omega) \in \Gamma(\coker(\phi^\omega_{p+q})).
\end{equation}
Here $P_p: \Gamma(\wedge^p V^\ast \otimes \wedge^2 V^\ast) \to \Gamma(\ker(r_p^\omega))$ is the projection $$P_p = \text{id}- \phi_{p+1}^\omega \circ r_{p}^\omega,$$ and $[-,-]$ on the right hand side is the semi-direct product bracket as described in Lemma \ref{lem:propspo}iv).\\
Then the differential of \eqref{diag:resspo} is a derivation of $\llbracket-,-\rrbracket$.
\end{prop}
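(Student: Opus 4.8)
The plan is to verify the $\ell_1$--$\ell_2$ compatibility $D\llbracket a,b\rrbracket=\llbracket Da,b\rrbracket+(-1)^{|a|}\llbracket a,Db\rrbracket$, where $D$ denotes the differential of \eqref{diag:resspo} and signs follow the Koszul rule for the resolution grading, by performing the whole computation inside the dg-Lie algebra $\mathfrak L:=\Gamma(\wedge^\bullet V^\ast\otimes V)\oplus\Gamma(\wedge^\bullet V^\ast\otimes\wedge^2V^\ast)$ of Lemma~\ref{lem:propspo}(iv), equipped with the differential $\delta:=d+\partial$, the semidirect-product bracket $[-,-]$, and the second square-zero derivation $\phi^\omega$ of Lemma~\ref{lem:propspo}(iii),(v). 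The resolution \eqref{diag:resspo} of Proposition~\ref{prop:spres} sits inside $\mathfrak L$ as a graded \emph{subspace}: in degree $0$ as $\mathfrak{sp}(V,\omega)=\ker\phi_1^\omega$, and in degree $-(p-1)$ as $\Gamma(\ker r_p^\omega)$, the image of the projector $P_p=\mathrm{id}-\phi_{p+1}^\omega r_p^\omega$, which by Lemma~\ref{lem:rprop}(ii) is identified with $\Gamma(C_{p+1})$. In this picture $D$ is ``apply $\delta$, then project back into the subspace'', the bracket $\{-,-\}$ is ``apply $[-,-]$, then project back'', and the two terms of \eqref{eq:spbracket2} are obtained from $\{-,-\}$ by replacing, in one entry of a semidirect bracket, a class-representative $\omega_p$ by $Y_p:=r_{p-1}^\omega\partial_pP_p(\omega_p)\in\Gamma(\wedge^pV^\ast\otimes V)$, which by construction is a $\phi^\omega$-preimage of $\partial_pP_p(\omega_p)-D\omega_p$. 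Note $\llbracket-,-\rrbracket$ is well defined on classes since \eqref{eq:spbracket2} only involves the projectors $P_p,P_q$.

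I first reduce to the essential case: if $a$ or $b$ has degree $0$ then $\llbracket a,b\rrbracket=\{a,b\}$ and the identity is the content of the remark preceding the proposition. So take $a=\omega_p$, $b=\omega_q$ with $p,q\ge 2$, and by well-definedness take the canonical representatives $\omega_p=P_p(\omega_p)\in\ker r_p^\omega$, $\omega_q=P_q(\omega_q)\in\ker r_q^\omega$; then $[Y_p,\omega_q]+[\omega_p,Y_q]\in\Gamma(\wedge^{p+q-1}V^\ast\otimes\wedge^2V^\ast)$ represents $\llbracket\omega_p,\omega_q\rrbracket$. The computation rests on three identities, each a direct calculation from the cited lemmas: first, $\phi_p^\omega(Y_p)=\partial_p\omega_p-D\omega_p$ for $p\ge 3$ (from $\phi_p^\omega r_{p-1}^\omega=\mathrm{id}-P_{p-1}$), while for $p=2$ one has $\phi_2^\omega(Y_2)=\partial_2\omega_2$ and $D\omega_2=d_2(\phi_2^\omega)\inv\partial_2\omega_2$ lies in the other summand; second, $d_p(Y_p)=r_{p-2}^\omega\partial_{p-1}(D\omega_p)$ for $p\ge 3$, from $\partial_{p-1}\partial_p=0$, the cochain property $\phi_{p-1}^\omega d_p+\partial_{p-1}\phi_p^\omega=0$ of Lemma~\ref{lem:propspo}(iii) and $r_{p-2}^\omega\phi_{p-1}^\omega=\mathrm{id}$ of Lemma~\ref{lem:rprop}(ii), while for $p=2$ it reads $d_2(Y_2)=D\omega_2$; third, for all $Z\in\Gamma(\wedge^\bullet V^\ast\otimes V)$ and $\zeta\in\Gamma(\wedge^\bullet V^\ast\otimes\wedge^2V^\ast)$ the bracket $[\phi^\omega(Z),\zeta]$ lies in $\im\phi^\omega$, by the derivation property of $\phi^\omega$ together with $\phi^\omega(\zeta)=0$, hence vanishes modulo $\im\phi_{p+q}^\omega$.

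The verification is then mechanical. Apply $\delta$ — a derivation of $[-,-]$ by Lemma~\ref{lem:propspo}(iv) — to $[Y_p,\omega_q]+[\omega_p,Y_q]$, noting that $\delta$ acts as $d$ on $Y_p,Y_q$ and as $\partial$ on $\omega_p,\omega_q$, and that $[\omega_p,\omega_q]=0$ because $\Gamma(\wedge^\bullet V^\ast\otimes\wedge^2V^\ast)$ is an abelian ideal. Rewriting $\partial\omega_p,\partial\omega_q$ via the first identity and discarding the $[\phi^\omega(-),-]$-terms via the third, what remains are $[d Y_p,\omega_q]$, $[\omega_p,d Y_q]$ and (when $p,q\ge 3$) $[D\omega_p,Y_q]$, $[Y_p,D\omega_q]$. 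Substituting the second identity for $d Y_p,d Y_q$ and expanding $\llbracket D\omega_p,\omega_q\rrbracket$, $\llbracket\omega_p,D\omega_q\rrbracket$ by \eqref{eq:spbracket2}, these regroup precisely into $\llbracket D\omega_p,\omega_q\rrbracket+(-1)^{|\omega_p|}\llbracket\omega_p,D\omega_q\rrbracket$ modulo $\im\phi^\omega$; passing to classes in $\Gamma(C_{p+q-1})$ yields the identity.

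The step I expect to be the real obstacle is the low-degree boundary — when $p$ or $q$ equals $2$ (and, for the second identity, also $p=3$). There $D\omega_2$ leaves the cokernel range and lands in degree $0$, so $\llbracket D\omega_2,\omega_q\rrbracket$ is governed by the degree-$0$ rule $\llbracket D\omega_2,\omega_q\rrbracket=\{D\omega_2,\omega_q\}=[D\omega_2,\omega_q]\bmod\im\phi^\omega$ rather than by \eqref{eq:spbracket2}; moreover Lemma~\ref{lem:rprop} does not cover $r_1^\omega$, and $\coker\phi_2^\omega=0$ forces the irregular composite $d_2(\phi_2^\omega)\inv$ into $D$. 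Here one checks by hand that $r_1^\omega=(\phi_2^\omega)\inv$ (equivalently $P_1=0$), which makes $d_2(Y_2)=D\omega_2$ so that $[d Y_2,\omega_q]=\{D\omega_2,\omega_q\}$ supplies the whole of $\llbracket D\omega_2,\omega_q\rrbracket$, and which makes the substitution of the first identity carry \emph{no} $D\omega_2$-remainder — precisely why the term $[D\omega_2,Y_q]$, present for $p\ge 3$, correctly drops out. Carrying out these boundary adjustments amounts, concretely, to checking that the obstruction \eqref{eq:dercheck} to $\{-,-\}$ being a derivation is exactly repaired by the new terms in \eqref{eq:spbracket2}; the residual Koszul-sign bookkeeping is routine.
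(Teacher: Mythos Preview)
Your approach is the paper's own --- the paper merely says ``direct computation using Lemma~\ref{lem:propspo}iv) and Lemma~\ref{lem:rprop}ii)'', and you have written out precisely that computation, additionally invoking parts (iii) and (v) of Lemma~\ref{lem:propspo}, which the paper's one-line proof must also be using implicitly.

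One slip to fix: your ``third identity'' is misstated and, as written, does not do the work you need. You write it for $Z\in\Gamma(\wedge^\bullet V^\ast\otimes V)$ and $\zeta\in\Gamma(\wedge^\bullet V^\ast\otimes\wedge^2V^\ast)$; but then $\phi^\omega(Z)$ and $\zeta$ both lie in the abelian ideal, so $[\phi^\omega(Z),\zeta]=0$ trivially, and your justification ``$\phi^\omega(\zeta)=0$'' is vacuous. In the actual computation the terms you must discard are $[\phi^\omega(Y_p),Y_q]$ and $(-1)^{p-1}[Y_p,\phi^\omega(Y_q)]$ with $Y_p,Y_q\in\Gamma(\wedge^\bullet V^\ast\otimes V)$, and neither of these lies in $\im\phi^\omega$ individually. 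What \emph{is} true --- and is exactly Lemma~\ref{lem:propspo}(v) --- is that their sum equals $\phi^\omega([Y_p,Y_q])\in\im\phi^\omega$; and the signs you inherit from applying the Leibniz rule for $\delta$ to $[Y_p,\omega_q]+[\omega_p,Y_q]$ are precisely the signs in (v), so the two terms do combine correctly. Replace the third identity by this observation and the rest of your argument goes through unchanged.
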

\begin{proof}
The proof is a direct computation using Lemma \ref{lem:propspo}iv) and Lemma \ref{lem:rprop}ii).
\end{proof}
The natural question is now: Does $\llbracket-,-\rrbracket$ satisfy the Jacobi identity? \\

To address this, we distinguish two cases. We first compute the Jacobiator when at least one of the entries has degree 0, and then when all the entries have negative degree.
\begin{itemize}
    \item[-] For $A,B,C\in \Gamma(\mf{sp}(V,\omega))$, the Jacobiator of $\llbracket-,-\rrbracket$ is the Jacobiator of the Lie algebroid $\Gamma(\mf {sp}(V,\omega)$, which vanishes.
    \item[-] For $A,B \in \Gamma(\mf{sp}(V,\omega))$, $\omega_k\in \Gamma(\coker(\phi^\omega_{k+1}))$, the Jacobiator being zero is equivalent to $\coker(\phi^\omega_{k+1})$ being an $\mf {sp}(V,\omega)$-representation.
    \item[-] For $A \in \Gamma(\mf{sp}(V,\omega))$, $\omega_k \in \Gamma(\coker(\phi^\omega_{k+1}), \omega_l \in \Gamma(\coker(\phi^\omega_{l+1})$, the Jacobiator being zero is equivalent to the $\mf {sp}(V,\omega)$-action on $\Gamma(\coker(\phi^\omega_{j}))$ being a derivation of $\llbracket-,-\rrbracket$ restricted to negative degrees. This is the case because $r_k$ intertwines the $\mf {sp}(V,\omega)$-actions on $\Gamma(\wedge^k V^\ast \otimes \wedge^2 V^\ast)$ and $\Gamma(\wedge^{k+1} V^\ast \otimes \wedge^2 V^\ast)$. 
\end{itemize}
Consequently, the Jacobiator vanishes when at least one entry has degree 0. \\
Now let $k,l,m\geq 2$, and $\omega_k \in \Gamma(\coker(\phi^\omega_{k+1}))$, $\omega_l \in \Gamma(\coker(\phi^\omega_{l+1})), \omega_m \in \Gamma(\coker(\phi^\omega_{m+1}))$. A lengthy computation shows that the Jacobiator
$$
\llbracket\llbracket\omega_k,\omega_l\rrbracket,\omega_m\rrbracket + (-1)^{(k-1)(l+m)} \llbracket\llbracket\omega_l,\omega_m\rrbracket,\omega_k\rrbracket + (-1)^{(m-1)(k+l)}
\llbracket\llbracket\omega_m,\omega_k\rrbracket,\omega_l\rrbracket
$$
does not vanish, but is equal to
\begin{equation}\label{eq:jacobneg3}
\overline{\partial}(\llbracket-,-,-\rrbracket)(\omega_k,\omega_l,\omega_m) = \overline{\partial}\llbracket\omega_k,\omega_l,\omega_m\rrbracket + \llbracket\overline{\partial}(\omega_k),\omega_l,\omega_m\rrbracket + (-1)^{k-1} \llbracket\omega_k,\overline{\partial}(\omega_l),\omega_m\rrbracket + (-1)^{k+l} \llbracket\omega_k,\omega_l,\overline{\partial}(\omega_m)\rrbracket,
\end{equation}
where $\llbracket\omega_k,\omega_l,\omega_m\rrbracket$ is given by the class of 
\begin{equation}\label{eq:terbrack}
 [r_{k+l-1}\widehat{\llbracket\omega_k,\omega_l\rrbracket},P_m(\omega_m)]  + (-1)^{(k-1)(l+m)} [r_{k+l-1}\widehat{\llbracket\omega_l,\omega_m\rrbracket},P_k(\omega_k)]  + (-1)^{(m-1)(k+l)}
[r_{k+l-1}\widehat{\llbracket\omega_m,\omega_k\rrbracket},P_l(\omega_l)]
\end{equation}
modulo the image of $\phi^\omega_{k+l+m-1}$.
and
$$
\widehat{\llbracket\omega_k,\omega_l\rrbracket} = [r_{k-1}\partial_kP_k(\omega_k), P_l(\omega_l)] + [P_k(\omega_k),r_{l-1}\partial_lP_l(\omega_l)] \in \Gamma(\wedge^{k+l-1} V^\ast \otimes \wedge^2 V^\ast).
$$
We recognize equation \eqref{eq:jacobneg3} as a contracting homotopy for the Jacobiator: consequently, $-\llbracket-,-,-\rrbracket$ is a ternary operation satisfying the higher Jacobi identity an $L_\infty$-algebroid must satisfy. 

In particular, this does not equip the complex \eqref{diag:resspo} with the structure of a dg-Lie algebroid as in the case of $\mf {gl}(V)$, $\mf {gl}(V,W)$, and $\mf {sl}(V)$. As this structure is only unique up to $L_\infty$-algebroid homotopy, this does of course not exclude the possibility that there exists a dg-Lie algebroid structure inducing the foliation $\mc F_\omega$.\\

In appendix \ref{appendix1}, we investigate to what extent $r^\omega$ can be chosen to (anti)-commute with the differentials, as this would simplify both the binary and ternary bracket.
\begin{rmk}\label{rmk:whendgla}
\begin{itemize}
    \item[]
    \item[-] For degree reasons, the operation $\llbracket-,-,-\rrbracket$ vanishes when $\dim V \leq 4$. This means that when $\dim V = 2$ or $\dim V = 4$, the foliation $\mc F_{\omega}$ does admit a universal $L_\infty$-algebroid with only a unary and binary bracket. Of course, for $\dim V = 2$, $\mf {sp}(V,\omega) = \mf {sl}(V)$, for which it was already known that a dg-Lie algebroid structure structure exists.
    \item[-] When $\dim V = 6$, the unary, binary and ternary bracket determine the full $L_\infty$-algebroid structure.
\end{itemize}
\end{rmk}

\section{Higher-dimensional leaves}\label{sec:hdleaf}
In this section we address question 2) of section \ref{a}. Instead of considering foliations on a vector space with linear generators, we consider foliations $\mc F$ on vector \emph{bundles} $\pi:E\to L$ which are generated by \emph{fiberwise} linear vector fields such that the zero section is a leaf. \\

For $x\in L$ the fibers $E_x = \pi^{-1}(\{x\})$ of $\pi:E\to L$ are transverse to $L$, the foliation $\mc F$ restricts to the fibers (\cite[Proposition 1.10]{holgpd}). We consider foliations for which the restriction $\left. \mc F\right|_{E_x}$ coincides with one of the examples in section \ref{sec:0dimleaf}. All results given in section \ref{sec:0dimleaf} will carry over, although in order to define the analogue of $\mc F_\mu$ and $\mc F_\omega$ we need the existence of non-vanishing sections of $\wedge^{\rk(E)} E^\ast$ and $\wedge^2 E^\ast$ respectively.\\

In section \ref{sec:zerosec}, we consider the foliation of all vector fields on the vector bundle $E$, for which the restriction to the zero section is tangent to the zero section. This foliation consists of \emph{all} fiberwise linear vector fields and is the analogue of $\mc F_0$ in section \ref{sec:vf00}.\\

In section \ref{sec:subbundle}, we consider the foliation of all fiberwise linear vector fields on $E$ tangent to a subbundle $D$, which is the analogue of $\mc F_W$ in section \ref{sec:subsp}.\\

In section \ref{sec:vol1}, we assume that $E$ is orientable, and consider the foliation of all fiberwise linear vector fields on $E$ which preserve a non-vanishing (on $L$) section $\mu \in \Gamma(\wedge^{\rk(E)} E^\ast)$, which is the analogue of $\mc F_\mu$ in section \ref{sec:vol0}.\\

In section \ref{sec:sympleaf}, we assume that $E\to L$ is a \emph{symplectic} vector bundle with non-degenerate $\omega \in \Gamma(\wedge^2 E^\ast)$, and consider the foliation of all fiberwise linear vector fields on $E$ which preserve $\omega$, which is the analogue of $\mc F_\omega$ in section \ref{sec:linsymp}.
\subsection{Vector fields tangent to the zero section}\label{sec:zerosec}
To generalize section \ref{sec:vf00}, we need a generalization of the Lie algebra $\mf{gl}(V)$ for a vector space $V$ to a vector bundle. Let $L$ be a smooth manifold, and let $\pi:E\to L$ be a (real) vector bundle of rank $n$. One way to generalize $\mf {gl}(V)$ would be to use the Lie algebra bundle $\End(E)$. This is however not the right thing to consider: although it acts infinitesimally on $E$, for $x\in L$ the leaves of the foliation are given by $0_x\in E_x$ and the connected components of $E_x\setminus{0_x}$. We are however interested in the situation where the zero section is a leaf, and the transverse foliation at a point of the zero section is given by \eqref{eq:0inorigin}. This is the case when dealing with a linearizable foliation around an embedded codimension $n$ leaf with isotropy Lie algebra bundle $\End(E)$ acting on the $n$-dimensional fiber of the normal bundle by all vector bundle maps $E\to E$.\\
First recall that there are two distinguished classes of smooth functions on a vector bundle $E$. The fiber-wise constant maps, given by the image of $\pi^\ast: C^\infty(L) \to C^\infty(E)$, and the fiber-wise linear ones, given by $\Gamma(E^\ast)$.\\
Now let
$$
\mf X_{lin}(E) = \{X \in \mf X(E) \mid X (\pi^\ast(C^\infty(L)))\subset \pi^\ast(C^\infty(L)), \,X(\Gamma(E^\ast))\subset \Gamma(E^\ast)\}
$$
be the set of vector fields preserving the fiberwise constant and fiberwise linear functions.
First off, we note that $\mf X_{lin}(E)$ is isomorphic to the sections of a transitive Lie algebroid over $L$ (see \cite[Theorem 1.4]{cdoksmk}).
\begin{lem}\label{lem:linvf}
\begin{itemize}
\item[]
\item[i)] There is a short exact sequence of $C^\infty(L)$-modules
\[
\begin{tikzcd}
0 \arrow{r}& \Gamma(End(E)) \arrow{r}{a} & \mf X_{lin}(E) \arrow{r}{\rho} & \mf X(L) \arrow{r}& 0.
\end{tikzcd}
\]
Here $\rho$ is the restriction of a vector field to the subalgebra $\pi^\ast(C^\infty(L))$ and 
$$
a(A)(f) = \left.\frac{d}{dt}\right|_{t = 0}f\circ exp(-tA),
$$
which is a fiberwise extension of the identification of linear vector fields on a vector space with the endomorphisms on the vector space.
\item[ii)]$\mf X_{lin}(E)$ is a finitely generated projective $C^\infty(L)$-module. So there exists a vector bundle $\mf{gl}(E)$ such that $\mf X_{lin}(E) = \Gamma(\mf {gl}(E))$.
\item[iii)] $\mf X_{lin}(E)$ is closed under the Lie bracket of vector fields.
\item[iv)] The triple $(\mf {gl}(E), \rho, [-,-])$ is a Lie algebroid.
\end{itemize}
\end{lem}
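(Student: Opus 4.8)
The plan is to reduce everything to a local normal form. Over a trivializing open set $U\subseteq L$ with $E|_U\cong U\times\R^n$, base coordinates $x^i$ and fiber coordinates $\xi^a$, a vector field $X=f^i(x,\xi)\partial_{x^i}+g^a(x,\xi)\partial_{\xi^a}$ lies in $\mf X_{lin}(E)$ if and only if the $f^i$ depend on $x$ alone (from preservation of $\pi^\ast C^\infty(L)$, applied to functions of $x$ only) and the $g^a$ are fiberwise linear, $g^a=A^a_b(x)\xi^b$ (from preservation of $\Gamma(E^\ast)$, applied to functions $\alpha_a(x)\xi^a$). From this one reads off locally that $\rho$ sends such an $X$ to $f^i\partial_{x^i}$, is surjective, and has kernel the vertical fields $A^a_b(x)\xi^b\partial_{\xi^a}$, which is exactly the image of $\Gamma(\End(E)|_U)$ under $a$.

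\emph{Part i).} The map $\rho$ is globally well-defined and $C^\infty(L)$-linear because $X(\pi^\ast C^\infty(L))\subseteq\pi^\ast C^\infty(L)$ means $X$ induces a derivation of $C^\infty(L)\cong\pi^\ast C^\infty(L)$. For global surjectivity I would fix a linear connection $\nabla$ on $E$ and note that the horizontal lift $Y\mapsto Y^h$ maps $\mf X(L)$ into $\mf X_{lin}(E)$ — the horizontal distribution of a linear connection is invariant under fiberwise dilation and its horizontal lifts send fiberwise linear functions to fiberwise linear functions — with $\rho(Y^h)=Y$. The map $a$ is well-defined since $a(A)$ is the generator of the flow $\exp(-tA)$ of fiberwise linear automorphisms, hence a vertical field annihilating $\pi^\ast C^\infty(L)$ and sending $\ell\in\Gamma(E^\ast)$ to $-\ell\circ A\in\Gamma(E^\ast)$; it is injective because $a(A)=0$ forces $A$ to act as $0$ on every fiber, so $A=0$. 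Exactness at $\mf X_{lin}(E)$ follows from the local normal form: $\ker\rho$ is the space of vertical fields in $\mf X_{lin}(E)$, which is $\im a$.

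\emph{Parts ii)--iv).} The short exact sequence of i) has projective quotient $\mf X(L)=\Gamma(TL)$, so it splits (again via a choice of linear connection), giving $\mf X_{lin}(E)\cong\Gamma(\End(E))\oplus\Gamma(TL)=\Gamma(\End(E)\oplus TL)$ as $C^\infty(L)$-modules; hence $\mf X_{lin}(E)$ is finitely generated projective and, by Serre--Swan, equals $\Gamma(\mf{gl}(E))$ for a vector bundle $\mf{gl}(E)$ (abstractly isomorphic to $\End(E)\oplus TL$, though its Lie structure will not respect this splitting). Closedness under the bracket is immediate: if $X,Y\in\mf X_{lin}(E)$ and $h\in\pi^\ast C^\infty(L)$ then $Y(h)\in\pi^\ast C^\infty(L)$, so $X(Y(h))\in\pi^\ast C^\infty(L)$, and likewise for the other term, whence $[X,Y](h)\in\pi^\ast C^\infty(L)$; the identical argument on $\Gamma(E^\ast)$ gives $[X,Y]\in\mf X_{lin}(E)$. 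For the Lie algebroid axioms, $\rho$ is $C^\infty(L)$-linear and hence induced by a vector bundle morphism $\mf{gl}(E)\to TL$; the Leibniz rule follows from $[X,(\pi^\ast f)Y]=(\pi^\ast f)[X,Y]+(X\pi^\ast f)Y=(\pi^\ast f)[X,Y]+(\pi^\ast(\rho(X)f))Y$, using $X\pi^\ast f=\pi^\ast(\rho(X)f)$; and the Jacobi identity is inherited from $\mf X(E)$. All of this is also recorded in \cite[Theorem 1.4]{cdoksmk}.

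The only step requiring more than bookkeeping is producing the linear connection used for the global surjectivity of $\rho$ in i) and for the splitting in ii), and checking that its horizontal lifts genuinely lie in $\mf X_{lin}(E)$; once that is in hand, everything else is the local normal form together with routine derivation identities.
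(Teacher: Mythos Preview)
Your proposal is correct and follows the same approach the paper indicates: the paper's proof is the one-line ``i) is a local computation, ii) follows from i), and iii) and iv) are immediate,'' and you have simply filled in those details (local normal form for linear vector fields, splitting via a connection and Serre--Swan, derivation identities for the bracket and Leibniz rule).
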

\begin{proof}
i) is a local computation, ii) follows from i), and iii) and iv) are immediate. 
\end{proof}
We can now construct a singular foliation on $E$ for which $L$ is a leaf: let 
$$
\mc F_L(E) = \{X\in \mf X(E) \mid \left. X \right|_{L} \in \mf X(L)\}. 
$$
\begin{lem}\label{lem:tangenttol}
$\mc F_L(E) = \text{Im}(C^\infty(E) \otimes_{C^\infty(L)} \mf X_{lin}(E) \stackrel{m}{\to} \mf X(E))$, where $m$ is the natural multiplication map.
\end{lem}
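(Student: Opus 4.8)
The plan is to prove the two inclusions separately, working locally over trivializing opens of $E\to L$ and then gluing. First I would observe that the image of $m$ is automatically a $C^\infty(E)$-submodule of $\mf X(E)$, so the inclusion ``$\supseteq$'' reduces to checking that every element of the form $f\cdot a(A)$ and $f\cdot\widetilde X$ (for $f\in C^\infty(E)$, $A\in\Gamma(\End(E))$, and $\widetilde X\in\mf X_{lin}(E)$ a lift of $X\in\mf X(L)$) restricts to a vector field on $L$ along the zero section; since $a(A)$ vanishes on $L$ (it is fiberwise linear, hence zero on the zero section) and $\widetilde X$ is $\rho$-related to $X\in\mf X(L)$ by Lemma \ref{lem:linvf}i), both lie in $\mc F_L(E)$, and the submodule they generate is contained in $\mc F_L(E)$.

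For the reverse inclusion ``$\subseteq$'', take $Y\in\mc F_L(E)$. I would first use a lift: by Lemma \ref{lem:linvf}i) the restriction $\left.Y\right|_L\in\mf X(L)$ has a preimage $\widetilde Y\in\mf X_{lin}(E)\subseteq\mc F_L(E)$, so after replacing $Y$ by $Y-\widetilde Y$ we may assume $\left.Y\right|_L=0$, i.e. $Y$ is a vector field on $E$ vanishing identically along the zero section. The claim then becomes: every such $Y$ lies in the $C^\infty(E)$-module generated by $\mf X_{lin}(E)$, equivalently by the subbundle $a(\End(E))$ together with $\mf X_{lin}(E)$. Working in a local trivialization $E|_U\cong U\times\R^n$ with base coordinates $x^i$ and fiber coordinates $y^\alpha$, a general vector field is $Y=\sum P^i\,\partial_{x^i}+\sum Q^\alpha\,\partial_{y^\alpha}$ with $P^i,Q^\alpha\in C^\infty(U\times\R^n)$, and vanishing on $L=\{y=0\}$ means $P^i(x,0)=0$ and $Q^\alpha(x,0)=0$. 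Now apply the smooth Hadamard lemma in the fiber variables: $P^i=\sum_\beta y^\beta R^i_\beta$ and $Q^\alpha=\sum_\beta y^\beta S^\alpha_\beta$ for smooth functions $R^i_\beta,S^\alpha_\beta$. Then $y^\beta\partial_{x^i}$ and $y^\beta\partial_{y^\alpha}$ are exactly the local fiberwise-linear vector fields — the former lies in $\mf X_{lin}(E)$ (it projects to $0\in\mf X(U)$, hence is in the image of $a$), and the latter is $a(E_{\alpha\beta})$ for the elementary endomorphism $E_{\alpha\beta}$ — so $Y$ is a $C^\infty(U\times\R^n)$-linear combination of elements of $\mf X_{lin}(E)|_U$, which is precisely a local section of $\mathrm{Im}(m)$.

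It remains to pass from the local statement to the global one. Since $\mathrm{Im}(m)$ is the image of a sheaf map and $\mc F_L(E)$ is defined by a local (pointwise-on-$L$) condition, both are subsheaves of $\mf X_E$ and the computation above shows they agree on a cover of $E$ by opens of the form $\pi^{-1}(U)$; a partition of unity on $L$ subordinate to the trivializing cover, pulled back via $\pi^\ast$, then writes any global $Y\in\mc F_L(E)$ with $\left.Y\right|_L=0$ as a locally finite $C^\infty(E)$-combination of elements of $\mf X_{lin}(E)$, using that $\mf X_{lin}(E)=\Gamma(\mf{gl}(E))$ is genuinely the global sections of a bundle (Lemma \ref{lem:linvf}ii)). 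I expect the only mild subtlety to be bookkeeping the partition-of-unity gluing so that the resulting sum is a finite $C^\infty(L)$-combination as required by the tensor product $C^\infty(E)\otimes_{C^\infty(L)}\mf X_{lin}(E)$; this is handled by choosing a \emph{finite} trivializing cover when $L$ is second-countable, or more cleanly by noting that $\mf X_{lin}(E)$ is finitely generated projective over $C^\infty(L)$ so a finite global generating set exists, reducing the Hadamard argument to those finitely many generators. The core content — the only genuinely nontrivial step — is the Hadamard-lemma factorization in the fiber directions, which is the bundle analogue of the identity $I_p\,\mf X(V)=\mc F_0(V)$ used in \eqref{eq:resgln}.
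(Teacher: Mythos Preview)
Your overall strategy is exactly what the paper means by ``follows from a local computation'': prove both inclusions, the nontrivial one by a Hadamard factorization in the fiber variables over a trivializing chart, then glue. There is, however, one genuine error in the local step.

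The vector field $y^\beta\partial_{x^i}$ is \emph{not} an element of $\mf X_{lin}(E)$: applied to a fiberwise constant function $f(x)$ it returns $y^\beta(\partial_{x^i}f)(x)$, which is fiberwise linear, not constant. In particular it is not in the image of $a$ (the image of $a$ consists of the \emph{vertical} linear vector fields $b^\alpha_\beta(x)y^\beta\partial_{y^\alpha}$), and ``projects to $0$'' is false too --- under $d\pi$ the vector $y^\beta\partial_{x^i}$ maps to $y^\beta\partial_{x^i}\in T_xL$, which is nonzero away from the zero section. So your claimed decomposition of the horizontal piece fails as written.

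The fix is immediate and actually simplifies the argument. In a local trivialization the coordinate vector fields $\partial_{x^i}$ are themselves linear (they send $f(x)\mapsto\partial_{x^i}f(x)$ and $c_\alpha(x)y^\alpha\mapsto(\partial_{x^i}c_\alpha)(x)y^\alpha$), so the horizontal part $P^i\partial_{x^i}$ is already in the image of $m$ for \emph{arbitrary} $P^i\in C^\infty(U\times\R^n)$; no Hadamard lemma is needed there, and the preliminary subtraction of $\widetilde Y$ is unnecessary. Only the vertical part $Q^\alpha\partial_{y^\alpha}$ genuinely uses the condition $\left.Y\right|_L\in\mf X(L)$, which forces $Q^\alpha(x,0)=0$ and hence $Q^\alpha=y^\beta S^\alpha_\beta$, giving $Q^\alpha\partial_{y^\alpha}=S^\alpha_\beta\cdot(y^\beta\partial_{y^\alpha})$ with $y^\beta\partial_{y^\alpha}\in\mf X_{lin}(E)$. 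With this correction the local computation goes through, and your partition-of-unity gluing is fine.
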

\begin{proof}
Follows from a local computation.
\end{proof}
\begin{rmk}
Note that $C^\infty(E) \otimes_{C^\infty(L)} \mf X_{lin}(E) = \Gamma(\pi^\ast (\mf{gl}(E)))$, which are the sections of the action Lie algebroid corresponding to the natural action of $\mf X_{lin}(E)$ on $E$, its anchor is $m$, while the bracket is given by 
$$
[f\otimes X,g \otimes Y] = fg \otimes [X,Y] + fX(g)\otimes Y - gY(f) \otimes X
$$
for $f,g \in C^\infty(E)$, $X,Y \in \mf X_{lin}(E)$.
\end{rmk}
\subsubsection{The projective resolution}
Lemma \ref{lem:tangenttol} now gives a first step in the resolution of $\mc F_L(E)$: 
\[
\begin{tikzcd}
\Gamma(\pi^\ast(\mf {gl}(E))) \arrow{r}{m} &\mc F_L(E) \arrow{r} & 0.
\end{tikzcd}
\]
Observe that $m$ is not injective! However, the kernel can be explicitly described. As a first step, we show that the kernel only affects the direction transverse to the leaf.
\begin{lem}\label{lem:kerver}
$$\ker(m) \subset \Gamma(\pi^\ast(End(E)))\subset \Gamma(\pi^\ast(\mf{gl}(E))).$$
\end{lem}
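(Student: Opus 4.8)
The plan is to reduce the statement to a purely local (in $L$) computation, since both the short exact sequence of Lemma \ref{lem:linvf}i) and the description of $\mc F_L(E)$ in Lemma \ref{lem:tangenttol} are local, and the claim $\ker(m)\subset\Gamma(\pi^\ast(\End(E)))$ can be checked on a trivializing neighbourhood $U\subset L$ over which $E|_U\cong U\times V$. First I would take a local frame, so that an element of $\Gamma(\pi^\ast(\mf{gl}(E)))$ over $U$ is written, via the splitting induced by Lemma \ref{lem:linvf}i), as $\xi = \sum_a f^a\otimes X_a$ with $f^a\in C^\infty(E|_U)$ and $X_a\in \mf X_{lin}(E|_U)$, and decompose each $X_a = a(A_a) + \widetilde{\rho(X_a)}$ into its ``vertical'' (endomorphism) part $a(A_a)\in\Gamma(\End(E)|_U)$ and a chosen ``horizontal'' lift of a vector field $\rho(X_a)$ on $U$. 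Then $m(\xi)$, evaluated as a vector field on $E|_U$, splits into a part tangent to the fibers (coming from the $A_a$) and a part projecting to $\sum_a f^a\,\rho(X_a)\in\mf X(U)$-valued functions on $E|_U$ via $\pi_\ast$.

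The key step is then: if $m(\xi)=0$, apply $\pi_\ast$ (equivalently, restrict the action of $m(\xi)$ to $\pi^\ast C^\infty(L)$) to conclude that the ``horizontal component'' $\sum_a f^a\otimes\rho(X_a)$ must already vanish in $C^\infty(E|_U)\otimes_{C^\infty(U)}\mf X(U)$, because the endomorphism part $a(A_a)$ kills $\pi^\ast C^\infty(L)$. Concretely, writing $\rho(X_a)=\sum_i c_a^i\,\partial_{x^i}$ in coordinates $x^i$ on $U$, the pushforward condition forces $\sum_a f^a c_a^i = 0$ for every $i$; but since $\mf X_{lin}(E)$ is a projective $C^\infty(L)$-module and $\rho$ is $C^\infty(L)$-linear, this lets me rewrite $\xi$ as an element in which every $X_a$ has been replaced by a purely vertical endomorphism-valued section, i.e. $\xi\in\Gamma(\pi^\ast(\End(E))|_U)$. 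Gluing over a cover of $L$ gives $\ker(m)\subset\Gamma(\pi^\ast(\End(E)))$ globally.

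I expect the main obstacle to be bookkeeping rather than conceptual: making the decomposition $X_a = a(A_a)+(\text{horizontal lift})$ compatible with the tensor product over $C^\infty(L)$ — since the splitting of the sequence in Lemma \ref{lem:linvf}i) is not canonical, one must be careful that the ``horizontal part'' is well defined modulo $\Gamma(\End(E))$ and that multiplying by $f^a\in C^\infty(E)$ (not just $C^\infty(L)$) before applying $m$ does not spoil the argument; this is exactly where one uses that $a(A)$ annihilates $\pi^\ast C^\infty(L)$ but a horizontal lift of $\rho(X_a)\ne 0$ does not. An alternative, cleaner route avoiding coordinates is to observe that $m$ is a morphism of Lie algebroids covering $\mathrm{id}_E$ from the action algebroid $\pi^\ast\mf{gl}(E)$ to $TE$, whose composition with $\pi_\ast:TE\to\pi^\ast TL$ is precisely $\pi^\ast\rho:\pi^\ast\mf{gl}(E)\to\pi^\ast TL$; hence $\ker(m)\subset\ker(\pi^\ast\rho)=\pi^\ast(\ker\rho)=\pi^\ast(\End(E))$ by exactness of $\pi^\ast$ applied to the sequence of Lemma \ref{lem:linvf}i). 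I would present the second argument as the main proof and relegate the local check to a remark.
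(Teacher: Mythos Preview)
Your proposal is correct, and the ``alternative, cleaner route'' you describe at the end is exactly the paper's own proof: the paper draws the commutative triangle with $m$, $d\pi$, and $\mathrm{Id}\otimes\rho$, and concludes from Lemma~\ref{lem:linvf}i). Your first, coordinate-based argument is a valid unpacking of the same idea but is unnecessary once you have the diagram; since you already intend to present the second argument as the main proof, your write-up will match the paper's.
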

The following argument is thanks to Marco Zambon.
\begin{proof}
There is a commutative diagram of $C^\infty(E)$-modules given by
\[
\begin{tikzcd}
\Gamma(\pi^\ast(\mf {gl}(E))) \arrow{r}{m}\arrow{rd}{\text{Id}\otimes \rho} & \mf X(E) \arrow{d}{d\pi}\\
& \Gamma(\pi^\ast(TL))
\end{tikzcd}.
\]
The statement now follows from Lemma \ref{lem:linvf}i).
\end{proof}
Viewing $End(E)$ as $E^\ast \otimes E$, we can write down a complex analogous to \eqref{eq:resgln}.
\begin{equation}\label{diag:resgle}
\begin{tikzcd}
0\arrow{r}& \Gamma(\pi^\ast(\wedge^n E^\ast \otimes E)) \arrow{r}{d_n} & \dots \arrow{r}{d_3} & \Gamma(\pi^\ast(\wedge^2E^\ast \otimes E)) \arrow{r}{d_2}& \Gamma(\pi^\ast(\mf {gl}(E))) \arrow{r}& \mc F_L(E)\arrow{r}&0,
\end{tikzcd}
\end{equation}
where 
$$
d_k: \Gamma(\pi^\ast(\wedge^k E^\ast \otimes E)) \to \Gamma(\pi^\ast(\wedge^{k-1} E^\ast \otimes E) )
$$
is given by
\begin{equation}\label{eq:contreuler}
d_k(\alpha \otimes e) = y^i \iota_{e_i}(\alpha)\otimes e,
\end{equation}
for $\alpha \otimes e \in \Gamma( \pi^\ast(\wedge^k (E^\ast\otimes E)))$, $\{e_i\}_{i=1}^n$ a local frame of $E$, and $\{y^i\}_{i =1}^n$ the corresponding linear coordinates (note that this does not depend on the choice of frame and defines a global section $\epsilon \in \Gamma(\pi^\ast E)$). We then have:
\begin{prop}\label{prop:resgle}
The complex \eqref{diag:resgle} is exact. 
\end{prop}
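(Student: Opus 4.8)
The plan is to reduce \eqref{diag:resgle} to the vector-space situation of Lemma \ref{lem:koszulexact} by localizing over $L$, and to dispose of the two non-obvious spots at the bottom of the complex using Lemmas \ref{lem:tangenttol} and \ref{lem:kerver}. Since exactness of a complex of sheaves of $C^\infty_E$-modules may be checked on an open cover of $E$, and every bundle occurring in \eqref{diag:resgle} is pulled back from $L$, it suffices to prove exactness over $\pi^{-1}(U)$ for $U$ ranging over the domains of charts that trivialize $E$. So I would fix such a $U$, a basis $\{e_i\}_{i=1}^n$ of the fibre $V$, and the induced fibrewise-linear coordinates $\{y^i\}_{i=1}^n$ on $E|_U \cong U\times V$. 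Then $\Gamma(\pi^\ast(\wedge^k E^\ast\otimes E))|_{\pi^{-1}(U)} = C^\infty(U\times V)\otimes(\wedge^k V^\ast\otimes V)$, and by \eqref{eq:contreuler} the map $d_k$ is contraction of the $\wedge^k E^\ast$-slot against the fibrewise Euler section $\epsilon = y^i e_i\in\Gamma(\pi^\ast E)$, tensored with $\mathrm{id}_E$.

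For the terms in homological degrees $\ge 2$ this is exactly the Koszul complex of the sequence $y^1,\dots,y^n\in C^\infty(U\times V)$, tensored over $C^\infty(U\times V)$ with the free module $\Gamma(\pi^\ast E)$. That sequence is regular: $y^1$ is a non-zero-divisor, and by Hadamard's lemma $C^\infty(U\times V)/(y^1,\dots,y^\ell)\cong C^\infty(U\times V_\ell)$ for the subspace $V_\ell = \{y^1=\dots=y^\ell=0\}$, so each $y^{\ell+1}$ remains a non-zero-divisor on the quotient. Hence its Koszul complex is a resolution; this is precisely the content of Lemma \ref{lem:koszulexact} with $C^\infty(V)$ replaced by $C^\infty(U\times V)$, the Euler vector field replaced by $\epsilon$, and $W=V$, and the contracting homotopy used to prove that lemma is $C^\infty(U)$-linear, so it carries over verbatim to this family situation. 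I would invoke this to obtain exactness of \eqref{diag:resgle} at $\Gamma(\pi^\ast(\wedge^k E^\ast\otimes E))$ for every $k\ge 2$ and injectivity of $d_n$.

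It then remains to handle the bottom, $\Gamma(\pi^\ast(\wedge^2 E^\ast\otimes E))\xrightarrow{d_2}\Gamma(\pi^\ast\mf{gl}(E))\xrightarrow{m}\mc F_L(E)\to 0$. Surjectivity of $m$ is Lemma \ref{lem:tangenttol}. For exactness at $\Gamma(\pi^\ast\mf{gl}(E))$ I would show $\operatorname{im}(d_2)=\ker m$: by \eqref{eq:contreuler} the image of $d_2$ lies in $\Gamma(\pi^\ast\End(E))\subseteq\Gamma(\pi^\ast\mf{gl}(E))$, and by Lemma \ref{lem:kerver} so does $\ker m$, so it is enough to compute $\ker\big(m|_{\Gamma(\pi^\ast\End(E))}\big)$. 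Writing an endomorphism over $C^\infty(E)$ as a matrix $(A^j_i)$, the map $m$ sends it to the vertical vector field $\sum_j\big(\sum_i A^j_i y^i\big)\partial_{y^j}$, so its kernel is, column by column, the module of first syzygies of $(y^1,\dots,y^n)$; by regularity of that sequence this syzygy module is exactly the image of $\wedge^2 V^\ast\otimes V\xrightarrow{\iota_\epsilon\otimes\mathrm{id}}V^\ast\otimes V$ (extended $C^\infty(E)$-linearly), i.e. $\operatorname{im}(d_2)$. This closes the argument.

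I expect the only real subtlety to lie at these bottom two terms: the complex stops being visibly Koszul there, because the term in homological degree $1$ is $\mf{gl}(E)$ rather than $\End(E)$ and the extension $0\to\End(E)\to\mf{gl}(E)\to TL\to 0$ need not split globally. Lemmas \ref{lem:tangenttol} and \ref{lem:kerver} are exactly what is needed to see around this — together they confine $\ker m$ to the ``vertical'' sub-$C^\infty(E)$-module $\Gamma(\pi^\ast\End(E))$, after which everything reduces to the homological computation above. A minor point worth a sentence in the write-up is that the Koszul argument is being run over the non-Noetherian ring $C^\infty(U\times V)$ and with parameters along $L$; but this is already the content and the proof of Lemma \ref{lem:koszulexact}, so no new work is required.
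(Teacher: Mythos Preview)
Your argument is correct and follows the same strategy as the paper's proof: localize over a trivializing cover of $L$ and reduce to the Koszul complex \eqref{eq:resgln}. The paper compresses this into one sentence, whereas you unpack the bottom two spots using Lemmas \ref{lem:tangenttol} and \ref{lem:kerver}; this is exactly the content hidden in the paper's phrase ``equivalent to the exactness of \eqref{eq:resgln}'', since over a trivializing chart the local splitting $\mf{gl}(E)\cong TL\oplus\End(E)$ separates off a trivially exact horizontal summand and leaves precisely the Koszul complex.
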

\begin{proof}
Proving exactness is completely analogous to the case considered in section \ref{sec:vf00}: it suffices to pick an open cover of $L$ over which $E$ trivializes (so $\pi^\ast(E)$ trivializes over the preimages of this cover), and show exactness over this open cover. But for trivial bundles the result is equivalent to the exactness of \eqref{eq:resgln}.
\end{proof}
\subsubsection{The $L_\infty$-algebroid structure}
We claim that we can again find a dg-Lie algebroid structure on the resolution \eqref{diag:resgle}. Note that for degrees $-1,\dots, -n+1$, the involved spaces are simply the fiberwise extensions of \eqref{eq:resgln}, so we take the fiberwise extension of \eqref{eq:glnbracket}. To incorporate $\mf X_{lin}(E)$ inside into this, we recall the following:
\begin{lem}\label{lem:cdoact}
The action of $\mf X_{lin}(E)$ on $\Gamma(E^\ast)$ extends to $\Gamma(E)$, all tensor, wedge and symmetric products and their pullbacks to $E$.  
\end{lem}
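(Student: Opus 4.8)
The plan is to recognize the $\mf X_{lin}(E)$-action on $\Gamma(E^\ast)$ as a flat $\mf{gl}(E)$-connection, i.e. as a representation of the Lie algebroid $\mf{gl}(E)$, and then push it through the standard functorial constructions: dualization, tensor/wedge/symmetric powers, and pullback along $\pi$. First I would unwind the action on $\Gamma(E^\ast)$ itself. For $X\in\mf X_{lin}(E)$ and $\xi\in\Gamma(E^\ast)$, regarded as a fiberwise linear function on $E$, the derivative $\nabla^\ast_X\xi := X(\xi)$ lies again in $\Gamma(E^\ast)$ by the very definition of $\mf X_{lin}(E)$. Using that $X$ preserves $\pi^\ast(C^\infty(L))$ with $X(\pi^\ast f)=\pi^\ast(\rho(X)f)$, where $\rho$ is the anchor of Lemma \ref{lem:linvf}i), one checks $\nabla^\ast_{fX}\xi = f\,\nabla^\ast_X\xi$ and $\nabla^\ast_X(f\xi)=(\rho(X)f)\,\xi+f\,\nabla^\ast_X\xi$ for $f\in C^\infty(L)$, so $\nabla^\ast$ is a $\mf{gl}(E)$-connection on $E^\ast$; flatness $\nabla^\ast_{[X,Y]}=[\nabla^\ast_X,\nabla^\ast_Y]$ is immediate since the bracket on $\mf X_{lin}(E)$ is the commutator of vector fields.

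Next I would dualize and take tensor constructions. Define $\nabla$ on $E$ by $\langle\nabla_X s,\xi\rangle := \rho(X)\langle s,\xi\rangle-\langle s,\nabla^\ast_X\xi\rangle$ for $s\in\Gamma(E)$, $\xi\in\Gamma(E^\ast)$; the right-hand side is $C^\infty(L)$-linear in $\xi$, so $\nabla_X s\in\Gamma(E)$ is well defined, and the Leibniz rule and flatness follow as before. Then on any bundle $E^{\otimes p}\otimes(E^\ast)^{\otimes q}$ the Leibniz extension $\nabla_X(u_1\otimes\cdots\otimes u_m):=\sum_i u_1\otimes\cdots\otimes\nabla_X u_i\otimes\cdots\otimes u_m$ is again a flat $\mf{gl}(E)$-connection, and it preserves the invariant subbundles $\wedge^k E$, $\Sym^k E$ and their duals and mixtures. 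On each of these the assignment $X\mapsto\nabla_X$ is $\R$-linear, has symbol $\rho(X)$, and intertwines brackets with the commutator, hence is again an action of $\mf X_{lin}(E)$ (equivalently, the image of $X$ under a Lie algebroid morphism out of $\mf{gl}(E)$).

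Finally I would handle the pullback. For any vector bundle $B\to L$ carrying a flat $\mf{gl}(E)$-connection $\nabla$ produced as above, I define an action of $\mf X_{lin}(E)$ on $\Gamma(\pi^\ast B)$ by setting $X\cdot\pi^\ast s:=\pi^\ast(\nabla_X s)$ for $s\in\Gamma(B)$ and extending by the Leibniz rule $X\cdot(h\,t):=X(h)\,t+h\,(X\cdot t)$ for $h\in C^\infty(E)$, $t\in\Gamma(\pi^\ast B)$; since the sections $\pi^\ast s$ generate $\Gamma(\pi^\ast B)$ over $C^\infty(E)$, this pins the operator down once one knows it is well defined. The relations to check are $C^\infty(L)$-linearity and $\pi^\ast(fs)=(\pi^\ast f)(\pi^\ast s)$, and they hold because $X(\pi^\ast f)=\pi^\ast(\rho(X)f)$ combines with the Leibniz rule for $\nabla$ to give $X\cdot\pi^\ast(fs)=\pi^\ast((\rho(X)f)\,s+f\,\nabla_X s)=\pi^\ast(\nabla_X(fs))$; that the resulting operators form a Lie algebra action over $C^\infty(E)$, with symbol the vector field $X$ on $E$ itself, is then checked on generators. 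Applying this with $B=\wedge^k E^\ast\otimes E$ and its variants yields exactly the pulled-back actions used in the sequel.

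\textbf{Main obstacle.} None of the individual verifications is deep, but the one genuinely delicate point is the well-definedness of this last pullback step: consistency of the prescription on the generators $\pi^\ast s$ with the $C^\infty(E)$-module relations of $\Gamma(\pi^\ast B)$, which is precisely where the identity $X\circ\pi^\ast=\pi^\ast\circ\rho(X)$ on functions is essential. Alternatively, and perhaps more cleanly, all of these actions can be produced at one stroke as the Lie derivative along the local flow of $X$, which is a vector bundle automorphism of $E$ covering the flow of $\rho(X)$ and therefore acts functorially on every bundle associated to $E$ and on its pullback to $E$; differentiating at $t=0$ recovers the algebraic formulas above and makes compatibility with the brackets manifest.
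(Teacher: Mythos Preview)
Your proposal is correct and follows essentially the same approach as the paper: recognize the action on $\Gamma(E^\ast)$ as a flat $\mf{gl}(E)$-connection, dualize, extend by the Leibniz rule to tensor constructions, and then handle the pullback via $\Gamma(\pi^\ast B)\cong C^\infty(E)\otimes_{C^\infty(L)}\Gamma(B)$ using the two natural actions. You supply considerably more detail than the paper (which dispatches the lemma in a few lines), and your closing remark about obtaining everything from the flow of $X$ is a nice alternative viewpoint not mentioned there.
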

\begin{proof}
Recall that an action of $\mf X_{lin}(E)$ on a vector bundle $F$ is a flat $\mf{gl}(E)$-connection on the vector bundle $F$. As the action on $\Gamma(E^\ast)$ is equivalent to a $\mf{gl}(E)$-connection on $E^\ast$, one can dualize this connection, and extend it via the Leibniz rule to tensor powers. \\
Finally, to extend the action to the pullback, we recall that $\Gamma(\pi^\ast (E^\ast)) = C^\infty(E) \otimes_{C^\infty(L)} \Gamma(E^\ast)$, and that both factors have a natural action of $\mf X_{lin}(E)$.\\
Setting for $g\otimes X \in C^\infty(E)\otimes_{C^\infty(L)} \mf X_{lin}(E)$, $f \otimes \alpha \in \Gamma(\pi^\ast(E^\ast))$
$$
(g\otimes X)\cdot(f \otimes \alpha) = gX(f) \otimes \alpha + gf \otimes X(\alpha).
$$
Since duals and tensors commute with pullbacks, the result follows.
\end{proof}
Using these actions, we can describe a dg-Lie algebroid structure on the resolution \eqref{diag:resgle}:
\begin{prop}\label{prop:glebracket}
The complex \eqref{diag:resgle} carries a dg-Lie algebroid structure, where the binary bracket is given by the analogue of equation \eqref{eq:glnbracket} on elements of degree $-1$ and lower, the bracket involving an element $f\otimes X \in \Gamma(\pi^\ast(\mf {gl}(E)))$ and an element $g\otimes \alpha \otimes e \in  \Gamma(\pi^\ast(\wedge^k E^\ast \otimes E))$ for $f,g \in C^\infty(E)$, $X \in \mf X_{lin}(E)$, $\alpha \in \Gamma(\wedge^k E^\ast)$, $e \in \Gamma(E)$ is given by 
$$
[f\otimes X,g\otimes \alpha \otimes e] = fX(g) \otimes \alpha \otimes e + fg \otimes X\cdot (\alpha \otimes e).
$$
The bracket involving two elements of $\Gamma(\pi^\ast(\mf {gl}(E)))$ is given by the action Lie algebroid bracket.
\end{prop}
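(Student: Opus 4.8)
The plan is to verify the axioms of a dg-Lie algebroid one degree-stratum at a time, reducing the ``purely algebraic'' parts to the already-established $\mf{gl}(V)$-case of section \ref{sec:vf00} and the ``degree-$0$'' parts to Lemmas \ref{lem:linvf} and \ref{lem:cdoact}. That $\ell_1 = d_\bullet$ squares to zero is immediate from \eqref{eq:contreuler} (and is part of Proposition \ref{prop:resgle}). Since every operation involved is $C^\infty(E)$-local, I would begin by choosing an open cover $\{U_\alpha\}$ of $L$ trivializing $E$; over $\pi^{-1}(U_\alpha) \cong U_\alpha \times \R^n$ the pullback complex $\Gamma(\pi^\ast(\wedge^\bullet E^\ast \otimes E))$ in degrees $\leq -1$ becomes the constant extension over $U_\alpha \times \R^n$ of the degrees $\leq -1$ part of the complex of section \ref{sec:vf00} with $V = \R^n$, and the bracket there is, by the statement of the proposition, exactly the fiberwise version of \eqref{eq:glnbracket}. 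All identities in which no entry has degree $0$ involve only wedge products and contractions (the $\delta_{k_1,1}$ anchor term of \eqref{eq:glnbracket} is absent since each $k_i \geq 2$), so graded antisymmetry, the $C^\infty(E)$-bilinearity, the derivation property of $d$ with respect to $\ell_2$, and the Jacobi identity for $\ell_2$, all \emph{restricted to degrees $\leq -1$}, are inherited verbatim from the dg-Lie algebroid of section \ref{sec:vf00}.

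It remains to treat the brackets with an entry $X \in \Gamma(\pi^\ast(\mf{gl}(E)))$ of degree $0$. First I would check that the two displayed formulas are well-defined, i.e. independent of the presentations $f\otimes X$ and $g\otimes\alpha\otimes e$, and that on the subsheaf $\Gamma(\pi^\ast(\End(E))) \subseteq \Gamma(\pi^\ast(\mf{gl}(E)))$ they reduce to the fiberwise \eqref{eq:glnbracket}: this is where the $\delta_{k_1,1}\,\rho(\cdot)(f_2)$-term of \eqref{eq:glnbracket} gets reinterpreted as the derivation $X(g)$ and the endomorphism part of $X\cdot(\alpha\otimes e)$ reproduces the Nijenhuis--Richardson contraction term, both being immediate from the formula for $a$ in Lemma \ref{lem:linvf}i) and the construction of the action in Lemma \ref{lem:cdoact}. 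Graded antisymmetry then holds by construction (the mixed bracket is \emph{defined} antisymmetrically, and on degree $0$ it is the Lie bracket of $\mf X_{lin}(E)$), and the anchor axioms are immediate: $\rho\circ\ell_1 = m\circ d_2 = 0$ is part of Proposition \ref{prop:resgle}, $\rho(\ell_2(X,Y)) = [\rho(X),\rho(Y)]$ is the anchor property of the action Lie algebroid $\pi^\ast(\mf{gl}(E))$ from the Remark following Lemma \ref{lem:tangenttol}, and the anchored Leibniz rule $\ell_2(X, g\Phi) = g\,\ell_2(X,\Phi) + \rho(X)(g)\,\Phi$ is visible in the displayed formula.

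For the Jacobi identity with at least one degree-$0$ entry I would split into three cases. With three degree-$0$ entries it is the Jacobi identity of the Lie algebroid $\pi^\ast(\mf{gl}(E))$. With two degree-$0$ entries $X,Y$ and one lower entry $\Phi$, the identity asserts exactly that the extended action of Lemma \ref{lem:cdoact} --- a flat $\mf{gl}(E)$-connection pulled back and tensored up to $\wedge^\bullet E^\ast \otimes E$ --- together with the $C^\infty(E)$-module structure defines a genuine representation of $\pi^\ast(\mf{gl}(E))$, which follows from flatness and the Leibniz rule for the connection. With one degree-$0$ entry $X$ and two lower entries $\Phi,\Psi$, the identity reduces to $X$ acting as a derivation of the anchor-free Nijenhuis--Richardson bracket on $\Gamma(\pi^\ast(\wedge^\bullet E^\ast \otimes E))$; since that bracket is built functorially from wedge products and contractions and the action of Lemma \ref{lem:cdoact} is a derivation compatible with all of them, this is a direct computation.

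Finally, the derivation property of $d$ with a degree-$0$ entry: as $d X = 0$ (there is nothing in degree $1$), one must show $d(\ell_2(X,\Phi)) = \ell_2(X, d\Phi)$, i.e. that the differential $d_\bullet$ of \eqref{eq:contreuler} is $\mf X_{lin}(E)$-equivariant. Since $d_\bullet = \iota_\epsilon$ is contraction with the tautological Euler section $\epsilon\in\Gamma(\pi^\ast E)$, and contraction satisfies $X\cdot\iota_\epsilon(\Phi) = \iota_{X\cdot\epsilon}(\Phi) + \iota_\epsilon(X\cdot\Phi)$, this comes down to $X\cdot\epsilon = 0$ for every $X\in\mf X_{lin}(E)$ --- true because the flow of such an $X$ on $E$ is by vector-bundle automorphisms, which preserve the tautological section (equivalently, a one-line computation with $\epsilon = y^i e_i$). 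Declaring $\ell_k = 0$ for $k\geq 3$, the only nonvanishing higher Jacobi identity is $J_3$, which is the Jacobi identity for $\ell_2$ already established; all $J_n$, $n\geq 4$, vanish since every summand contains some $\ell_k$ with $k\geq 3$. The one point that needs genuine care, rather than formal bookkeeping, is the well-definedness of the mixed degree-$0$/lower-degree bracket and its precise compatibility with \eqref{eq:glnbracket} on the overlap $\End(E) \subseteq \mf{gl}(E)$; everything else is local triviality plus section \ref{sec:vf00}, or a formal consequence of Lemmas \ref{lem:linvf} and \ref{lem:cdoact}.
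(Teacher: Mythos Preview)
The paper states this proposition without proof; it simply describes the bracket and moves on to the next subsection. Your proposal therefore supplies what the paper omits, and it is correct. Your organisation by degree-strata is the natural one: the purely negative-degree identities reduce, via a trivialising cover, to the fiberwise Nijenhuis--Richardson computation already done in section~\ref{sec:vf00}, while the degree-$0$ identities unpack the content of Lemmas~\ref{lem:linvf} and~\ref{lem:cdoact}. The one genuinely non-formal step you isolate --- that $d_\bullet = \iota_\epsilon$ commutes with the $\mf X_{lin}(E)$-action because $X\cdot\epsilon = 0$ --- is exactly the right thing to check, and your argument (the flow of a linear vector field is by bundle automorphisms, which fix the tautological section) is correct; the coordinate verification $X\cdot(y^i\otimes e_i)=X(y^i)\otimes e_i + y^i\otimes X\cdot e_i = 0$ works as well, using that the actions on $\Gamma(E^\ast)$ and $\Gamma(E)$ are dual. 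Your remark that the consistency of the mixed bracket with \eqref{eq:glnbracket} on the overlap $\End(E)\subset\mf{gl}(E)$ is the point needing care is accurate and is what makes the whole construction hang together.
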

\subsection{Linear vector fields preserving a subbundle}\label{sec:subbundle}
Let $\pi:E\to L$ be a vector bundle and $D\subset E$ a vector subbundle. In this section we combine sections \ref{sec:subsp} and \ref{sec:zerosec} to give a projective resolution of the subfoliation $\mc F_D\subset\mc F_L$ given by
$$
\mc F_D(E) = \{X\in \mc F_L(E) \mid X(I_D)\subset I_D\},
$$
where $I_D$ is the vanishing ideal of $D\subset E$. In other words, $\mc F_D(E)$ consists of all vector fields which are tangent to the subbundle $D$. Note that when $D = 0$, we are in the situation of section \ref{sec:zerosec}.\\
This can be approached in a similar way as $\mc F_L$: define $$\mf X_{lin}(E,D):= \{X\in \mf X_{lin}(E) \mid X(\Gamma(\text{Ann}(D))) \subset\Gamma(\text{Ann}(D))\},$$ where $\Gamma(\text{Ann}(D))\subset \Gamma(E^\ast)$ is viewed as a subset of $C^\infty(E)$.\\

Let for $i \geq 2$ $K_i\subset \wedge^i E^\ast \otimes E$ be the subbundle given by
$$
K_i:= \{\phi\in \wedge^i E^\ast \otimes E \mid \forall d\in D, \forall e_1,\dots, e_{i-1} \in E: \phi(d,e_1,\dots, e_{i-1})\in D\},
$$
Here the condition should be read fiberwise. Further, define 
$\mf {gl}(E,D) \subset \mf {gl}(E)
$ as the subbundle whose sections are precisely $\mf X_{lin}(E,D)$. Then the analogue of Proposition \ref{prop:subspres} holds, and we find:
\begin{prop}\label{prop:subbpres}
\begin{itemize}
    \item []
    \item[i)] For $j\geq 2$, the differential $$d_j:\Gamma(\pi^\ast(\wedge^j E^\ast \otimes E)) \to \Gamma(\pi^\ast(\wedge^{j-1}E^\ast \otimes E)) $$ as in \eqref{diag:resgle} restricts to a map
    $$
    d_j:\Gamma(\pi^\ast(K_j))\to \Gamma(\pi^\ast(K_{j-1})),
    $$
    and $d_2(\Gamma(\pi^\ast(K_2)))\subset \Gamma(\pi^\ast(\mf {gl}(E,D)))$.
    \item[ii)] The complex 
    \begin{equation}\label{eq:resgled}
        \begin{tikzcd}
        0 \arrow{r}& \Gamma(\pi^\ast(K_n)) \arrow{r}{d_n} &\Gamma(\pi^\ast(K_{n-1})) \arrow{r}{d_{n-1}}&\dots \arrow{r}{d_2}& \Gamma(\pi^\ast(\mf {gl}(E,D)))\arrow{r}{\rho_D} & \mc F_D(E) \arrow{r}& 0
        \end{tikzcd}
    \end{equation}
    is exact.
    \item[iii)] The bracket as described in Proposition \ref{prop:glebracket} restricts to \eqref{eq:resgled}.
\end{itemize}
Consequently, \eqref{eq:resgled} with the restrictions of the differential and bracket is a universal $L_\infty$-algebroid of the foliation $\mc F_D$, which is minimal at points of $L$.
\end{prop}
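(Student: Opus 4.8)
The plan is to treat Proposition \ref{prop:subbpres} as the fiberwise/local fusion of the two constructions already carried out: the reduction from $\mc F_0$ to $\mc F_W$ in Section \ref{sec:subsp}, and the passage from a vector space to a vector bundle in Section \ref{sec:zerosec}. For item i), I would describe $d_j$ invariantly: writing $\epsilon\in\Gamma(\pi^\ast E)$ for the tautological (Euler) section, one has $d_j(\phi)(v_1,\dots,v_{j-1})=\phi(\epsilon,v_1,\dots,v_{j-1})$ for $\phi\in\Gamma(\pi^\ast(\wedge^j E^\ast\otimes E))=\Gamma(\pi^\ast\Hom(\wedge^j E,E))$. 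If $\phi\in\Gamma(\pi^\ast(K_j))$ and $d\in D$, then by antisymmetry in the $E$-arguments $d_j(\phi)(d,v_1,\dots,v_{j-2})=-\phi(d,\epsilon,v_1,\dots,v_{j-2})\in D$, so $d_j(\phi)\in\Gamma(\pi^\ast(K_{j-1}))$; the same computation for $j=2$ gives $d_2(\phi)\in\End(E,D)\subset\mf{gl}(E,D)$. Both are pointwise linear-algebra statements, so they hold.

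For item ii), I would localize over $L$: choose an open cover of $L$ over which $E$, the subbundle $D$, and a complement $C$ of $D$ trivialize simultaneously, with fibered coordinates on $E$ adapted to the splitting $E=D\oplus C$. Over the preimage of such a chart, $\pi^\ast(K_i)$ splits as $\pi^\ast(\wedge^i E^\ast\otimes D)\oplus\pi^\ast(\wedge^i C^\ast\otimes C)$ and $\pi^\ast(\mf{gl}(E,D))$ splits accordingly, exactly as in the proof of Proposition \ref{prop:subspres}; the differential (contraction with $\epsilon$) decomposes as $\partial_D+\partial_C$, where $\partial_D$ contracts with the full Euler section and $\partial_C$ with the Euler section of the $C$-directions only. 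Each summand is the pulled-back (constant-coefficient) extension of a Koszul complex, hence exact by Lemma \ref{lem:koszulexact}, while exactness at the degree-$0$ spot is the computation of Proposition \ref{prop:resgle} combined with Lemma \ref{lem:kerver} applied to the $D$-preserving blocks, giving $\ker\rho_D=d_2(\Gamma(\pi^\ast(K_2)))$. Since exactness is a local property, this settles ii).

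For item iii), I would check directly that the bracket of Proposition \ref{prop:glebracket} preserves the subbundles. The part in degrees $\leq-1$ is the fiberwise Nijenhuis--Richardson-type bracket, which restricts to the $\pi^\ast(K_\bullet)$ by the pointwise computation of Proposition \ref{prop:subspres}ii) (Lemma \ref{lem:bracketrestr}); the bracket of two sections of $\Gamma(\pi^\ast(\mf{gl}(E,D)))$ stays there because $\mf X_{lin}(E,D)$ is a Lie subalgebra of $\mf X_{lin}(E)$ (vector fields tangent to $D$ are closed under the bracket), so $\mf{gl}(E,D)$ is a Lie subalgebroid; and the mixed bracket lands in $\pi^\ast(K_\bullet)$ because the $\mf{gl}(E,D)$-action on $\pi^\ast(\wedge^k E^\ast\otimes E)$ is induced from an action preserving $D$, hence preserves the $D$-stability condition defining $K_k$. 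Combining i)--iii), \eqref{eq:resgled} is a projective resolution of $\mc F_D$ carrying a dg-Lie algebroid structure whose anchor has image $\mc F_D$, so by \cite[Corollary 2.9]{univlinfty} it is a universal $L_\infty$-algebroid; it is minimal at points of $L$ because every $d_j$ (and $d_2$) is contraction with the Euler section $\epsilon$, which vanishes along $L$, exactly as explained at the end of Section \ref{sec:vf00}.

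I expect almost all of this to be routine pointwise and local bookkeeping. The one place needing genuine care is the behavior at degree $0$: showing that $d_2$ lands in $\mf{gl}(E,D)$ rather than a larger sub-bundle, and, more delicately, that $\ker\rho_D$ is exactly $d_2(\Gamma(\pi^\ast(K_2)))$ and not something larger, since $\mf{gl}(E,D)$ contains the horizontal ($TL$) directions in addition to $\End(E,D)$. This is handled by the argument of Lemma \ref{lem:kerver} (the kernel of the anchor is vertical) restricted to the $D$-preserving blocks, but it is the step where the superposition of the two earlier constructions is not completely formal and is worth writing out.
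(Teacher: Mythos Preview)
Your proposal is correct and follows exactly the route the paper intends: the paper gives no separate proof for Proposition~\ref{prop:subbpres} beyond the sentence ``the analogue of Proposition~\ref{prop:subspres} holds'', and your argument is precisely that analogue---localize so that $E$, $D$ and a complement $C$ trivialize, split $K_\bullet$ as $(\wedge^\bullet E^\ast\otimes D)\oplus(\wedge^\bullet C^\ast\otimes C)$, and invoke Koszul exactness on each summand, with Lemma~\ref{lem:kerver} handling the horizontal $TL$-part at degree~$0$. You have simply written out in full what the paper leaves implicit, including the one non-formal step you flag at degree~$0$.
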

\subsection{Vector fields preserving a volume form}\label{sec:vol1}
In section \ref{sec:vol0} we constructed the universal $L_\infty$-algebroid for the foliation given by the action of $\mf {sl}(V)$ for an $n$-dimensional vector space $V$. Although we made the choice of a volume form, this was not strictly necessary in this case. 

Now if we want to generalize this example to the case of higher dimensional leaves, i.e. a linear foliation on a vector bundle $\pi:E\to L$ of rank $n$, such that the zero section is a leaf and the transverse foliation on the fibers $E_x$ for $x\in L$ is isomorphic to the one given by the action of $\mf{sl}(E_x)$, one approach would be to take an appropriate Lie subalgebroid $\mf{sl}(E,\mu)$ of $\mf {gl}(E)$ and then looking at the induced foliation on $E$. To generalize the special linear subalgebra, the sections of the Lie subalgebroid $\mf{sl}(E,\mu)$ should sit in a short exact sequence
\begin{equation}\label{diag:sessln}
\begin{tikzcd}
0\arrow{r} &\Gamma(End_0(E)) \arrow{r}{a}& \Gamma(\mf{sl}(E,\mu)) \arrow{r}{\rho}& \mf X(L)\arrow{r}& 0,
\end{tikzcd}
\end{equation}
where $End_0(E)$ is the kernel of the vector bundle map $\Tr:End(E) \to \mb R$ given by the trace. 
\subsubsection{The projective resolution}
Assume that $E$ is orientable and pick a volume form $\mu \in \Gamma(\wedge^n E^\ast)$. Then we can identify the Lie algebroid $\mf{sl}(E,\mu)$ as follows:
Let
$$
\mf X_{lin}^\mu(E) = \{X \in \mf X_{lin}(E) \mid X\cdot \mu = 0\},
$$
where $X\cdot \mu$ is defined as in Lemma \ref{lem:cdoact}. 
A local computation shows that there exists a vector bundle $\mf{sl}(E,\mu)$ satisfying $\eqref{diag:sessln}$ such that $$\Gamma(\mf{sl}(E,\mu)) = \mf X_{lin}^\mu(E)$$
analogous to Lemma \ref{lem:linvf}.
To construct the projective resolution, we adopt a similar approach as in the case where $L$ was a point. Define
$$
\widehat{\Tr}:\mf X_{lin}(E)\to \Gamma(\wedge^n E^\ast)
$$
by 
$$
\widehat{\Tr}(X) = - X\cdot \mu.
$$
Clearly, $\mf X_{lin}^\mu(E) = \ker(\widehat{\Tr})$. Moreover, this really extends the trace:
\begin{lem} \label{lem:trhatistr}For $A \in \Gamma(End(E))$,
$$
\widehat{\Tr}\circ a (A)= \Tr(A) \mu.
$$
\end{lem}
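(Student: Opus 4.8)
The statement is a compatibility of the map $\widehat{\Tr}(X) = -X\cdot\mu$ on $\mf X_{lin}(E)$ with the ordinary trace, precomposed with the map $a:\Gamma(\End(E))\to \mf X_{lin}(E)$ from Lemma \ref{lem:linvf}. Since both sides are $C^\infty(L)$-linear in $A$ (the right-hand side obviously, the left-hand side because $a$ is $C^\infty(L)$-linear and the action $X\mapsto X\cdot\mu$ is $C^\infty(L)$-linear in its $\mf X_{lin}(E)$-slot up to the anchor, which vanishes on $a(\End(E))$ since $\rho\circ a = 0$), the identity is local on $L$, so the plan is to reduce immediately to a trivializing chart and a local frame.

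First I would pick an open set $U\subset L$ over which $E$ trivializes, with a local frame $\{e_i\}_{i=1}^n$ of $E$, dual frame $\{e^i\}$ of $E^\ast$, and fiber coordinates $\{y^i\}$, so that $E|_U \cong U\times V$ and we are exactly in the linear situation of section \ref{sec:vf00}. Up to scaling the frame we may take $\mu = e^1\wedge\cdots\wedge e^n$ locally. For $A\in\Gamma(\End(E)|_U)$ with matrix $(A^i_j)$, the definition of $a$ in Lemma \ref{lem:linvf}i) gives $a(A) = \frac{d}{dt}\big|_{t=0}(\exp(-tA))^\ast$ acting on fiberwise-linear functions, which on $\Gamma(E^\ast)$ is minus the dual-endomorphism action; concretely $a(A)(e^i) = -A^i_j\, e^j$, i.e.\ $a(A)$ is the linear vector field $-A^i_j y^j\partial_{y^i}$ once we identify $\Gamma(E^\ast)$ with fiberwise-linear functions. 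The action of $a(A)$ on $\wedge^n E^\ast$ is then extended by the Leibniz rule as in Lemma \ref{lem:cdoact}.

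Next I would compute $a(A)\cdot\mu$ directly: extending the action $a(A)(e^i) = -A^i_j e^j$ as a derivation of $\wedge^\bullet E^\ast$,
\begin{align*}
a(A)\cdot(e^1\wedge\cdots\wedge e^n) &= \sum_{k=1}^n e^1\wedge\cdots\wedge a(A)(e^k)\wedge\cdots\wedge e^n\\
&= -\sum_{k=1}^n A^k_j\, e^1\wedge\cdots\wedge e^j\wedge\cdots\wedge e^n\\
&= -\Big(\sum_{k=1}^n A^k_k\Big)\, e^1\wedge\cdots\wedge e^n = -\Tr(A)\,\mu,
\end{align*}
where only the $j=k$ terms survive the wedge. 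Hence $\widehat{\Tr}(a(A)) = -a(A)\cdot\mu = \Tr(A)\,\mu$ on $U$. Since $U$ was an arbitrary trivializing chart and both sides are globally defined $C^\infty(L)$-linear expressions in $A$, they agree on all of $L$, which is the claim.

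The computation is entirely routine; the only point that needs a little care — and what I would regard as the ``main obstacle'', though it is minor — is pinning down the correct sign and the precise form of $a(A)$ as a vector field, i.e.\ making sure the conventions in Lemma \ref{lem:linvf}i) (the $-tA$ in the exponential), in Lemma \ref{lem:cdoact} (dualizing the connection and extending by Leibniz), and in the definition $\widehat{\Tr}(X) = -X\cdot\mu$ all line up so that the two minus signs cancel and one genuinely gets $+\Tr(A)\mu$ rather than $-\Tr(A)\mu$. Once the conventions are fixed this is a one-line frame computation, so I would keep the written proof to the bare remark that it is a local computation reducing to the displayed Leibniz expansion above.
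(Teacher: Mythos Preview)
Your proposal is correct and follows essentially the same approach as the paper: pick a local frame with $\mu = e^1\wedge\cdots\wedge e^n$, compute $a(A)\cdot e^i = -A^i_j e^j$ from the definition of $a$, and expand $a(A)\cdot\mu$ by the Leibniz rule to get $-\Tr(A)\mu$. The paper's proof is precisely this four-line computation, without the surrounding remarks on $C^\infty(L)$-linearity and locality (which are fine but not needed for such a short local identity).
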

\begin{proof}
Pick a local frame $\{e_i\}_{i = 1}^n$ for $E$ and a dual frame $\{e^i\}_{i = 1}^n$ for $E^\ast$, such that $\mu = e^1\wedge \dots \wedge e^n$. Then 
\begin{align*}
    \widehat{\Tr}(a(A)) &= - \sum_{i = 1}^n e^1\wedge \dots \wedge\left.\frac{d}{dt}\right|_{t = 0}  e^i\circ exp(-tA) \wedge \dots \wedge e^n\\
    &= -\sum_{i = 1}^n e^1 \wedge \dots \wedge e^i\circ \left.\frac{d}{dt}\right|_{t = 0} exp(-tA) \wedge \dots \wedge e^n\\
    &= \sum_{i = 1}^n e^1 \wedge \dots \wedge e^i \circ A \wedge \dots \wedge e^n\\
    &= \Tr(A)\mu.
\end{align*}
\end{proof}
We will now apply the same ideas as in the case where $L$ is a point. As in Lemma \ref{lem:kerver}, the kernel of $\rho_\mu:\Gamma(\pi^\ast(\mf {sl}(E,\mu))) \to \mf X(E)$ is contained in $\Gamma(\pi^\ast(\End_0(E)))$, so we proceed in a similar way as in section \ref{sec:vol0}. Define for $i = 2,\dots, n$ the vector bundle map
$$
\phi_i: \wedge^i E^\ast\otimes E \to  \wedge^{i-1}E^\ast
$$
over $L$ by
$$
\phi_i(\alpha\otimes e) = (-1)^{i-1}\iota_e(\alpha)
$$
for $\alpha \in \wedge^i E^\ast, e \in E$. Setting $K_i = \ker (\phi_i)$, we obtain the following analog of Proposition \ref{prop:ressl}:
\begin{prop}\label{prop:sleres}
\begin{equation}\label{diag:ressle}
\begin{tikzcd}
0 \arrow{r} & \Gamma(\pi^\ast(\wedge^n E^\ast)) \arrow{r}{d_n\phi_n^{-1}\partial_n} & \Gamma(\pi^\ast(K_{n-1})) \arrow{r}{d_{n-1}} &\dots \arrow{r}{d_2} & \Gamma(\pi^\ast(\mf{sl}(E,\mu))) \arrow{r}{\rho_\mu} & \mc F_L^\mu(E) \arrow{r}& 0
\end{tikzcd}
\end{equation}
is exact.
\end{prop}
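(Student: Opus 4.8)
The plan is to reduce the statement to the trivial-bundle case, where \eqref{diag:ressle} becomes the complex \eqref{diag:ressln} of Proposition~\ref{prop:ressl} with harmless extra parameters, exactly as in the proof of Proposition~\ref{prop:resgle}. Every term of \eqref{diag:ressle} is the module of sections of a vector bundle over $E$, hence a sheaf of $C^\infty_E$-modules admitting partitions of unity, so the complex of global sections is exact provided it is exact over the members of some open cover of $E$. Pick an open cover $\{U_\alpha\}$ of $L$ over which $E$ is trivial and, shrinking if necessary, over which there is a frame $\{e_i\}_{i=1}^n$ of $E$ with dual frame $\{e^i\}_{i=1}^n$ such that $\mu|_{U_\alpha}=e^1\wedge\dots\wedge e^n$; this is possible because $\mu$ is nowhere vanishing. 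Then $\pi^{-1}(U_\alpha)\cong U_\alpha\times\R^n$ and $\pi^\ast F$ is trivial over $\pi^{-1}(U_\alpha)$ for every vector bundle $F\to L$.

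Next I would identify the complex over a chart. Writing $(x,y)$ for coordinates on $\pi^{-1}(U_\alpha)\cong U_\alpha\times\R^n$ with $y=(y^1,\dots,y^n)$ dual to $\{e_i\}$, the terms $\Gamma(\pi^\ast(\wedge^i E^\ast\otimes E))$, $\Gamma(\pi^\ast K_i)$ and $\Gamma(\pi^\ast(\wedge^n E^\ast))$ of \eqref{diag:ressle} become the corresponding terms of \eqref{diag:ressln}, now with coefficient ring $C^\infty(U_\alpha\times\R^n)$ in place of $C^\infty(\R^n)$; the differential $d_i$ is contraction with $\epsilon=y^i e_i$, which in these coordinates is literally $y^i\iota_{e_i}$ as in \eqref{eq:resgln}, the maps $\phi_i$ and $\partial_\bullet$ coincide with those of section~\ref{sec:vol0}, and $d_n\phi_n^{-1}\partial_n$ is unchanged. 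Thus over the chart \eqref{diag:ressle} is \eqref{diag:ressln} with the variables $x$ as inert parameters, and exactness at $\Gamma(\pi^\ast(\wedge^n E^\ast))$ and at $\Gamma(\pi^\ast K_i)$ for $i=1,\dots,n-1$ follows from the proof of Proposition~\ref{prop:ressl}: that proof is a diagram chase whose only ingredient is Lemma~\ref{lem:koszulexact}, and the parametrized analogue of Lemma~\ref{lem:koszulexact} holds over $U_\alpha\times\R^n$ because it is the same Koszul-type computation carried out with $y^1,\dots,y^n$ as the regular sequence over $C^\infty(U_\alpha\times\R^n)$, the contracting homotopies being $C^\infty(U_\alpha)$-linear and hence allowing arbitrary smooth dependence on $x$.

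The one term requiring extra care is the degree-$0$ term $\Gamma(\pi^\ast(\mf{sl}(E,\mu)))$, whose rank exceeds that of $\mf{sl}(\R^n)$ by $\dim L$. Since the anchor of the transitive Lie algebroid $\mf{sl}(E,\mu)$ is a surjective bundle map onto $TL$ with kernel the subbundle $\End_0(E)$, a smooth splitting gives $\mf{sl}(E,\mu)\cong\End_0(E)\oplus TL$ as vector bundles over $L$, whence $\pi^\ast(\mf{sl}(E,\mu))\cong\pi^\ast(\End_0(E))\oplus\pi^\ast(TL)$ over $E$. By the vector-bundle analogue of Lemma~\ref{lem:kerver} recalled before the statement, $\ker(\rho_\mu)\subset\Gamma(\pi^\ast(\End_0(E)))$; moreover $d_2$ takes values in $\Gamma(\pi^\ast(\End_0(E)))$ because the identity $\Tr\circ d_2=-\partial_1\circ\phi_2$ of section~\ref{sec:vol0} holds pointwise on $L$ and $K_2=\ker\phi_2$. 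Hence the homology of \eqref{diag:ressle} at degree $0$ ignores the summand $\pi^\ast(TL)$, and on $\Gamma(\pi^\ast(\End_0(E)))$ the restriction of $\rho_\mu$ is, over each chart, exactly the fibrewise $\mf{sl}(\R^n)$-anchor with $x$ as parameter, so $\ker(\rho_\mu)=\mathrm{im}(d_2)$ again follows from the parametrized Proposition~\ref{prop:ressl}. Surjectivity of $\rho_\mu$ onto $\mc F_L^\mu(E)$ is immediate, $\mc F_L^\mu(E)$ being by definition the image of the multiplication map $C^\infty(E)\otimes_{C^\infty(L)}\mf X_{lin}^\mu(E)\to\mf X(E)$ (the analogue of Lemma~\ref{lem:tangenttol}). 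Patching the chart-wise statements with a partition of unity subordinate to $\{\pi^{-1}(U_\alpha)\}$ then gives exactness of \eqref{diag:ressle}.

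I expect the degree-$0$ term to be the main obstacle: one has to disentangle cleanly the anchor directions of $\mf{sl}(E,\mu)$, on which $\rho_\mu$ is injective and which therefore contribute nothing to homology, from the $\End_0(E)$-directions, which are governed by the vector-space computation of Proposition~\ref{prop:ressl}. The bundle analogue of Lemma~\ref{lem:kerver}, the splitting of the anchor, and the local triviality of $\mu$ are precisely what make the reduction work; the remaining points — that $d_i$, $\phi_i$, $\partial_\bullet$ and $\rho_\mu$ genuinely localize to the formulas of section~\ref{sec:vol0}, and that Lemma~\ref{lem:koszulexact} is robust under smooth parameters — are routine.
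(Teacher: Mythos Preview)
Your proposal is correct and follows precisely the approach the paper intends: the paper states Proposition~\ref{prop:sleres} as the ``analog of Proposition~\ref{prop:ressl}'' without writing out a proof, relying on the same local-trivialization argument used for Proposition~\ref{prop:resgle}. Your treatment of the degree-$0$ term, separating the $\End_0(E)$-directions from the anchor directions via the bundle analogue of Lemma~\ref{lem:kerver} and the splitting of the Atiyah sequence, is exactly the extra bookkeeping the paper leaves implicit when passing from the vector-space to the vector-bundle setting.
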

\subsubsection{The $L_\infty$-algebroid structure}
Analogous to section \ref{sec:bracketssl}, to define the $L_\infty$-algebroid structure on the resolution \eqref{diag:ressle}, we would like to restrict the bracket as described in Proposition \ref{prop:glebracket} to the kernel of the morphisms $\phi_k$ for $k\in \{1,\dots, n-1\}$. For elements of degrees $-1$ and lower and for two elements from $\Gamma(\pi^\ast(\mf {sl}(E,\mu)))$ this is clear, as this is just the fiberwise extension of Lemma \ref{lem:bracketrestr}. \\
For the bracket between $\Gamma(\pi^\ast(\mf{sl}(E,\mu)))$ and $\Gamma(\pi^\ast(K_q))$, take an element $f\otimes \alpha \otimes e \in \Gamma(K_q)$, where $f\in C^\infty(E), \alpha \in \Gamma(\wedge^q E^\ast), e \in \Gamma(E)$, and an element $ X\in \Gamma(\pi^\ast(\mf {sl}(E,\mu)))$, we compute
\begin{align*}
(-1)^{q-1}\phi_q([X,f\otimes \alpha \otimes e]) &= \rho_\mu(X)(f)\otimes \iota_e(\alpha) + f \otimes \iota_e(X\cdot\alpha) + f\otimes \iota_{X\cdot e}(\alpha) \\
&= \rho_\mu(X)(f)\iota_e(\alpha) + f \otimes X\cdot (\iota_e(\alpha))\\
&= 0,
\end{align*}
as $f\otimes \alpha \otimes e\in \Gamma(K_q)$ means that $\iota_e(\alpha) =0$. 
Hence, the bracket restricts to the subspaces given by the kernels of the $\phi_k$. Finally, analogous to section \ref{sec:bracketssl}, we use the natural action of $\Gamma(\pi^\ast(\mf {sl}(E,\mu)))$ on $\Gamma(\pi^\ast(\wedge^n E^\ast))\cong C^\infty(E)\mu$ to define the bracket between degree $0$ and $-n+1$.\\ Therefore, we again obtain a dg-Lie algebroid structure:
\begin{prop}\label{prop:slebracket}
The resolution \eqref{diag:ressle} of $\mc F_L^\mu(E)$ carries a dg-Lie algebroid structure, where the binary bracket is the restriction of the one described in Proposition \ref{prop:glebracket} when both entries have degrees $0,\dots, -n+2$, and the bracket $$[X,\tau]$$ of $X\in \Gamma(\pi^\ast\mf{sl}(E,\mu))$ with $\tau \in \Gamma(\pi^\ast(\wedge^n E^\ast))$ is given by the natural action of $X$ on $\tau$ as in Lemma \ref{lem:cdoact}.
\end{prop}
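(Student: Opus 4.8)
The exactness of \eqref{diag:ressle} is Proposition \ref{prop:sleres}, and \eqref{diag:ressle} is a resolution of $\mc F_L^\mu(E)$ by sections of vector bundles, so by the general machinery of \cite{univlinfty} it suffices to exhibit the claimed brackets, to check that they assemble into a dg-Lie algebroid, and to note minimality; universality then follows automatically. The plan is to realise \eqref{diag:ressle} as a mild enlargement of a sub-object of the dg-Lie algebroid $(\mathcal{D},d_\bullet,\llbracket-,-\rrbracket)$ of Proposition \ref{prop:glebracket} on \eqref{diag:resgle}. Concretely, let $\mathcal{D}'\subset\mathcal{D}$ be the graded submodule $\bigoplus_{k=1}^{n-1}\Gamma(\pi^\ast K_k)$ sitting in degrees $0,\dots,-(n-2)$ (with $\Gamma(\pi^\ast K_1):=\Gamma(\pi^\ast\mf{sl}(E,\mu))$), equipped with the restrictions of $d_\bullet$ and of the bracket. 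Since $\phi_n$ is a fibrewise isomorphism we have $K_n=0$, so every $\mathcal{D}$-bracket of two negative-degree elements of $\mathcal{D}'$ that would land in top degree actually lands in $\Gamma(\pi^\ast K_n)=0$; hence there is no conflict in declaring the top-degree slot of \eqref{diag:ressle} to be the \emph{different} bundle $\Gamma(\pi^\ast(\wedge^nE^\ast))$, adjoined in degree $-(n-1)$ with incoming differential $d_n(\phi_n)^{-1}\partial_n$, with all brackets into it from negative-degree slots set to zero, and with $\llbracket X,\tau\rrbracket:=X\cdot\tau$ for $X\in\Gamma(\pi^\ast\mf{sl}(E,\mu))$, $\tau\in\Gamma(\pi^\ast(\wedge^nE^\ast))$, the natural action of Lemma \ref{lem:cdoact}.

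First I would check that $\mathcal{D}'$ is a sub-dg-Lie algebroid of $\mathcal{D}$. Closure under $d_\bullet$ is immediate from the definition of the $K_k$. Closure of the bracket of two negative-degree elements under restriction is the fibrewise statement of Lemma \ref{lem:bracketrestr}, which may be verified over an open cover of $L$ trivialising $E$ and adapted to $\mu$, over which $\pi^\ast E$ is trivial; closure of the bracket of two degree-$0$ elements is the statement that $\mf{sl}(E,\mu)$ is a Lie subalgebroid of $\mf{gl}(E)$, which follows from \eqref{diag:sessln} and Lemma \ref{lem:trhatistr}; and closure of the mixed bracket of a degree-$0$ element with an element of $\Gamma(\pi^\ast K_q)$ is exactly the computation $(-1)^{q-1}\phi_q(\llbracket X,f\otimes\alpha\otimes e\rrbracket)=0$ carried out just before the statement. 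Thus $(\mathcal{D}',d_\bullet,\llbracket-,-\rrbracket)$ is a dg-Lie algebroid, whose cohomology is $\mc F_L^\mu(E)$ in degree $0$ and, by Proposition \ref{prop:sleres}, $\Gamma(\pi^\ast(\wedge^nE^\ast))$ in degree $-(n-2)$ (via the injective map $d_n(\phi_n)^{-1}\partial_n$); \eqref{diag:ressle} is precisely the complex obtained from $\mathcal{D}'$ by adjoining $\Gamma(\pi^\ast(\wedge^nE^\ast))$ in degree $-(n-1)$ so as to resolve this remaining cohomology.

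It then remains to verify the dg-Lie algebroid axioms for the enlargement. For degree reasons the only bracket involving the adjoined top slot that can be nonzero is $\llbracket X,\tau\rrbracket$ with $X$ of degree $0$: a bracket of $\tau$ with an element of negative degree, or of two top-slot elements, would land below degree $-(n-1)$ and hence vanishes, and all putative ternary brackets vanish likewise, so the proposed structure genuinely has no $\ell_k$ with $k\geq3$. That the differential is a derivation then reduces to the identity $d_n(\phi_n)^{-1}\partial_n(X\cdot\sigma)=X\cdot\bigl(d_n(\phi_n)^{-1}\partial_n(\sigma)\bigr)$, which holds because each of $\partial_n$, $\phi_n$ and $d_n$ is $\mf{sl}(E,\mu)$-equivariant: indeed $\partial_n$ and $d_n$ are contraction with the tautological Euler section $\epsilon\in\Gamma(\pi^\ast E)$ of \eqref{eq:contreuler}, which is $\mf X_{lin}(E)$-invariant, and $\phi_n$ is a natural contraction. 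Finally, the graded Jacobiator of the enlarged bracket with at least one top-slot argument reduces, again by degree, either to the (vanishing) Jacobiator of $\mathcal{D}'$, or, when two arguments have degree $0$, to flatness of the $\mf{sl}(E,\mu)$-representation on $\Gamma(\pi^\ast(\wedge^nE^\ast))$ — which holds since this action is the restriction of the flat $\mf{gl}(E)$-connection of Lemma \ref{lem:cdoact}, with $X\cdot(f\mu)=\rho_\mu(X)(f)\,\mu$ because $X\cdot\mu=0$. The anchor is $\rho_\mu$, the restriction to $\Gamma(\pi^\ast\mf{sl}(E,\mu))$ of the action-algebroid anchor of Lemma \ref{lem:tangenttol}; it intertwines the binary bracket with the Lie bracket of vector fields because $\mf{sl}(E,\mu)$ acts by fibrewise linear vector fields (Lemma \ref{lem:linvf}), and its image is $\mc F_L^\mu(E)$ by Proposition \ref{prop:sleres}.

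For minimality, each $d_k$ with $k\geq2$ is contraction with $\epsilon$, which vanishes along the zero section $L$, and the top differential $d_n(\phi_n)^{-1}\partial_n$ factors through $\partial_n$ and $d_n$, hence also vanishes along $L$; so the resolution is minimal at points of $L$, and as explained at the end of Section \ref{sec:vf00} any two such structures are then $L_\infty$-isomorphic in a neighbourhood of $L$. The one piece of genuine labour is the sign bookkeeping in the graded Jacobi and Leibniz checks above, but this is identical in spirit to the point case (Proposition \ref{prop:slbracket}): everything is forced by the equivariance of the structure maps with respect to the $\mf{sl}(E,\mu)$-action, and no new phenomenon appears.
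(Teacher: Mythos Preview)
Your proposal is correct and follows essentially the same approach as the paper: restrict the bracket of Proposition \ref{prop:glebracket} to the kernels $K_k$ (using the fibrewise extension of Lemma \ref{lem:bracketrestr} and the computation $\phi_q(\llbracket X,f\otimes\alpha\otimes e\rrbracket)=0$ preceding the statement), then adjoin the top slot $\Gamma(\pi^\ast(\wedge^nE^\ast))$ with bracket given by the natural $\mf{sl}(E,\mu)$-action. The paper leaves the verification of the dg-Lie algebroid axioms and minimality implicit via the phrase ``analogous to section \ref{sec:bracketssl}'', whereas you spell out the derivation property (equivariance of $d_n(\phi_n)^{-1}\partial_n$), the Jacobi identity, and the vanishing of the differentials along $L$; these extra details are all correct.
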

\subsection{Linear vector fields preserving a fiberwise symplectic form} \label{sec:sympleaf}
We now turn to the example of the symplectic case: let $\pi:E\to L$ be a symplectic vector bundle with $\omega \in \Gamma(\wedge^2 E^\ast)$ a non-degenerate skew-symmetric bilinear form. By now we know how to construct a Lie subalgebroid of $\mf {gl}(E)$ of linear vector fields preserving $\omega$: consider
$$
\mf X^\omega_{lin}(E) := \{X\in \mf X_{lin}(E)\mid X\cdot \omega = 0\},
$$
where $X\cdot \omega$ is defined as in Lemma \ref{lem:cdoact}. Note that $\mf X^\omega_{lin}(E)$ is closed under the Lie bracket of $\mf X_{lin}(E)$.\\
As in the previous section, there exists a vector bundle $\mf{sp}(E,\omega)$ over $L$, such that $$\Gamma(\mf {sp}(E,\omega)) = \mf X_{lin}^\omega(E).$$ We therefore obtain a Lie subalgebroid $\mf{sp}(E,\omega) \subset \mf {gl}(E)$ over $L$, which generates a linear foliation $\mc F_L^\omega$ on $E$. The zero section is a leaf, and the transverse foliation on $E_x$ for $x\in L$ is given by the standard $\mf {sp}(E_x,\omega_x)$-action on $E_x$.
\subsubsection{The projective resolution}
We proceed as in section \ref{sec:linsymp}. Define for $p = 1,\dots, n+1$ the vector bundle map $$\phi_p^\omega: \wedge^{p} E^\ast \otimes E \to  \wedge^{p-1} E^\ast \otimes  \wedge^2 E^\ast$$
over $L$, given by
$$
\phi_p^\omega(\alpha \otimes e) =  (-1)^{p-1}\iota_{e_i}(\alpha) \otimes e^i \wedge \iota_{e}(\omega),
$$
where $\{e_i\}_{i = 1}^n$ and $\{e^i\}_{i=1}^n$ are dual local frames of $E$ and $E^\ast$ respectively. Note that $\phi_p^\omega$ is independent of the choice of basis.\\

Define the differentials 
$$
d_p:\Gamma(\pi^\ast(\wedge^p E^\ast \otimes E)) \to \Gamma(\pi^\ast(\wedge^{p-1} E^\ast \otimes E))
$$
as in equation \eqref{eq:contreuler}, 
and
$$
\partial_p:\Gamma(\pi^\ast(\wedge^p E^\ast \otimes \wedge^2E^\ast)) \to \Gamma(\pi^\ast(\wedge^{p-1} E^\ast \otimes \wedge^2E^\ast)),
$$
given by 
$$
\partial_p(\alpha\otimes \tau) = -\iota_{\epsilon}(\alpha)\otimes \tau
$$
for $\alpha \in \Gamma(\pi^\ast(\wedge^p E^\ast))$, $\tau \in \Gamma(\pi^\ast(\wedge^2 E^\ast))$.
Then $\phi^\omega$ is a cochain map of degree $-1$, and setting $C_i := \coker(\phi^\omega_i)$, with induced differentials $\overline{\partial_\bullet}:\Gamma(\pi^\ast(C_p)) \to \Gamma(\pi^\ast(C_{p-1}))$ we can describe a projective resolution as in section \ref{sec:linsymp}:
\begin{prop}\label{prop:resspe}
The sequence
\begin{equation}\label{eq:resspe}
    \begin{tikzcd}
            0 \arrow{r}& \Gamma(\pi^\ast (C_{n+1})) \arrow{r}{\overline{\partial_n}} & \dots \arrow{r}{\overline{\partial_3}}  & \Gamma(\pi^\ast(C_3)) \arrow{r}{d_2\phi_2^{-1} \partial_2} & \Gamma(\pi^\ast(\mf{sp}(E,\omega))) \arrow{r}{\rho_\omega} &\mc F_L^\omega(E) \arrow{r}& 0
    \end{tikzcd}
\end{equation}
is exact. 
\end{prop}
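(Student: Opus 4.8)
The plan is to run the diagram chase from the proof of Proposition~\ref{prop:spres} verbatim, now at the level of $C^\infty(E)$-modules, after upgrading the two structural inputs of that chase --- exactness of the two reference complexes and the formal properties of $\phi^\omega$ --- from the point case to the vector-bundle case. The upgrade of the $d_\bullet$-complex is already in hand: \eqref{diag:resgle} is exact by Proposition~\ref{prop:resgle}. For the $\partial_\bullet$-complex, the key observation is that $\partial_p$ is contraction with the Euler section tensored with $\mathrm{id}_{\wedge^2E^\ast}$ and so does not involve $\omega$; over an open $U\subset L$ trivializing $E$ the bundle $\pi^\ast(\wedge^2E^\ast)$ trivializes over $\pi^{-1}(U)$, and there $(\Gamma(\pi^\ast(\wedge^\bullet E^\ast\otimes\wedge^2E^\ast)),\partial_\bullet)$ is the $C^\infty(U)$-linear extension of the complex obtained from \eqref{eq:koszexact2} by the exact functor $-\otimes_{C^\infty(V)}\Gamma(W)$ with $W=\wedge^2V^\ast$ (or a truncation thereof), which is exact by Lemma~\ref{lem:koszulexact}. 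Since exactness is a local condition, the global $\partial_\bullet$-complex is exact; this is exactly the reduction already used in the proof of Proposition~\ref{prop:resgle}.

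Next I would record the bundle analogues of Lemma~\ref{lem:propspo}. Each $\phi_p^\omega$ is a vector-bundle morphism over $L$ whose restriction to a fibre $E_x$ is the map $\phi_p^{\omega_x}$ of Section~\ref{sec:projressp}, so the pointwise statements of Lemma~\ref{lem:propspo} give that $\phi_1^\omega$ is surjective with kernel $\mf{sp}(E,\omega)$, that $\phi_p^\omega$ is injective for $p\geq 2$, and that $\phi_2^\omega$ is an isomorphism. In particular these maps have locally constant rank, so $C_p=\coker(\phi_p^\omega)$ is a genuine vector bundle over $L$, $(\phi_2^\omega)^{-1}$ is a smooth bundle map, and the cochain-map identity $\phi_p^\omega d_{p+1}+\partial_p\phi_{p+1}^\omega=0$ (which is fibrewise) holds globally, as already asserted before the statement. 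With these ingredients, the diagram chase of Proposition~\ref{prop:spres} carries over without change: for $4\leq p\leq n+1$ one lifts a $\overline{\partial}$-closed class in $\Gamma(\pi^\ast(C_p))$ through $d_\bullet$ using exactness of \eqref{diag:resgle}, then corrects it by a $\partial_\bullet$-coboundary using exactness of the $\partial_\bullet$-complex together with injectivity of $\phi_{p+1}^\omega$ and surjectivity of the earlier $\phi^\omega$; exactness at $\Gamma(\pi^\ast(C_3))$ and at $\Gamma(\pi^\ast(\mf{sp}(E,\omega)))$ is the same argument with $\phi_2^\omega$ an isomorphism and $\ker(\phi_1^\omega)=\mf{sp}(E,\omega)$; surjectivity of $\rho_\omega$ is the definition of $\mc F_L^\omega(E)$; and injectivity of $\overline{\partial_n}$ into $\Gamma(\pi^\ast(C_n))$ is immediate.

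The one place I expect to need care --- and the reason for upgrading the structural inputs to the bundle level \emph{before} running the chase --- is that every ``choose a preimage'' step must be available as a statement about $C^\infty(E)$-modules rather than fibre by fibre, so that no smoothness along $L$ is lost; once the reference complexes are known to be exact as complexes of sections and the relevant $\phi^\omega$ are injective resp.\ surjective as bundle maps, this is automatic and no genuinely new phenomenon appears. In particular, unlike the situation for $\mc F_L(E)$ --- where the kernel of the anchor required the separate argument of Lemma~\ref{lem:kerver} --- here surjectivity of $\rho_\omega$ is built into the definition of $\mc F_L^\omega(E)$, and the description of its kernel as $\mathrm{im}\big(d_2(\phi_2^\omega)^{-1}\partial_2\big)$ drops out of the chase exactly as in Proposition~\ref{prop:spres}.
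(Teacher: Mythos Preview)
Your proposal is correct and is exactly the paper's approach: the paper gives no separate proof and simply states that the resolution is obtained ``as in section~\ref{sec:linsymp}'', i.e., by rerunning the diagram chase of Proposition~\ref{prop:spres} once the bundle-level inputs (exactness of \eqref{diag:resgle} via Proposition~\ref{prop:resgle}, exactness of the $\partial_\bullet$-complex via local trivialisation and Lemma~\ref{lem:koszulexact}, and the fibrewise properties of $\phi^\omega$) are in place. One small slip in your informal summary of that chase: for $p\geq 4$ the argument of Proposition~\ref{prop:spres} uses only \emph{injectivity} of $\phi^\omega_{p-2}$ together with exactness of the two rows, not any ``surjectivity of the earlier $\phi^\omega$'' --- you may be momentarily conflating this with the $\mf{sl}$ case of Proposition~\ref{prop:ressl}, but since you are invoking the chase verbatim this does not affect the validity of your argument.
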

\subsubsection{The (partial) $L_\infty$-algebroid structure}
To obtain the binary brackets, we follow the same approach as in section \ref{sec:linsymp}. Let 
$$
r^\omega_p:\Gamma(\wedge^p E^\ast \otimes \wedge^2 E^\ast) \to \Gamma(\wedge^{p+1}E^\ast \otimes E)
$$
be defined by
$$
r_p^\omega(\alpha_p \otimes \tau) = \left(\frac{1}{p+1} \alpha_p \wedge \iota_{e_i}(\tau) - \frac{(-1)^p}{p(p+1)} \iota_{e_i}(\alpha_p)\wedge \tau\right)\otimes \omega^{-1}(e^i) \in  \Gamma(\pi^\ast(\wedge^{p+1}E^\ast \otimes E)),
$$
for $\alpha \in \Gamma(\wedge^p E^\ast), \tau \in \Gamma(\wedge^2E^\ast)$. Then:
\begin{prop}\label{prop:spebracket}
Using the notation from section \ref{sec:bracketssp}, the binary operation $\llbracket-,-\rrbracket$ on \eqref{eq:resspe} defined by 
\begin{itemize}
    \item [-] The standard $\pi^\ast(\mf {sp}(E,\omega))$-action on itself and  $\pi^\ast(C_i)$ for $i = 3,\dots, n$ when one of the entries lies in $\Gamma(\pi^\ast (\mf {sp}(E,\omega)))$,
    \item [-] $$\llbracket \omega_p,\omega_q\rrbracket = [r_{p-1}^\omega \partial_pP_p(\omega_p),P_q(\omega_q)] + [P_p\omega_p,r_{q-1}\partial_qP_q(\omega_q)],$$
    for $\omega_p\in \Gamma(\pi^\ast(\wedge^p E^\ast \otimes \wedge^2 E^\ast)), \omega_q \in \Gamma(\pi^\ast(\wedge^q E^\ast \otimes \wedge^2 E^\ast))$ where $$P_p: \Gamma(\pi^\ast(\wedge^pE^\ast \otimes \wedge^2 E^\ast))\to \Gamma(\pi^\ast(\ker(r_p^\omega)))$$ is the projection $\text{id}-\phi_{p+1}^\omega\circ r_p^\omega$.
\end{itemize}
equips \eqref{eq:resspe} with a differential graded almost Lie algebroid structure, as in \cite[Definition 3.68]{univlinfty}.
\end{prop}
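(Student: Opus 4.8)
The plan is to deduce the required structure from its vector-space counterpart in Section \ref{sec:linsymp}, patched over a trivializing cover of $E$, while the degree-$0$ part of $\llbracket-,-\rrbracket$ — which now also feels the base directions of $L$ — is handled through the Lie algebroid $\mathfrak{sp}(E,\omega)$. Recall that a differential graded almost Lie algebroid requires a graded-antisymmetric bracket, the Leibniz rule over functions, an anchor that is a bracket morphism, and a differential squaring to zero that is a derivation of the bracket, but \emph{not} the Jacobi identity; so these are exactly the items to verify.

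First I would establish well-definedness together with the purely algebraic identities. As in the proof of Proposition \ref{prop:resgle}, I would fix an open cover of $L$ over which $E$ carries a local symplectic (Darboux) frame, so that $\omega$ becomes the standard constant form; over such a chart all of $\phi^\omega_p$, $\partial_p$, $r^\omega_p$, $P_p$ reduce — with $C^\infty(V)$ replaced by the pullback ring $C^\infty(E)$ — to the maps of Section \ref{sec:linsymp}, and one has the evident analogue of Lemma \ref{lem:rprop}, namely $r^\omega_p\circ\phi^\omega_{p+1}=\mathrm{id}$. From this, $P_p=\mathrm{id}-\phi^\omega_{p+1}\circ r^\omega_p$ is a projector with image $\ker(r^\omega_p)$ satisfying $P_p\circ\phi^\omega_{p+1}=0$; this last identity is what makes the formula for $\llbracket\omega_p,\omega_q\rrbracket$ independent of the chosen representatives modulo $\mathrm{im}(\phi^\omega)$, so that $\llbracket-,-\rrbracket$ descends to $\Gamma(\pi^\ast(C_\bullet))$. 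Graded antisymmetry in negative degrees I expect to read off directly from the defining formula: by the graded antisymmetry of the semidirect-product bracket of Lemma \ref{lem:propspo}iv), the first summand of $\llbracket\omega_p,\omega_q\rrbracket$ is $-(-1)^{(p-1)(q-1)}$ times the second summand of $\llbracket\omega_q,\omega_p\rrbracket$ and vice versa; in the mixed and top degrees it is the antisymmetry of the $\mathfrak{sp}(E,\omega)$-action and of the Lie-algebroid bracket. The Leibniz rule over $C^\infty(E)$ and the compatibility of the anchor (which is $\rho_\omega$ in degree $0$ and zero otherwise) with brackets would follow from the $\mathfrak{gl}(E)$-action of Lemma \ref{lem:cdoact} acting by derivations, from $\mathfrak{X}^\omega_{lin}(E)$ being closed under the Lie bracket of vector fields, and from the Lie algebroid structure of $\mathfrak{sp}(E,\omega)$ supplied by the analogue of Lemma \ref{lem:linvf}.

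The heart of the argument is that $\overline{\partial}_\bullet$ is a derivation of $\llbracket-,-\rrbracket$, and I would split this by the degrees of the two entries. When both entries have degree $0$, $\overline{\partial}_\bullet$ annihilates them and the target degree is zero, so the only content is that $\rho_\omega$ intertwines brackets, which was just noted. When exactly one entry $X\in\Gamma(\pi^\ast\mathfrak{sp}(E,\omega))$ has degree $0$, the Leibniz condition collapses to equivariance of $\overline{\partial}_\bullet$ for the $\mathfrak{sp}(E,\omega)$-action; since $\partial_\bullet$ is contraction with the Euler section $\epsilon\in\Gamma(\pi^\ast E)$, and since $\epsilon$, $\phi^\omega_\bullet$, $r^\omega_\bullet$ and the projectors $P_p$ are all natural with respect to $\mathfrak{X}_{lin}(E)$, this equivariance holds and descends to the cokernels. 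When both entries have negative degree, the identity to check is exactly the one established in the proof of Proposition \ref{prop:spbracket} using Lemma \ref{lem:propspo}iv) and Lemma \ref{lem:rprop}ii); as every operator in sight is a $C^\infty(E)$-linear combination of its fiberwise version, that computation goes through verbatim over the ring $C^\infty(E)$ and a local Darboux frame, the pullbacks $\pi^\ast$ being inert.

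The main obstacle I anticipate is the bookkeeping around the base directions: checking that the $\mathfrak{gl}(E)$-action of Lemma \ref{lem:cdoact} restricts to a well-defined flat $\mathfrak{sp}(E,\omega)$-connection on each $\pi^\ast(C_i)$, and that $\overline{\partial}_\bullet$, $r^\omega_p$ and the projectors $P_p$ are all equivariant for it — equivalently, that every construction in play is natural in the symplectic frame and compatible with $\pi^\ast$. Once this naturality is secured, all remaining identities localize to the vector-space computations of Section \ref{sec:linsymp}. Finally, exactly as in Section \ref{sec:bracketssp}, the Jacobiator of $\llbracket-,-\rrbracket$ will fail to vanish in negative degrees — it is the coboundary of a ternary expression modelled on \eqref{eq:terbrack} — which is why only a differential graded \emph{almost} Lie algebroid, and not a dg-Lie algebroid, is obtained here, in accordance with \cite[Definition 3.68]{univlinfty}.
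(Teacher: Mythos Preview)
Your proposal is correct and in fact considerably more detailed than the paper, which states Proposition~\ref{prop:spebracket} without proof, relying on the preceding sentence ``we follow the same approach as in section~\ref{sec:linsymp}'' together with the one-line proof of Proposition~\ref{prop:spbracket} (``a direct computation using Lemma~\ref{lem:propspo}iv) and Lemma~\ref{lem:rprop}ii)''). Your strategy of localizing over a trivializing cover to reduce to the vector-space computations, combined with handling the base directions through the Lie algebroid $\mathfrak{sp}(E,\omega)$ and the naturality of all operators under $\mathfrak{X}_{lin}(E)$, is exactly the intended argument and matches how the paper treats the analogous passages (e.g.\ Proposition~\ref{prop:resgle}).
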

The same remarks as in section \ref{sec:linsymp} can be made:
\begin{rmk}
\begin{itemize} \item[]
    \item[-] The operation defined in Proposition \ref{prop:spebracket} satisfies the Jacobi identity if at least one entry has degree 0, but not when all of the entries have degree $\leq -1$. The analogue of \eqref{eq:terbrack} defines a ternary operation which serves as a contracting homotopy for the Jacobiator.
    \item[-] When the rank of $E$ is at most $4$, the Jacobi identity is trivially satisfied.
    \item[-] When the rank of $E$ is equal to $6$, the full $L_\infty$-algebroid structure is determined by the differential, the binary bracket as in Proposition \ref{prop:spebracket} and the analogue of \eqref{eq:terbrack}.
\end{itemize}
\end{rmk}
\section{The isotropy $L_\infty$-algebra in a singular point}\label{sec:generalfol}
Let $\mc F$ be a foliation on the vector space $V$. Assume that the origin $p\in V$ be a leaf of $\mc F$. In \cite[Section 4.2]{univlinfty} the authors define an $L_\infty$-algebra with trivial differential associated to a leaf of a foliation. Given a minimal resolution
\begin{equation}\label{diag:resf}
\begin{tikzcd}
0\arrow{r}& \Gamma(E_n) \arrow{r}{\partial_n}& \dots \arrow{r}{\partial_1} & \Gamma(E_0) \arrow{r}{\rho}& \mc F\arrow{r}& 0
\end{tikzcd}
\end{equation}
of $\mc F$ at $p$, and an $L_\infty$-algebroid structure $\{\ell_k,\rho\}_{k\in \mb N}$ on $\Gamma(E_\bullet)$, it is defined by restricting the multibrackets $\ell_k$ to the fibers $(E_i)_p$, which is well-defined because $\rho_p = 0$. This $L_\infty$-algebra is an invariant of $\mc F$, extending the isotropy Lie algebra $\mc F/I_p\mc F$, which is canonically isomorphic to $(E_0)_p$ with the restriction of $\ell_2$ (\cite[Proposition 4.14]{univlinfty}). In particular, the binary bracket $\ell_2$ turns $(E_i)_p$ into a $(E_0)_p$-representation. In this section we show that the spaces $(E_i)_p$ can be recovered directly from $\mc F$, without needing to find a projective resolution of $\mc F$. Moreover, we show that if $\mc F$ is linear, the $(E_0)_p$-representations on the $(E_i)_p$ can be determined explicitly. Note that the $(E_0)_p$-representation on $(E_0)_p$ is just the adjoint representation. This construction builds on \cite[Remark 4.9]{univlinfty} which states the following:
\begin{lem}\label{lem:toriso}
$$
(E_i)_p \cong \text{Tor}^{C^\infty(V)}_i(\mc F,\mb R),
$$
where the $C^\infty(V)$-module structure on $\mb R$ is defined by evaluation in the origin.
\end{lem}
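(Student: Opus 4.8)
The plan is to read off the $\text{Tor}$-groups directly from the minimal resolution \eqref{diag:resf}. First I would recall that over a finite-dimensional (paracompact) manifold every finite-rank vector bundle is a direct summand of a trivial bundle, so each $\Gamma(E_j)$ is a finitely generated projective $C^\infty(V)$-module; this is the fact already used implicitly to phrase \eqref{diag:resf} as a resolution by sections of vector bundles. Deleting the term $\mc F$ from \eqref{diag:resf} therefore yields a finite-length projective resolution of the $C^\infty(V)$-module $\mc F$, and since $\text{Tor}$ does not depend on the choice of projective resolution, $\text{Tor}^{C^\infty(V)}_i(\mc F,\mb R)$ is computed as the $i$-th homology of the complex obtained by applying $-\otimes_{C^\infty(V)}\mb R$ to it, where $\mb R = C^\infty(V)/I_p$ and $I_p$ is the maximal ideal of functions vanishing at $p$.

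The second step is to identify this base-changed complex. For any vector bundle $F\to V$ there is a natural isomorphism
$$
\Gamma(F)\otimes_{C^\infty(V)}\mb R \;\cong\; \Gamma(F)/I_p\Gamma(F)\;\cong\; F_p,
$$
the fiber at $p$, and a vector bundle morphism $\psi\colon F\to F'$ induces on base change precisely the fiberwise linear map $\psi_p\colon F_p\to F'_p$. Applying this to each $\Gamma(E_j)$ and to the differentials $\partial_j$, the complex computing $\text{Tor}^{C^\infty(V)}_\bullet(\mc F,\mb R)$ becomes the complex of finite-dimensional vector spaces
$$
0\longrightarrow (E_n)_p \longrightarrow \cdots \longrightarrow (E_0)_p \longrightarrow 0
$$
whose differential in degree $j$ is $(\partial_j)_p$.

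Finally I would invoke minimality at $p$: by hypothesis every differential $\partial_j$ in \eqref{diag:resf}, being a vector bundle morphism, vanishes at $p$, so $(\partial_j)_p = 0$ for all $j$. Hence the complex above has zero differential, its $i$-th homology is $(E_i)_p$, and we conclude $\text{Tor}^{C^\infty(V)}_i(\mc F,\mb R)\cong (E_i)_p$. The argument is routine homological algebra once the setup is in place; the only points deserving care are the projectivity of $\Gamma(E_j)$ over $C^\infty(V)$ (which is what legitimizes using \eqref{diag:resf} to compute $\text{Tor}$) and the compatibility of $-\otimes_{C^\infty(V)}\mb R$ with passing to fibers of bundles and of bundle morphisms. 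I do not expect a genuine obstacle: minimality is exactly what forces all the differentials in the $\text{Tor}$-complex to vanish, making the identification immediate.
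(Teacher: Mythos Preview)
Your proof is correct and follows essentially the same approach as the paper's: compute $\mathrm{Tor}$ from the projective resolution \eqref{diag:resf}, identify $\Gamma(E_j)\otimes_{C^\infty(V)}\mathbb R$ with the fiber $(E_j)_p$, and use minimality at $p$ to see that all induced differentials vanish. Your write-up is in fact a bit more explicit than the paper's about why $\Gamma(E_j)$ is projective and why minimality forces the differentials to zero, but the argument is the same.
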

\begin{proof}
One way to construct $\text{Tor}^{C^\infty(V)}_i(\mc F,\mb R)$ is to take a projective resolution
\[
\begin{tikzcd}
0\arrow{r}& \Gamma(E_n) \arrow{r}{\partial_n}& \dots \arrow{r}{\partial_1} & \Gamma(E_0) \arrow{r}{\rho}& \mc F\arrow{r}& 0
\end{tikzcd}
\]
of $\mc F$, then take the tensor product with $\mb R$ over $C^\infty(V)$ to obtain
\[
\begin{tikzcd}
0\arrow{r}& \Gamma(E_n)\otimes_{C^\infty(V)} \mb R \arrow{r}{\partial_n \otimes \text{id}}& \dots \arrow{r}{\partial_1\otimes\text{id}} & \Gamma(E_0)\otimes_{C^\infty(V)}\mb R \arrow{r}&  0
\end{tikzcd} 
\]
and compute the cohomology. As $\Gamma(E_i)\otimes_{C^\infty(V)} \mb R \cong (E_i)_p$ and the differentials become trivial, the result follows.
\end{proof}
It is however a well-known fact (see \cite[Theorem 2.7.2]{weibelhomalg} for instance) that instead of first taking a projective resolution of $\mc F$ and then taking the tensor product with $\mb R$, we can equivalently first take a projective resolution of $\mb R$, and then take the tensor product with $\mc F$ and compute cohomology. A major advantage here is that we know an explicit resolution of $\mb R$: it is given by the complex \eqref{eq:koszexact2}. We therefore obtain:
\begin{prop}\label{prop:toriso} For $i = 0,\dots, n$, we have
\begin{equation}\label{eq:toriso}
(E_i)_p \cong H^i(\Gamma(\wedge^\bullet V^\ast)\otimes_{C^\infty(V)}\mc F(V),d_\bullet \otimes \text{id})
\end{equation}
where $\wedge^{-1}(V^\ast)$ is understood to be 0, and $d =\iota_{x^i\partial_{x^i}}$.
\end{prop}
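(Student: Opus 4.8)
The plan is to deduce the statement directly from Lemma~\ref{lem:toriso} together with the symmetry (balancing) of $\mathrm{Tor}$. By Lemma~\ref{lem:toriso} we already know $(E_i)_p \cong \mathrm{Tor}_i^{C^\infty(V)}(\mc F(V),\mb R)$, where $\mb R$ is a $C^\infty(V)$-module via evaluation at the origin $p=0$. Now $\mathrm{Tor}$ can be computed by resolving \emph{either} variable: by the balancing theorem \cite[Theorem 2.7.2]{weibelhomalg}, for any projective (equivalently flat) resolution $Q_\bullet\to\mb R$ of $\mb R$ over $C^\infty(V)$ one has $\mathrm{Tor}_i^{C^\infty(V)}(\mc F(V),\mb R)\cong H_i\bigl(\mc F(V)\otimes_{C^\infty(V)}Q_\bullet\bigr)$. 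The only input still needed is an explicit such resolution, and this is exactly what Lemma~\ref{lem:koszulexact} provides: the complex \eqref{eq:koszexact2}, with augmentation $\mathrm{ev}_q:C^\infty(V)\to\mb R$ and differentials $d_k=\iota_{x^i\partial_{x^i}}$, is a resolution of $\mb R$ whose every term $\Gamma(\wedge^k V^\ast)$ is a finitely generated free $C^\infty(V)$-module (sections of a trivial bundle).

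Granting this, the proof is essentially bookkeeping: drop the augmentation $\mathrm{ev}_q$ from \eqref{eq:koszexact2} to obtain the free resolution $Q_\bullet$ of $\mb R$ with $Q_i=\Gamma(\wedge^i V^\ast)$ placed in degree $i$; tensor over $C^\infty(V)$ with $\mc F(V)$, so that the $i$-th term becomes $\Gamma(\wedge^i V^\ast)\otimes_{C^\infty(V)}\mc F(V)$ and the differential becomes $d_\bullet\otimes\mathrm{id}$; then take homology. The resulting groups are $\mathrm{Tor}_i^{C^\infty(V)}(\mc F(V),\mb R)$ by the previous paragraph, and they are by construction the cohomology groups appearing on the right-hand side of \eqref{eq:toriso}, with the convention $\wedge^{-1}V^\ast=0$ accounting for the degree-$0$ spot (where $\Gamma(\wedge^0 V^\ast)\otimes_{C^\infty(V)}\mc F(V)=\mc F(V)$ and $H^0=\mc F(V)/I_p\mc F(V)$, matching $\mathrm{Tor}_0(\mc F(V),\mb R)=\mc F(V)\otimes_{C^\infty(V)}\mb R$). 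Composing with the isomorphism of Lemma~\ref{lem:toriso} finishes the argument.

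I do not expect a genuine obstacle here: the substance of the proof is contained in the two lemmas already proved, Lemma~\ref{lem:koszulexact} supplying the free resolution of $\mb R$ and Lemma~\ref{lem:toriso} identifying $(E_i)_p$ with $\mathrm{Tor}$, and what remains is the standard fact that $\mathrm{Tor}$ is balanced. The only points demanding (routine) care are verifying that \eqref{eq:koszexact2} genuinely qualifies as a projective resolution of $\mb R$ (it does, being exact by Lemma~\ref{lem:koszulexact} and termwise free) and keeping the homological degree conventions consistent so that $\Gamma(\wedge^i V^\ast)$ ends up in degree $i$; one should also recall that it is the minimality of \eqref{diag:resf} that makes $(E_i)_p$ literally the $i$-th homology rather than a mere subquotient, but this is already absorbed into Lemma~\ref{lem:toriso}.
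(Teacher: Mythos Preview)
Your proposal is correct and follows essentially the same approach as the paper: both invoke Lemma~\ref{lem:toriso} to identify $(E_i)_p$ with $\mathrm{Tor}_i^{C^\infty(V)}(\mc F,\mb R)$, then appeal to the balancing of $\mathrm{Tor}$ (the paper cites the same reference, \cite[Theorem 2.7.2]{weibelhomalg}) to compute it via the Koszul resolution \eqref{eq:koszexact2} of $\mb R$ instead of a resolution of $\mc F$. The paper's argument is in fact given in the paragraph preceding the proposition rather than as a formal proof, and your write-up makes the bookkeeping slightly more explicit, but the content is the same.
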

Next, we consider the action $\sigma_i$ of the isotropy Lie algebra $\mc F/I_p\mc F \cong (E_0)_p$ on $(E_i)_p$ by the binary bracket $\ell_2$. It turns out that when $\mc F$ is a \emph{linear} foliation, we can define a canonical action on the right hand side of \eqref{eq:toriso} which under the isomorphism of Proposition \ref{prop:toriso} corresponds to $\sigma_i$.
\begin{prop}\label{prop:repisoliealg}
Let $\mc F$ be a linear foliation on $V$. Then:
\begin{itemize}
    \item[i)] The map $$ \text{lin}: \mc F\to \mc F$$ given by
    $$
    X\mapsto X^{(1)} \in \mc F
    $$
    descends to an injective Lie algebra homomorphism 
    $$
    \overline{\text{lin}}:\mc F/I_p\mc F \to \mc F.
    $$
    Here $X^{(1)}$ denotes the linear part of the vector field $X$.
    \item[ii)] Let $i = 0,\dots, n$. For $X \in \mc F(V)$, $\alpha \otimes Y \in \Gamma(\wedge^iV^\ast) \otimes_{C^\infty(V)} \mc F(V)$, the assignment
    \begin{equation}\label{eq:fnaction}
    (X+I_p\mc F)\cdot (\alpha \otimes \mc F) := [X^{(1)},\alpha \otimes Y]_{FN} = \mc L_{X^{(1)}}(\alpha) \otimes Y + \alpha \otimes [X^{(1)},Y]
    \end{equation}
    defines a representation of $\mc F/I_p\mc F$ on $\Gamma(\wedge^i V^\ast)\otimes_{C^\infty(V)} \mc F(V)$, compatible with the differential $d_\bullet$, where $[-,-]_{FN}$ is the Fr\"olicher-Nijenhuis bracket. Consequently, there is a well-defined action on the cohomology groups.
    \item[iii)] The $\mc F/I_p\mc F$-action on $H^i(\Gamma(\wedge^\bullet V^\ast)\otimes_{C^\infty(V)} \mc F(V),d_\bullet\otimes \text{id})$ induced by the $\mc F/I_p\mc F$-action \eqref{eq:fnaction} is equivalent to the $\mc F/I_p\mc F$-action on $(E_i)_p$.
\end{itemize}
\end{prop}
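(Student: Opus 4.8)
The strategy is to dispatch i) and ii) by direct manipulation of linear parts of vector fields, and to prove iii) by lifting the action to the double complex that exhibits the balancing of $\mathrm{Tor}$ underlying Proposition \ref{prop:toriso}.

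For i), recall that $\mc F$ being linear means it is generated over $C^\infty(V)$ by the (finite-dimensional) Lie algebra $\g\subset\mf{gl}(V)$ of linear vector fields it contains, so every $X\in\mc F(V)$ vanishes at $p$ and has vanishing $0$-jet there. Writing $X=\sum_a f_aX_a$ with $X_a\in\g$, one gets $X^{(1)}=\sum_a f_a(p)X_a\in\g\subset\mc F(V)$, so $\mathrm{lin}$ maps $\mc F(V)$ to itself. If $g\in I_p$ and $Y\in\mc F(V)$, then $gY$ has vanishing $1$-jet, so $\mathrm{lin}(I_p\mc F)=0$ and $\mathrm{lin}$ descends to $\overline{\mathrm{lin}}$. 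It is a Lie algebra morphism because $[X,Y]^{(1)}=[X^{(1)},Y^{(1)}]$ (the only contribution to the degree-$1$ part of $[X,Y]$ comes from the brackets of the degree-$1$ parts, as $X,Y$ have no constant part); combined with the fact that each element of $\g$ is its own linear part, $\overline{\mathrm{lin}}$ is in fact an isomorphism onto $\g$. Injectivity: if $X^{(1)}=\sum_a f_a(p)X_a=0$ then $X=\sum_a (f_a-f_a(p))X_a\in I_p\mc F$. For ii), one checks that the assignment descends to $\otimes_{C^\infty(V)}$: by the Leibniz rule for $\mc L_{X^{(1)}}$ and for $[X^{(1)},-]$, both $(X+I_p\mc F)\cdot(f\alpha\otimes Y)$ and $(X+I_p\mc F)\cdot(\alpha\otimes fY)$ equal $X^{(1)}(f)\,\alpha\otimes Y+f\cdot\big((X+I_p\mc F)\cdot(\alpha\otimes Y)\big)$; since the formula involves $X$ only through $X^{(1)}$, it is well defined on $\mc F/I_p\mc F$ by i). It is a Lie algebra action because $Z\mapsto[Z,-]_{FN}=\mc L_Z\otimes\mathrm{id}+\mathrm{id}\otimes[Z,-]$ is a Lie algebra morphism; and it commutes with $d_\bullet=\iota_{\mc E}$, $\mc E=x^i\partial_{x^i}$, because $[\mc L_{X^{(1)}},\iota_{\mc E}]=\iota_{[X^{(1)},\mc E]}=0$ since linear vector fields commute with the Euler field. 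Hence it descends to the cohomology.

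For iii), fix a minimal geometric resolution $(\Gamma(E_\bullet),\partial_\bullet)$ of $\mc F$ as in \eqref{diag:resf}, equipped with an $L_\infty$-algebroid structure $\{\ell_k,\rho\}$, and let $P_\bullet=\big(\Gamma(\wedge^\bullet V^\ast),\iota_{\mc E}\big)$ with $P_0=C^\infty(V)$ be the Koszul resolution of $\mb R$ from \eqref{eq:koszexact2}. Consider the bounded double complex
\[
D_{i,j}=\Gamma(E_i)\otimes_{C^\infty(V)}\Gamma(\wedge^j V^\ast),
\]
with horizontal differential $\partial_\bullet\otimes\mathrm{id}$ and vertical differential $\pm\,\mathrm{id}\otimes\iota_{\mc E}$. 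Since each $\Gamma(E_i)$ and each $\Gamma(\wedge^jV^\ast)$ is flat over $C^\infty(V)$, the two spectral sequences of $D$ degenerate on the second page: computing vertical cohomology first yields $\big((E_\bullet)_p,0\big)$ (the differential vanishes at $p$ by minimality), while computing horizontal cohomology first yields $\big(\Gamma(\wedge^\bullet V^\ast)\otimes_{C^\infty(V)}\mc F,\iota_{\mc E}\otimes\mathrm{id}\big)$; this is exactly the balancing argument, and the resulting identification is the isomorphism of Proposition \ref{prop:toriso}. Now I lift the action: for $Z\in\g$ pick $\widehat Z\in\Gamma(E_0)$ with $\rho(\widehat Z)=Z$, and set $Z\cdot(s\otimes\alpha):=\ell_2(\widehat Z,s)\otimes\alpha+s\otimes\mc L_Z\alpha$ on $D$. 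This is well defined over $C^\infty(V)$ precisely because $\ell_2(\widehat Z,fs)=f\,\ell_2(\widehat Z,s)+\rho(\widehat Z)(f)\,s=f\,\ell_2(\widehat Z,s)+Z(f)\,s$ matches the $\mc L_Z\alpha$ term. The operator $\ell_2(\widehat Z,-)$ on $\Gamma(E_\bullet)$ is a chain map lifting $[Z,-]$ on $\mc F$, by the quadratic $L_\infty$-identity ($\ell_1$ is a derivation of $\ell_2$) together with $\rho(\ell_2(\widehat Z,s))=[\rho(\widehat Z),\rho(s)]$; combined with $[\mc L_Z,\iota_{\mc E}]=0$ this shows $Z\cdot$ commutes with both differentials of $D$ and with the two augmentations (onto $\mc F\otimes\Gamma(\wedge^\bullet V^\ast)$ and onto $(E_\bullet)_p$). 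Consequently $Z$ acts on the spectral sequences, and on the two degenerate pages it induces, respectively, the action of ii) on $\Gamma(\wedge^\bullet V^\ast)\otimes_{C^\infty(V)}\mc F$ and the operator $\ell_2(\widehat Z,-)\big|_p=\sigma_i(Z)$ on $(E_i)_p$ — the latter being the isotropy representation of \cite[Section 4.2]{univlinfty}, independent of the lift $\widehat Z$ since $(\partial_\bullet)_p=0$ and $\rho_p=0$. As the isomorphism of Proposition \ref{prop:toriso} intertwines these two actions, iii) follows.

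The delicate point is this last identification: one must check that $\ell_2(\widehat Z,-)$ is a genuine chain map on all of $\Gamma(E_\bullet)$ — this uses only the quadratic structure identities and the anchor compatibility, so it holds for an arbitrary $L_\infty$-algebroid and not just a dg one — and that its fibrewise restriction at $p$ coincides with $\sigma_i$ and is insensitive to the auxiliary choice of $\widehat Z$, which is exactly where minimality ($(\partial_\bullet)_p=0$) and the vanishing of the anchor at the singular point ($\rho_p=0$) enter. Once these compatibilities are in place, the coincidence of the two representations is forced by the naturality of the spectral sequences of $D$.
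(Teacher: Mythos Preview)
Your argument is correct and follows the same overall route as the paper: parts i) and ii) are handled by direct computation with linear parts and the Cartan identity $[\mc L_{X^{(1)}},\iota_{\mc E}]=\iota_{[X^{(1)},\mc E]}=0$, and part iii) via the double complex $\Gamma(\wedge^\bullet V^\ast)\otimes_{C^\infty(V)}\Gamma(E_\bullet)$ underlying the balancing of $\mathrm{Tor}$.

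The one noteworthy difference is in iii). The paper simply asserts that the augmentations $\mathrm{id}\otimes\rho$ and $\mathrm{ev}_p\otimes\mathrm{id}$ are ``compatible with the $\mc F/I_p\mc F$-actions'' without ever specifying what the action on the double complex is. You actually construct it: you lift $Z\in\g$ to $\widehat Z\in\Gamma(E_0)$ and set $Z\cdot(s\otimes\alpha)=\ell_2(\widehat Z,s)\otimes\alpha+s\otimes\mc L_Z\alpha$, then verify it commutes with both differentials (using that $\ell_1$ is a derivation of $\ell_2$ and that $\ell_1(\widehat Z)=0$ for degree reasons) and with both augmentations. This makes the equivariance claim genuinely checkable rather than declarative, and your remark that independence of the lift $\widehat Z$ at $p$ uses exactly $\rho_p=0$ and minimality $(\partial_\bullet)_p=0$ pinpoints where the hypotheses enter. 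Your spectral-sequence phrasing and the paper's augmentation-map phrasing are of course equivalent. As a minor bonus, your injectivity argument in i) --- subtracting $X^{(1)}=\sum_a f_a(p)X_a$ from $X$ directly --- avoids the paper's appeal to linear independence of the generators.
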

\begin{proof}
\begin{itemize}
\item[]
    \item [i)] The fact that $\overline{lin}$ is a well-defined Lie algebra homomorphism was shown in \cite[Section 4]{holtraf}. For the injectivity, we need to show that if a vector field $Y \in \mc F(V)$ vanishes quadratically, it can be written as a linear combination 
    $$
    Y  = \sum_{i = 1}^r f^iX_i,
    $$
    where $X_i\in \mc F(V)$, and $f^i(p) =0$ for $i = 1,\dots, r$. As $\mc F$ is a linear foliation, we can take the $X_i$ to be linear vector fields which are linearly independent over $\mb R$. Then
    $$
    0 = Y^{(1)} = \sum_{i =1}^r f^i(0)X_i, 
    $$
    which implies that $f^i(0) = 0$.
    \item[ii)] This follows directly from the fact that $\overline{lin}$ is a Lie algebra homomorphism, and that the Fr\"olicher-Nijenhuis bracket satisfies the Jacobi identity. The compatibility with the differentials follows from the fact that 
    $$
    [\mc L_{X^{(1)}},\iota_{x^i\partial_{x^i}}] = \iota_{[X^{(1)},x^i\partial_{x^i}]} = 0,
    $$
    as $X^{(1)}$ is linear.
    \item[iii)] For this, we recall the isomorphism \eqref{eq:toriso} as described in \cite{weibelhomalg}. Given the projective resolutions \eqref{eq:koszexact2} and \eqref{diag:resf}, we can take the tensor product to obtain a double complex $$(\Gamma(\wedge^\bullet V^\ast) \otimes_{C^\infty(V)} \Gamma(E_{\bullet}),d_\bullet \otimes \text{id}, \text{id} \otimes \partial_\bullet).$$ From the double complex, we can construct the total complex
    $$
    (\text{Tot}(\Gamma(\wedge ^\bullet V^\ast) \otimes_{C^\infty(V)} \Gamma(E_\bullet), d_\bullet\otimes \text{id} + \text{id}\otimes \partial_\bullet).
    $$
    Then the maps 
    $$
    \text{id} \otimes \rho: \text{Tot}(\Gamma(\wedge^\bullet V^\ast)\otimes_{C^\infty(V)} \Gamma(E_\bullet)) \to \Gamma(\wedge^\bullet V^\ast) \otimes_{C^\infty(V)} \mc F(V)
    $$
    and 
    $$
    \text{ev}_p\otimes \text{id}: \text{Tot}(\Gamma(\wedge^\bullet V^\ast)\otimes_{C^\infty(V)} \Gamma(E_\bullet)) \to \mb R \otimes_{C^\infty(V)} \Gamma(E_\bullet)
    $$
    induce isomorphisms in cohomology. As both maps are compatible with the $\mc F/I_p\mc F$-actions, the isomorphisms in cohomology respect the $\mc F/I_p\mc F$-action as well.
\end{itemize}
\end{proof}
The proposition now allows us to compute invariants of the foliation $\mc F(V)$ without needing an explicit resolution of $\mc F$, as we do in the following example.
\begin{exmp}
Consider on $V = \mb R^2$ the foliation $\mc F_1(V) = \langle x\partial_x, y\partial_x\rangle_{C^\infty(V)}$. Then by Lemma \ref{lem:toriso},
$$
(E_0)_p := \text{Tor}_0^{C^\infty(V)}(\mc F(V),\mb R) = \mc F_1/I_p\mc F_1.
$$
For $\text{Tor}_1^{C^\infty(V)}(\mc F(V),\mb R)$, a straightforward computation shows that the  middle cohomology of
\begin{equation}\label{eq:tor1}
\begin{tikzcd}
\Gamma(\wedge ^2 V^\ast) \otimes_{C^\infty(V)} \mc F(V) \arrow{r}{d_2} & \Gamma(V^\ast)\otimes_{C^\infty(V)} \mc F(V) \arrow{r}{d_1} &\mc F(V)
\end{tikzcd}
\end{equation}
is one-dimensional, generated by the class of $\gamma:=dx \otimes y\partial_x - dy \otimes x\partial_x$. Observe that this element is not exact in \eqref{eq:tor1}: although it can be written as 
$$
dx \otimes y\partial_x - dy \otimes x\partial_x = d_2(dx\wedge dy \otimes \partial_x)
$$
in \eqref{eq:resgln}, $\partial_x \not\in \mc F(V)$. Moreover, any exact element in \eqref{eq:tor1} must vanish at least quadratically in the origin, which is not the case for $\gamma$.\\
Finally, it is easy to see that $d_2$ is injective, so we now know that for any minimal resolution \eqref{diag:resf},
the space $(E_0)_p$ is two-dimensional, the space $(E_1)_p$ is one-dimensional, and the spaces $(E_i)_p$ for $i\geq 2$ are trivial. The Lie algebra structure on $(E_0)_p$ is the non-abelian two-dimensional Lie algebra, while the action of $(E_0)_p$ on $(E_1)_p$ is trivial.
\end{exmp}
\begin{exmp}
We can modify the previous example to obtain a foliation which is not linear: consider $\mc F_2(V) = \langle (x+xy)\partial_x+ y^2\partial_y,y\partial_x\rangle_{C^\infty(V)}$. It is not difficult to see that $\mc F_2(V)$ is a projective $C^\infty(V)$-module. Consequently, for any minimal resolution \eqref{diag:resf}, $(E_0)_p$ is two-dimensional, and $(E_i)_p = 0$ for $i \geq 1$. Although it was already known that there exists no analytic diffeomorphism of $V$ taking the generators of $\mc F_2(V)$ to the generators of $\mc F_1(V)$ of the previous example (see \cite[Proposition 1.2]{Guillemin1968RemarksOA}), the above argument shows that there does not even exist a smooth diffeomorphism of $V$ taking the $C^\infty(V)$-\emph{module} $\mc F_2(V)$ to $\mc F_1(V)$, showing that not even the germs of the foliations $\mc F_1$ and $\mc F_2$ are equivalent, even though the modules generated by the first order approximations of the generators around $p\in V$ are equal. Of course, in this case the difference between $\mc F_1(V)$ and $\mc F_2(V)$ can be seen by considering the dimension of the regular leaves: for $\mc F_1$ they are 1-dimensional, while for $\mc F_2$ they are 2-dimensional.
\end{exmp}

\appendix
\section*{Appendix}
\section{Compatibility of $r^\omega$ with the differentials}\label{appendix1}
In this section we use the notation from section \ref{sec:linsymp}, and investigate whether the left inverse $r^\omega$ of $\phi^\omega$ can be chosen to be a cochain map in some degrees, which would simplify the brackets of the $L_\infty$-algebroid structure.\\

As the choice of $r^\omega$ in \eqref{eq:spbracket2} is not unique, we investigate whether the left inverse $r^\omega$ can be chosen to be compatible with the differentials, as this would force $\llbracket-,-\rrbracket$ to be equal to $\{-,-\}$.\\
However, it is clear that this is not possible in all degrees: first of all, as $r_1^\omega$ is not only a left inverse, but the unique inverse, as $\phi_2^\omega$ is an isomorphism. Hence, there is no choice there. Then, the existence of $\widetilde{r_2^\omega}:\Gamma(\wedge^2V^\ast \otimes \wedge^2 V^\ast) \to \Gamma(\wedge^3 V^\ast \otimes V)$ such that
$$
r_1^\omega\partial_2 + d_3\widetilde{r_2^\omega} = 0
$$
implies that $d_2r_1^\omega \partial_2 = 0$, which is not the case. \\ Nevertheless, we consider the other degrees, as compatibility with the differentials would simplify the binary and ternary brackets.\\
We start with the lowest degree: Let $n = \dim V$. In degrees $-n$ and $-n+1$, we get the following square
\[
\begin{tikzcd}
0\arrow{r}\arrow{d} & \Gamma(\wedge^n V^\ast \otimes V)\arrow{d}{\phi_{n}^\omega} \\
\Gamma(\wedge^n V^\ast \otimes \wedge^2V^\ast) \arrow{r}{\partial_n}& \Gamma(\wedge^{n-1}V^\ast \otimes \wedge^2 V^\ast)
\end{tikzcd}.
\]
Given a left inverse $\widetilde{r_{n-1}^\omega}:\Gamma(\wedge^{n-1} V^\ast \otimes \wedge^2 V^\ast) \to \Gamma(\wedge^n V^\ast \otimes V)$ of $\phi^\omega_n$ such that $$\widetilde{r_{n-1}^\omega}\partial_n = 0,$$ we note that the constant extension of the value at the origin $\widetilde{r_{n-1}^\omega}(0)$ is also a left inverse of $\phi_n^\omega$ which satisfies $$
\widetilde{r_{n-1}^\omega}(0)\partial_n = 0.
$$
It therefore suffices to show that there exists no constant (in $V$) left inverse $\widetilde{r_{n-1}^\omega}$ of $\phi_n^\omega$ such that 
$$
\widetilde{r_{n-1}^\omega}\partial_n = 0.
$$
Let $\mu\in \wedge^n V^\ast, \tau \in \wedge^2 V^\ast$. Then we can view $\partial_n$ as an injective $\mb R$-linear map
$$
\partial_n: \wedge^n V^\ast \otimes \wedge^2 V^\ast \to V^\ast \otimes \wedge^{n-1}V^\ast \otimes \wedge^2 V^\ast,
$$
as $\partial_n$ has linear coefficient functions, and $\widetilde{r_{n-1}^\omega}$ extends by $\text{id}_{V^\ast}\otimes \widetilde{r_{n-1}^\omega}$ to a map
$$
\text{id}_{V^\ast}\otimes \widetilde{r_{n-1}^\omega}:V^\ast \otimes \wedge^{n-1}V^\ast \otimes \wedge^2 V^\ast \to V^\ast \otimes \wedge^n V^\ast \otimes V
$$
by $C^\infty(V)$-linearity.\\
Now 
$$
\text{id}_{V^\ast}\otimes \widetilde{r_{n-1}^\omega}\partial_n(\mu\otimes \tau) = 0
$$
implies that 
$$
e^i \otimes \iota_{e_i}(\mu) \otimes \tau \in \ker(\text{id}_{V^\ast}\otimes \widetilde{r_{n-1}^\omega}).
$$
However, as $\ker(\text{id}_{V^\ast}\otimes \widetilde{r_{n-1}^\omega}) = V^\ast \otimes \ker(\widetilde{r_{n-1}^\omega})$, it follows that 
$$
\iota_{e_i}(\mu) \otimes \tau \in \ker( \widetilde{r_{n-1}^\omega})
$$
for each $i = 1,\dots, n$.\\
These elements actually generate the entirety of $\wedge^{n-1} V^\ast \otimes \wedge^2 V^\ast$, forcing $ \widetilde{r_{n-1}^\omega} = 0$, contradicting the assumption that $\widetilde{r_{n-1}^\omega}\phi_n^\omega = \text{id}$.\\
Now fix $\dim V = 4$. The general case discussed above show that there exist no left inverse $\widetilde{r_3^{\omega}}$ of $\phi_4^\omega$ such that 
$$
d_4\widetilde{r_3^\omega}\partial_4= 0.
$$
We will show that there exists no \emph{$\mf{sp}(V,\omega)$-equivariant} left inverse $\widetilde{r_2^\omega}$ of $\phi_3^\omega$ satisfying
$$
d_3 \widetilde{r_2^\omega}\partial_3 = 0.
$$
The requirement that $\widetilde{r_2^\omega}$ is $\mf{sp}(V,\omega)$-equivariant is natural, as $\phi^\omega_3$ is. We follow \cite[Chapter 16]{harris1991representation} to determine the space of all $
\mf {sp}(V,\omega)
$-equivariant maps $\wedge^2V^\ast \otimes \wedge^2 V^\ast \to \wedge^3 V^\ast \otimes V$, and then restrict to those which are left inverses of $\phi^\omega_3$. For this, we decompose the respective spaces into irreducible $\mf{sp}(V,\omega)$-representations:
\begin{lem}
\begin{align*}
    R_1:=\wedge^3 V^\ast \otimes V &\cong \mb R \oplus W \oplus S^2(V)\\
    R_2:=\wedge^2 V^\ast \otimes \wedge^2 V^\ast & \cong \mb R^{\oplus 2} \oplus W^{\oplus 2} \oplus S^2(V) \oplus C,
\end{align*}
where $W = \text{Ann}(\mb R\omega)\subset \wedge^2 V$, and $C$ is an irreducible representation not isomorphic to $\mb R$, $W$ or $S^2(V)$.
\end{lem}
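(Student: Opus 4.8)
The plan is to use non-degeneracy of $\omega$ to replace every exterior power of $V^\ast$ by the corresponding exterior power of $V$, and then to recognize each space in the statement as a tensor construction built from the standard representation $V$ and the $5$-dimensional fundamental representation $W$ of $\mathfrak{sp}(V,\omega)$, whose tensor products are classical. Concretely I would exploit the low-rank coincidence $\mathfrak{sp}(V,\omega)\cong\mathfrak{so}(W)$ and the computations of tensor products of the vector representation of $\mathfrak{so}_5$ from \cite[Chapter 16]{harris1991representation}. All the representations that appear ($V$, $W$, $S^2(V)$, and the $14$-dimensional piece $C$) are defined over $\mathbb{R}$ and stay irreducible after complexification, so it suffices to check the isomorphisms over $\mathbb{C}$ and match dimensions.

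First I would collect the elementary facts. Contraction with $\omega$ gives an $\mathfrak{sp}(V,\omega)$-equivariant isomorphism $V\cong V^\ast$, hence $\wedge^k V^\ast\cong\wedge^k V$ for all $k$; since $\wedge^4 V$ is the trivial representation, the wedge pairing $\wedge^3 V\otimes V\to\wedge^4 V\cong\mathbb{R}$ identifies $\wedge^3 V^\ast\cong V$. Moreover $\wedge^2 V$ splits $\mathfrak{sp}$-invariantly as $\mathbb{R}\,\omega^{-1}\oplus W$ with $W=\text{Ann}(\mathbb{R}\omega)$ the fundamental $5$-dimensional representation, so $\wedge^2 V^\ast\cong\mathbb{R}\oplus W$ as well; and $S^2(V)$ is irreducible of dimension $10$, being the adjoint representation $\mathfrak{sp}(V,\omega)$ transported along $\omega$. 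For $R_1$ these already give $R_1=\wedge^3 V^\ast\otimes V\cong V\otimes V\cong S^2(V)\oplus\wedge^2 V\cong S^2(V)\oplus\mathbb{R}\oplus W$, and the count $\dim R_1=16=10+1+5$ shows nothing is missing.

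For $R_2$ I would substitute $\wedge^2 V^\ast\cong\mathbb{R}\oplus W$ twice to get $R_2\cong\mathbb{R}\oplus W^{\oplus 2}\oplus(W\otimes W)$, and then decompose $W\otimes W$ by treating $W$ as the standard representation of $\mathfrak{so}(W)$: one has $W\otimes W\cong S^2 W\oplus\wedge^2 W$ with $\wedge^2 W\cong\mathfrak{so}(W)\cong S^2(V)$ of dimension $10$, and $S^2 W\cong\mathbb{R}\oplus C$ where $C$ is the kernel of the contraction $S^2 W\to\mathbb{R}$ by the invariant form, an irreducible representation of dimension $14$. Collecting terms gives $R_2\cong\mathbb{R}^{\oplus 2}\oplus W^{\oplus 2}\oplus S^2(V)\oplus C$, with $\dim R_2=36=2+10+10+14$; and since $\dim C=14\notin\{1,5,10\}$, the piece $C$ is isomorphic to none of $\mathbb{R}$, $W$, $S^2(V)$. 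The only step with real content is the decomposition of $W\otimes W$ together with the irreducibility of $C$ and the identification $\wedge^2 W\cong S^2(V)$: I expect this to be the main point, and I would handle it either by quoting the standard analysis of $\wedge^2$ and $S^2$ of the vector representation of $\mathfrak{so}_5$ in \cite{harris1991representation}, or, for a self-contained treatment, by a short weight-multiplicity computation in the root system $C_2$. Everything else is a formal consequence of non-degeneracy of $\omega$ and was already used implicitly in Section \ref{sec:linsymp}.
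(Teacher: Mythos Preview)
Your proposal is correct and matches the paper's approach: the paper does not supply a proof for this lemma but simply refers to \cite[Chapter 16]{harris1991representation}, and your sketch is precisely the computation that reference carries out, including the identification $\wedge^3 V^\ast\otimes V\cong V\otimes V$ via the volume form $\tfrac12\omega\wedge\omega$, which the paper uses explicitly in the subsequent lemma. The one extra ingredient you add beyond the bare citation is the use of the accidental isomorphism $\mathfrak{sp}(V,\omega)\cong\mathfrak{so}(W)$ to decompose $W\otimes W$; this is a clean way to identify $C$ as the traceless part of $S^2W$ and to see at once that $\dim C=14$, but it is not strictly necessary---a direct weight computation in $C_2$, as you also indicate, gives the same answer.
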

Now we would like to apply a variation of Schur's lemma (see for instance \cite{Humphreys1973-ub}) to compute the space of $\mf {sp}(V,\omega)$-equivariant maps $\wedge^2 V^\ast \otimes \wedge^2 V^\ast \to \wedge^3 V^\ast \otimes V $. We first obtain:
\begin{lem}
\begin{align*}\Hom_{\mf{sp}(V,\omega)}(R_2,R_1) &\cong \End_{\mf {sp}(V,\omega)}(\mb R)^{\oplus 2} \oplus \End_{\mf {sp}(V,\omega)}(W)^{\oplus 2} \oplus \End_{\mf {sp}(V,\omega)}(S^2(V))\\
&\cong \mb R^2 \oplus \mb R^2 \oplus \mb R.\end{align*}
\end{lem}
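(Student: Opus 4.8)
The plan is to read off $\Hom_{\mf{sp}(V,\omega)}(R_2,R_1)$ from the irreducible decompositions provided by the previous lemma, using additivity of $\Hom$ together with Schur's lemma. Since $\Hom_{\mf{sp}(V,\omega)}(-,-)$ is additive in each variable, writing $R_1 \cong \R \oplus W \oplus S^2(V)$ and $R_2 \cong \R^{\oplus 2} \oplus W^{\oplus 2} \oplus S^2(V) \oplus C$ yields
\[
\Hom_{\mf{sp}(V,\omega)}(R_2,R_1) \cong \bigoplus_{A,B} \Hom_{\mf{sp}(V,\omega)}(A,B),
\]
the sum running over the irreducible summands $A$ of $R_2$ and $B$ of $R_1$ with their multiplicities. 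By Schur's lemma each term with $A \not\cong B$ vanishes; in particular the copy of $C$ in $R_2$ contributes nothing, as $C$ does not occur in $R_1$, and no cross-terms between $\R$, $W$ and $S^2(V)$ survive. Hence
\[
\Hom_{\mf{sp}(V,\omega)}(R_2,R_1) \cong \End_{\mf{sp}(V,\omega)}(\R)^{\oplus 2} \oplus \End_{\mf{sp}(V,\omega)}(W)^{\oplus 2} \oplus \End_{\mf{sp}(V,\omega)}(S^2(V)),
\]
which is the first claimed isomorphism.

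For the second, I would invoke the real form of Schur's lemma (as in \cite{Humphreys1973-ub}): the endomorphism algebra of a finite-dimensional irreducible real representation is a real division algebra, hence one of $\R$, $\C$, $\Hil$, and it equals $\R$ precisely when the representation is of real type (equivalently, stays irreducible after complexification). So it remains to check that each of $\R$, $W$ and $S^2(V)$ is of real type. For the trivial representation $\R$ this is clear. The module $S^2(V)$ is isomorphic, via the identification $V \cong V^\ast$ given by $\omega$, to the adjoint representation $\mf{sp}(V,\omega)$; since $\mf{sp}(V,\omega) \cong \mf{sp}(2n,\R)$ is an absolutely simple real Lie algebra (its complexification $\mf{sp}(2n,\C)$ is simple), its adjoint representation is absolutely irreducible, so $\End_{\mf{sp}(V,\omega)}(S^2(V)) \cong \R$. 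For $W = \text{Ann}(\R\omega) \subset \wedge^2 V$, in dimension $4$ this is the $5$-dimensional fundamental representation of $\mf{sp}(4,\R)$, which carries an invariant nondegenerate symmetric bilinear form — this is the content of the exceptional isomorphism $\mf{sp}(4,\R) \cong \mf{so}(3,2)$ with $W$ the standard representation — and is therefore of real type, so $\End_{\mf{sp}(V,\omega)}(W) \cong \R$. Substituting gives $\R^2 \oplus \R^2 \oplus \R$.

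The only genuinely nontrivial point is this last one: over $\R$, Schur's lemma alone does not pin down the endomorphism algebras, so one must separately rule out the complex and quaternionic types for $W$ and $S^2(V)$. Concretely, this amounts to exhibiting the invariant symmetric pairings above (or equivalently checking that the complexified modules remain irreducible), which uses the explicit structure of the low-dimensional symplectic Lie algebra; I would take the decompositions and the classification of these representations from \cite[Chapter 16]{harris1991representation}. All the remaining steps are formal manipulations of $\Hom$ and direct sums.
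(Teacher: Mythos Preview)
Your proof is correct and follows essentially the same approach as the paper: both obtain the first isomorphism from additivity of $\Hom$ plus Schur's lemma, and the second by checking that $\mb R$, $W$ and $S^2(V)$ remain irreducible after complexification (equivalently, are of real type). The paper simply asserts the absolute irreducibility of the three representations and then uses the isomorphism $\End_{\mf{sp}(V,\omega)}(T)\otimes_{\mb R}\mb C \cong \End_{\mf{sp}(V\otimes_{\mb R}\mb C,\omega)}(T\otimes_{\mb R}\mb C)$, whereas you give more explicit reasons in each case (the adjoint identification for $S^2(V)$, the exceptional isomorphism $\mf{sp}(4,\mb R)\cong\mf{so}(3,2)$ for $W$); note that for $W$ the odd real dimension $5$ already rules out complex and quaternionic type, so the invariant symmetric form is not strictly needed.
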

\begin{proof}
By Schur's lemma, the restriction of a map of representation to irreducible factors is either 0, or an isomorphism, which proves the first isomorphism in the statement. For the second isomorphism, we observe that when complexifying, the representations 
$$
\mb C, W\otimes_{\mb R} \mb C, S^2_{\mb C}(V\otimes_{\mb R} \mb C)
$$
are irreducible $\mf{sp}(V\otimes_{\mb R} \mb C,\omega)$-representations, where $\omega$ is now extended to a $\mb C$-bilinear skew-symmetric map
$$
\omega: V\otimes_{\mb R} \mb C \times V\otimes_{\mb R} \mb C \to \mb C.
$$
Moreover, it is easy to see that for any representation $T$, the natural map
$$
\End_{\mf {sp}(V,\omega)}(T)\otimes_{\mb R} \mb C \to  \End_{\mf {sp}(V\otimes_{\mb R} \mb C,\omega)}(T\otimes_{\mb R} \mb C)
$$
is an isomorphism. As the endomorphism ring of a complex irreducible representation is $\mb C$ by Schur's Lemma, it follows that the endomorphism ring of the real representations $\mb R, W, S^2(V)$ is $\mb R$, concluding the proof of the lemma.
\end{proof}
We explicitly construct the generators of $\Hom_{\mf{sp}(V,\omega)}(R_2,R_1)$: pick a basis $\{e_i\}_{i=1}^4$ of $V$ such that $\omega = e^1\wedge e^3 + e^2 \wedge e^4$, and let 
$$
\pi_\omega = \frac{1}{2}(e_1 \otimes e_3 + e_2 \otimes e_4 - e_3\otimes e_1-e_4 \otimes e_2)\in V\otimes V.
$$
\begin{lem}
Let $\tau \in \wedge^2 V^\ast$. Define $$ \overline{\tau}:= \tau - \frac{1}{2}(\tau(e_{1},e_3)+\tau(e_2,e_4))\omega.$$\\
Let $\tau_1,\tau_2\in \wedge^2 V^\ast$. Then $\Hom_{\mf{sp}(V,\omega)}(R_2,R_1)$ is generated by the maps 
\begin{align*}
    p_1(\tau_1\otimes \tau_2) &= \frac{1}{4}(\tau_1(e_1,e_3)+\tau_1(e_2,e_4))(\tau_2(e_1,e_3)+\tau_2(e_2,e_4))\pi_\omega,\\
    p_2(\tau_1 \otimes \tau_2) &= (\overline{\tau_1}\wedge \overline{\tau_2})(e_1,e_3,e_2,e_4)\pi_\omega,\\
    q_1(\tau_1 \otimes \tau_2) &= ((\omega^{\flat})^{-1}\wedge (\omega^\flat)^{-1})(\overline{\tau_1})\frac{1}{2}(\tau_2(e_1,e_3) + \tau_2(e_2,e_4)),\\
    q_2(\tau_1\otimes \tau_2) & = \frac{1}{2}(\tau_1(e_1,e_3)+\tau_1(e_2,e_4))((\omega^{\flat})^{-1}\wedge (\omega^\flat)^{-1})(\overline{\tau_2}),\\
    s(\tau_1\otimes \tau_2) & = \overline{\tau_1}((\omega^\flat)^{-1}(e^k),e_j)\overline{\tau_2}(e_k,e_l)\omega^{-1}(e^j)\cdot \omega^{-1}(e^l),
\end{align*}
where $p_1,p_2$ correspond to the trivial representation, $q_1,q_2$ to $W$, and $s$ to $S^2(V)$. Here $\cdot $ denotes the symmetric product in $S^2(V)$, $\wedge^3 V^\ast \otimes V$ is identified with $V\otimes V$ via the volume form $\frac{1}{2}\omega \wedge \omega$, and $\wedge^2 V$ and $S^2(V)$ sit inside $V\otimes V$ as 
\begin{align*}
v_1\wedge v_2 &\mapsto \frac{1}{2}(v_1\otimes v_2 - v_2\otimes v_1),\\
v_1 \cdot v_2 & \mapsto \frac{1}{2}(v_1\otimes v_2 + v_2 \otimes v_1).
\end{align*}
\end{lem}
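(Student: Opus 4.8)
The plan is to reduce everything, via the two preceding lemmas, to a linear-independence check for five manifestly equivariant maps. Since the preceding lemma identifies $\Hom_{\mf{sp}(V,\omega)}(R_2,R_1)$ with a five-dimensional space, split as two dimensions for the trivial summand $\R\subset R_1$, two for $W\subset R_1$, and one for $S^2(V)\subset R_1$ (the summand $C$ of $R_2$ contributes nothing, as $C$ does not occur in $R_1$), it suffices to show that $p_1,p_2,q_1,q_2,s$ are (i) $\mf{sp}(V,\omega)$-equivariant and (ii) linearly independent: a five-dimensional space containing five linearly independent elements is spanned by them, and the matching with the three isotypic components falls out of the computation in (i).

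For (i) I would note that each of the five maps is built from $\mf{sp}(V,\omega)$-equivariant operations, so equivariance is automatic once the identifications in the statement are pinned down. Concretely, $\lambda(\tau):=\tfrac12(\tau(e_1,e_3)+\tau(e_2,e_4))$ is the $\omega$-trace of $\tau\in\wedge^2 V^\ast$, hence an $\mf{sp}(V,\omega)$-invariant scalar; $\overline{\tau}=\tau-\lambda(\tau)\omega$ is the equivariant projection onto the primitive part $W^\ast\subset\wedge^2 V^\ast$; the tensor $\pi_\omega\in V\otimes V$ is the image of the invariant bivector $\omega^{-1}\in\wedge^2 V$ under the fixed embedding $\wedge^2 V\hookrightarrow V\otimes V$, hence invariant; and $(\omega^\flat)^{-1}\colon V^\ast\to V$, its exterior powers, and the various wedges, contractions and symmetric products occurring in the formulas are equivariant. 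Using the stated identification $\wedge^3 V^\ast\otimes V\cong V\otimes V$ via the volume form $\tfrac12\omega\wedge\omega$, together with the embeddings of $\wedge^2 V$ and $S^2(V)$ into $V\otimes V$, one checks that $p_1,p_2$ take values in $\R\pi_\omega$, that $q_1,q_2$ take values in $W$, and that $s$ takes values in $S^2(V)$; this establishes equivariance of all five maps and simultaneously pins down which irreducible summand of $R_1$ each one hits.

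For (ii) I would evaluate on well-chosen test elements of $R_2=\wedge^2 V^\ast\otimes\wedge^2 V^\ast$. On $\omega\otimes\omega$ one has $\overline{\omega}=0$ and $\lambda(\omega)=1$, so $p_2,q_1,q_2,s$ all vanish while $p_1$ does not. On $e^1\wedge e^2\otimes e^3\wedge e^4$ both factors are primitive, so $p_1$ vanishes while $p_2$ does not; hence $p_1,p_2$ are independent. On $\sigma\otimes\omega$ with $\sigma\in W^\ast$ a nonzero primitive form, $q_1$ is nonzero (since $(\omega^\flat)^{-1}\wedge(\omega^\flat)^{-1}$ maps $W^\ast$ isomorphically onto $W$) whereas $q_2$ vanishes, and symmetrically on $\omega\otimes\sigma$; hence $q_1,q_2$ are independent and also independent of $p_1,p_2$, which land in $\R\pi_\omega$. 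Finally $s$ takes values in the summand $S^2(V)$, disjoint from $\R\pi_\omega\oplus W$, so it is independent of the other four as soon as $s\neq 0$; I would check $s\neq 0$ by evaluating on, say, $e^1\wedge e^2\otimes e^1\wedge e^2$. This gives linear independence and, together with (i) and the dimension count, the lemma.

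The main obstacle is the bookkeeping in (i): correctly carrying the identification $\wedge^3 V^\ast\otimes V\cong V\otimes V$ through the volume form $\tfrac12\omega\wedge\omega$, the normalisations in $\pi_\omega$, in $(\omega^\flat)^{-1}$ and in the embeddings $\wedge^2 V,S^2(V)\hookrightarrow V\otimes V$, and tracking the numerical coefficients so that the five displayed formulas genuinely are equivariant and genuinely do project onto the three distinct isotypic pieces. The representation theory proper is light — everything is Schur's lemma applied to the decompositions already obtained — so the real work is this sign-and-normalisation check and the selection of the test vectors used in (ii).
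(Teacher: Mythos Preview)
The paper does not give a proof of this lemma; it is stated as an explicit construction immediately after the preceding lemma, which already established that $\Hom_{\mf{sp}(V,\omega)}(R_2,R_1)$ is five-dimensional and decomposes as $\mb R^2\oplus\mb R^2\oplus\mb R$ according to the isotypic pieces $\mb R$, $W$, $S^2(V)$. Your proposal is exactly the verification the paper leaves implicit: check that the five displayed maps are $\mf{sp}(V,\omega)$-equivariant (by exhibiting them as compositions of equivariant operations: the $\omega$-trace $\lambda$, the projection $\tau\mapsto\overline{\tau}$ onto the primitive part, $(\omega^\flat)^{-1}$, the invariant bivector $\pi_\omega$, and the fixed identification $\wedge^3 V^\ast\otimes V\cong V\otimes V$), observe that each lands in a single isotypic summand of $R_1$, and then check linear independence by evaluating on suitable test tensors. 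Together with the dimension count this gives the lemma, and the identification of which maps correspond to which irreducible is immediate from where they take values. Your approach is correct and matches what the paper intends.
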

The lemma above allows us to formulate a condition under which 
\begin{equation}\label{eq:modmap}
\lambda_1p_1 +\lambda_2p_2 + \mu_1 p_1 + \mu_2 p_2 + \nu s 
\end{equation}
is a left inverse of $\phi_3^\omega$:
\begin{lem}
\eqref{eq:modmap} is a left inverse of $\phi^\omega_3$ if and only if 
\begin{align*}
    \lambda_1 &= 2-10\lambda_2,\\
    \mu_1 &= \mu_2-2,\\
    \nu &= -2.
\end{align*}
\end{lem}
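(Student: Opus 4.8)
The plan is to reduce the required identity $r\circ\phi_3^\omega=\mathrm{id}_{R_1}$, where $r$ denotes the map \eqref{eq:modmap}, to three scalar equations by exploiting the isotypic decompositions of $R_1$ and $R_2$ recorded above. Since $\phi_3^\omega$ is an injective $\mf{sp}(V,\omega)$-equivariant map (Lemma \ref{lem:propspo}ii)) and $R_1\cong\mb R\oplus W\oplus S^2(V)$ is a sum of pairwise non-isomorphic irreducibles, $\phi_3^\omega$ carries each of these three summands isomorphically onto a line inside the corresponding isotypic component of $R_2\cong\mb R^{\oplus 2}\oplus W^{\oplus 2}\oplus S^2(V)\oplus C$. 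By Schur's lemma the composite $r\circ\phi_3^\omega$ then acts on each summand $\mb R$, $W$, $S^2(V)$ of $R_1$ by a scalar, and it equals $\mathrm{id}_{R_1}$ if and only if all three scalars are $1$.

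Next I would observe that these three conditions decouple among the five parameters $\lambda_1,\lambda_2,\mu_1,\mu_2,\nu$. By construction $p_1,p_2$ vanish on the $W$-, $S^2(V)$- and $C$-isotypic parts of $R_2$ (the factor $\tau_i(e_1,e_3)+\tau_i(e_2,e_4)$ is, up to scalar, the projection of $\tau_i$ onto $\mb R\omega$), the maps $q_1,q_2$ vanish on everything but the $W$-isotypic part (they are built from the primitive parts $\overline{\tau_i}$ paired against $(\omega^\flat)^{-1}$), and $s$ vanishes on everything but the $S^2(V)$-isotypic part. Hence the scalar by which $r\circ\phi_3^\omega$ acts on $\mb R\subset R_1$ is linear in $\lambda_1,\lambda_2$ alone, the one on $W\subset R_1$ is linear in $\mu_1,\mu_2$ alone, and the one on $S^2(V)\subset R_1$ is linear in $\nu$ alone; each of the three conditions ``$=1$'' is a single linear equation in the indicated parameters.

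It then remains to evaluate the three scalars. For the trivial summand I would take the invariant generator $\pi_\omega\in\wedge^3V^\ast\otimes V$ (using the identification with $V\otimes V$ via $\tfrac12\omega\wedge\omega$), compute $\phi_3^\omega(\pi_\omega)\in\wedge^2V^\ast\otimes\wedge^2V^\ast$ in the adapted basis with $\omega=e^1\wedge e^3+e^2\wedge e^4$, apply $\lambda_1p_1+\lambda_2p_2$, and compare with $\pi_\omega$; for $W$ I would take a primitive bivector generating the copy of $W$ in $R_1$, push it through $\phi_3^\omega$, apply $\mu_1q_1+\mu_2q_2$ and compare; for $S^2(V)$ I would take a generator such as $e_1\cdot e_1$, push it through $\phi_3^\omega$, apply $\nu s$ and compare. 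Setting each result equal to its generator yields exactly the relations $\lambda_1=2-10\lambda_2$, $\mu_1=\mu_2-2$, $\nu=-2$, which is the claim. The main obstacle is purely computational: one must carry out these contractions carefully, tracking the identification $\wedge^3V^\ast\otimes V\cong V\otimes V$ and the various combinatorial normalisation factors entering $p_i$, $q_i$ and $s$; once the decoupling of the previous paragraph is in place there is no conceptual difficulty.
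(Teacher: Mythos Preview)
Your approach is correct and is essentially the only reasonable one: the paper does not spell out a proof of this lemma, treating it as a direct computation, and what you describe---using the isotypic decompositions of $R_1$ and $R_2$ together with Schur's lemma to reduce $r\circ\phi_3^\omega=\mathrm{id}$ to three decoupled scalar equations, then evaluating on a generator of each irreducible summand---is exactly the computation being left implicit. Your justification for the decoupling (that $p_i$, $q_i$, $s$ land in the $\mb R$-, $W$-, $S^2(V)$-summand of $R_1$ respectively, hence by Schur vanish on the other isotypic components of $R_2$) is the right way to organise it.
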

It is now straightforward to show that there is no value of $\lambda_2,\mu_2 \in \mb R$ such that the corresponding map $\widetilde{r_2^\omega} = (2-10\lambda_2)p_1 + \lambda_2 p_2 + (\mu_2-2)q_1 + \mu_2q_2 -2 s$ satisfies
$$
d_3\widetilde{r_2^\omega}\partial_3 = 0.
$$
Consequently:
\begin{prop}\label{prop:nosplitdiff}
When $\dim V = 4$, there exist no $$\widetilde{r_2^\omega}:\Gamma(\wedge^2 V^\ast \otimes \wedge^2 V^\ast) \to \Gamma(\wedge^3 V^\ast \otimes V), \widetilde{r^\omega_3}:\Gamma(\wedge^3 V^\ast \otimes \wedge^2 V^\ast) \to\Gamma( \wedge^4 V^\ast \otimes V)$$ satisfying $$\widetilde{r_2^\omega}\partial_3 + d_4\widetilde{r_3^\omega} = 0.$$
\end{prop}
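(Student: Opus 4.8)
The plan is to deduce the Proposition from the lemmas already established in this appendix by a short diagram chase, preceded by two routine reductions. Concretely, the preceding lemmas provide the full $\mf{sp}(V,\omega)$-equivariant picture in dimension $4$: the decomposition of $R_1=\wedge^3V^\ast\otimes V$ and $R_2=\wedge^2V^\ast\otimes\wedge^2V^\ast$ into irreducibles, the $5$-dimensional space $\Hom_{\mf{sp}(V,\omega)}(R_2,R_1)$ with generators $p_1,p_2,q_1,q_2,s$, the affine conditions picking out the left inverses of $\phi_3^\omega$, and the final computation showing that none of the resulting $2$-parameter family of equivariant left inverses $\widetilde{r_2^\omega}=(2-10\lambda_2)p_1+\lambda_2 p_2+(\mu_2-2)q_1+\mu_2 q_2-2s$ satisfies $d_3\widetilde{r_2^\omega}\partial_3=0$. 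The task is to turn this into the stated non-existence of the pair $(\widetilde{r_2^\omega},\widetilde{r_3^\omega})$.

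First I would reduce to $\widetilde{r_2^\omega},\widetilde{r_3^\omega}$ that are constant along the fibres and $\mf{sp}(V,\omega)$-equivariant. For constancy: evaluating a hypothetical identity $\widetilde{r_2^\omega}\partial_3+d_4\widetilde{r_3^\omega}=0$ on a constant section $\alpha\otimes\tau$ and extracting the first-order Taylor part at the origin — using that $\partial_3$ and $d_4$ have linear coefficient functions — gives the same identity for the constant extensions of the values $\widetilde{r_2^\omega}(0),\widetilde{r_3^\omega}(0)$, which remain left inverses of $\phi_3^\omega$ and $\phi_4^\omega$; this is the same device used in the general-dimension discussion above. For equivariance: since $\mf{sp}(V,\omega)$ is semisimple, the space of $\mb R$-linear maps $R_2\to R_1$ splits $\mf{sp}(V,\omega)$-equivariantly, and the projection onto the trivial isotypic component (the invariants, i.e.\ $\Hom_{\mf{sp}(V,\omega)}(R_2,R_1)$) commutes with every equivariant operation; as $\phi_3^\omega$, $d_3$, $\partial_3$ and $\mathrm{id}_{R_1}$ are all equivariant, this projection carries a left inverse of $\phi_3^\omega$ satisfying $d_3\widetilde{r_2^\omega}\partial_3=0$ to an equivariant one with the same two properties.

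For the main argument, suppose for contradiction that (equivariant, constant) left inverses $\widetilde{r_2^\omega}$ of $\phi_3^\omega$ and $\widetilde{r_3^\omega}$ of $\phi_4^\omega$ satisfy $\widetilde{r_2^\omega}\partial_3+d_4\widetilde{r_3^\omega}=0$. Composing on the left with $d_3\colon\Gamma(\wedge^3V^\ast\otimes V)\to\Gamma(\wedge^2V^\ast\otimes V)$ and using that $d_\bullet$ is a differential, so $d_3 d_4=0$, yields $d_3\widetilde{r_2^\omega}\partial_3=0$. This contradicts the concluding computation of the appendix, which shows that no equivariant left inverse of $\phi_3^\omega$ has this property. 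Hence no such pair exists, which is the Proposition. (Note the argument never uses that $\widetilde{r_3^\omega}$ is itself a left inverse, only that $d_3 d_4\widetilde{r_3^\omega}=0$; the content of the statement is really about $\widetilde{r_2^\omega}$.)

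All the genuine labour sits in the lemmas preceding the statement rather than in this final step. If one had to redo it from scratch, the expected obstacle is organizing that last evaluation of $d_3\widetilde{r_2^\omega}\partial_3$ on $\wedge^3V^\ast\otimes\wedge^2V^\ast$: one should pick test elements so that the obstruction is read off as, say, two affine forms in $(\lambda_2,\mu_2)$ with empty common zero locus, which makes the non-vanishing transparent; the passage from that fact to the Proposition is then the two-line chase above.
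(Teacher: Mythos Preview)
Your argument is correct and follows the same route the paper takes: compose the hypothetical identity with $d_3$ on the left, use $d_3d_4=0$, and reduce to the non-existence of an equivariant left inverse $\widetilde{r_2^\omega}$ with $d_3\widetilde{r_2^\omega}\partial_3=0$, which is exactly what the preceding lemmas establish. Your proof is in fact more complete than the paper's, which only treats the $\mf{sp}(V,\omega)$-equivariant case (calling the assumption ``natural'') and then writes ``Consequently''; your Reynolds-type projection onto the invariant part of $\Hom(R_2,R_1)$ supplies the missing reduction from arbitrary constant left inverses to equivariant ones.
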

\bibliography{bib.bib}{}

\begin{thebibliography}{LGLS20}

\bibitem[AS09]{holgpd}
Iakovos Androulidakis and Georges Skandalis.
\newblock The holonomy groupoid of a singular foliation.
\newblock {\em J. Reine Angew. Math.}, 626:1--37, 2009.

\bibitem[AZ14]{holtraf}
Iakovos Androulidakis and Marco Zambon.
\newblock Holonomy transformations for singular foliations.
\newblock {\em Advances in Mathematics}, 256:348--397, 2014.

\bibitem[AZ16]{sheafcomp}
Iakovos Androulidakis and Marco Zambon.
\newblock {Stefan–Sussmann singular foliations, singular subalgebroids and
  their associated sheaves}.
\newblock {\em International Journal of Geometric Methods in Modern Physics},
  04 2016.

\bibitem[FH91]{harris1991representation}
W.~Fulton and W.F.J. Harris.
\newblock {\em Representation Theory: A First Course}.
\newblock Graduate Texts in Mathematics. Springer New York, 1991.

\bibitem[GS68]{Guillemin1968RemarksOA}
Victor~W. Guillemin and Shlomo Sternberg.
\newblock {Remarks on a paper of Hermann}.
\newblock {\em Transactions of the American Mathematical Society},
  130:110--116, 1968.

\bibitem[Hum73]{Humphreys1973-ub}
James~E. Humphreys.
\newblock {\em {Introduction to Lie algebras and representation theory}}.
\newblock Graduate Texts in Mathematics. Springer, New York, NY, January 1973.

\bibitem[KSM02]{cdoksmk}
Y.~Kosmann-Schwarzbach and K.~C.~H. Mackenzie.
\newblock Differential operators and actions of {Lie} algebroids.
\newblock In {\em Quantization, Poisson brackets and beyond. London
  Mathematical Society regional meeting and workshop on quantization,
  deformations, and new homological and categorical methods in mathematical
  physics, Manchester, UK, July 6--13, 2001}, pages 213--233. Providence, RI:
  American Mathematical Society (AMS), 2002.

\bibitem[Lav16]{lavau2016}
Sylvain Lavau.
\newblock {\em Lie infini-algébroides et feuilletages singuliers}.
\newblock PhD thesis, 2016.
\newblock Thèse de doctorat dirigée par Strobl, Thomas et Samtleben, Henning
  Mathématiques Lyon 2016.

\bibitem[Lav22]{modclasslavau}
Sylvain Lavau.
\newblock {The modular class of a singular foliation}.
\newblock {\em arXiv e-prints}, page arXiv:2203.10861, March 2022.

\bibitem[LGLS20]{univlinfty}
Camille Laurent-Gengoux, Sylvain Lavau, and Thomas Strobl.
\newblock {The universal Lie $\infty$-algebroid of a singular foliation}.
\newblock {\em Doc. Math.}, 25:1571--1652, 2020.

\bibitem[LS93]{shlie}
Tom Lada and Jim Stasheff.
\newblock Introduction to sh {Lie} algebras for physicists.
\newblock {\em Int. J. Theor. Phys.}, 32(7):1087--1103, 1993.

\bibitem[Sus73]{sussmann}
Hector~J. Sussmann.
\newblock Orbits of families of vector fields and integrability of systems with
  singularities.
\newblock {\em Bull. Am. Math. Soc.}, 79:197--199, 1973.

\bibitem[Vor10]{voronov}
Th. Voronov.
\newblock {$Q$-manifolds and higher analogs of {Lie} algebroids}.
\newblock In {\em XXIX workshop on geometric methods in physics,
  Bia{\l}owie\.za, Poland, June 27 -- July 3, 2010. Selected papers based on
  the presentations at the workshop}, pages 191--202. Melville, NY: American
  Institute of Physics (AIP), 2010.

\bibitem[{Wan}]{wang}
{Wang, Roy}.
\newblock {\em {On Integrable Systems \& Rigidity for PDEs with Symmetry}}.
\newblock PhD thesis.

\bibitem[Wei95]{weibelhomalg}
C.A. Weibel.
\newblock {\em {An Introduction to Homological Algebra}}.
\newblock Cambridge Studies in Advanced Mathematics. Cambridge University
  Press, 1995.

\end{thebibliography}
\bibliographystyle{alpha}
\Addresses

\end{document}